\documentclass[11pt,reqno]{amsart}

\usepackage[margin=1.15in]{geometry}
\usepackage{amsmath,amssymb,amsthm,mathtools}
\usepackage{enumitem}
\usepackage[colorlinks=true,linkcolor=blue!55!black,citecolor=blue!55!black,urlcolor=blue!55!black]{hyperref}
\usepackage{tikz}

\theoremstyle{plain}
\newtheorem{theorem}{Theorem}[section]

\newtheorem{lemma}[theorem]{Lemma}
\newtheorem{corollary}[theorem]{Corollary}
\theoremstyle{definition}
\newtheorem{definition}[theorem]{Definition}
\newtheorem{example}[theorem]{Example}
\newtheorem{remark}[theorem]{Remark}
\newtheorem{hypothesis}[theorem]{Hypothesis}
\newtheorem{question}[theorem]{Question}

\newcommand{\Sgm}{\operatorname{Sgm}}
\newcommand{\Oz}{\mathcal{O}_{\mathbb{Z}}}
\newcommand{\bb}{\mathbf{b}}
\newcommand{\ee}{\mathbf{e}}
\newcommand{\supp}{\operatorname{supp}}
\newcommand{\Z}{\mathbb{Z}}
\newcommand{\Zpos}{\Z_{\geq 0}}
\newcommand{\dv}{{}^{\vee}}
\DeclareMathOperator{\pr}{pr}

\begin{document}
\title[Duality, rigidity, and peeling for multisegments]
{Duality, rigidity, and peeling for multisegments:\\ on hypotheses of Mitra, Offen, and Sayag}
\author{Hariom Sharma}
\email{hariomshrma97@gmail.com, hariom@math.iitb.ac.in}
\address{Department of Mathematics, Indian Institute of Technology, Bombay, Powai, Mumbai, Maharashtra 400076, India}

\subjclass[2020]{05A30 (primary); 22E35, 22E50 (secondary)}
\keywords{Multisegments, distinguished multisegments, relevant decompositions, Speh type, standard modules, Klyachko periods}
\begin{abstract}
    Mitra, Offen, and Sayag introduced the notions of distinguished multisegments and multisegments of Speh type, and formulated the hypothesis that every distinguished multisegment is of Speh type \cite[Hypothesis~8.5]{MOS}. They also proposed the related hypothesis that a multisegment which is distinguished together with its dual is of Speh type \cite[Hypothesis~8.6]{MOS}. They proved these statements for sets of segments and for multisegments in which at most two segments share a given endpoint.
In this article, we develop a systematic combinatorial theory of relevant decompositions and establish several structural properties of distinguished multisegments. We prove that relevance is preserved under the involution $\Delta\mapsto\Delta\dv$ together with reversal of the standard order. As a consequence, a multisegment $\mathfrak m$ is distinguished if and only if its dual $\mathfrak m\dv$ is distinguished, and Hypotheses~8.5 and~8.6 of \cite{MOS} are equivalent. We introduce the numerical invariant
$S_{\mathfrak m}(\Delta) = \sum_{t\geq0}(-1)^t\mathfrak m(\nu^t\Delta)$,
and prove that $\mathfrak m$ is of Speh type if and only if
$S_{\mathfrak m}(\Delta)\geq0$ for every segment $\Delta$.
Using the rigidity properties of relevant decompositions, we obtain a peeling theorem that controls the extremal layers of a distinguished multisegment. In particular, if $\mathfrak m$ is distinguished, then the top endpoint layer is supported by the preceding layer, namely $\nu^{-1}\mathfrak m_{c}\leq \mathfrak m_{c-1}$,
where $c$ is the largest endpoint occurring in $\mathfrak m$, together with the corresponding dual inequality at the smallest beginning. These constraints lead to a numerical tameness condition, and we prove both hypotheses of Mitra--Offen--Sayag for every tame multisegment. The class of multisegments for which $\mathfrak m$ or its dual
$\mathfrak m^\vee$ is tame strictly extends the classes previously
treated by Mitra--Offen--Sayag.
Finally, we construct a five-segment multisegment which is not distinguished, but for which every standard order with non-increasing endpoints, including the canonical order of \cite[\S8.0.12]{MOS}, admits a non-trivial relevant decomposition. Thus, the witness strategy based only on such orders cannot establish Hypothesis~8.5 in full generality. 
The general case remains open, but our results reduce it to the class of
multisegments $\mathfrak m$ for which both $\mathfrak m$ and its dual
$\mathfrak m^\vee$ are non-tame. In addition, we establish structural
constraints on the standard orders that may serve as witnesses for
non-distinction.
\end{abstract}
\maketitle
\section{Introduction}
Let $\mathrm{F}$ be a $p$-adic field. A central problem in the relative Langlands program is to determine which irreducible representations of a reductive group admit a period with respect to a prescribed subgroup. Such periods detect additional symmetries of representations and often reflect distinguished features of their Langlands parameters. For the Klyachko subgroups of $\mathrm{GL}_n(\mathrm{F})$, these questions were studied
extensively by Offen--Sayag \cite{OS2} and, in the case of ladder
representations, by Mitra, Offen, and Sayag \cite{MOS}.

Using the geometric lemma of Bernstein--Zelevinsky \cite{BZ} to analyze the standard modules $\lambda(\mathfrak{m})$ associated with multisets $\mathfrak{m}$ of Zelevinsky segments \cite{Z}, the authors reduced a necessary condition for symplectic distinction of standard modules to a purely combinatorial problem, formulated independently in \cite[Section $8$]{MOS}. Given a standard order on a multisegment of integer segments, they introduced the notion of a \emph{relevant decomposition}. A multisegment for which every standard order admits a relevant decomposition is called \emph{distinguished}. They also defined a multiset $\mathfrak{m}$ to be \emph{of Speh type} if
$$\mathfrak{m}=\mathfrak{n}+\nu\mathfrak{n}$$
for some multiset $\mathfrak{n}$, where 
$$\nu([a,b])=[a+1,b+1].$$
Every multisegment of Speh type is distinguished. Mitra, Offen, and Sayag proposed that the converse should also hold.
\begin{hypothesis}[{\cite[Hypothesis~8.5]{MOS}}]\label{hyp:85}
If $\mathfrak m$ is distinguished, then $\mathfrak m$ is of Speh type.
\end{hypothesis}

They also formulated the following apparently weaker statement.

\begin{hypothesis}[{\cite[Hypothesis~8.6]{MOS}}]\label{hyp:86}
If both $\mathfrak m$ and $\mathfrak m\dv$ are distinguished, then
$\mathfrak m$ is of Speh type.
\end{hypothesis}

Here, $\mathfrak m\dv$ is obtained by reflecting each segment:
$$[a,b]\dv=[-b,-a].$$
Since the Speh type condition is preserved by this duality, a multisegment of Speh type has both $\mathfrak m$ and $\mathfrak m\dv$ distinguished. Thus, Hypothesis~\ref{hyp:85}, if valid in general, would give the precise combinatorial characterization
$$\mathfrak m\text{ is distinguished}
\quad\Longleftrightarrow\quad
\mathfrak m\text{ is of Speh type},$$
whereas Hypothesis~\ref{hyp:86} would characterize the simultaneous distinguishedness of $\mathfrak m$ and $\mathfrak m\dv$. We discuss the different converse statements involved in \S\ref{ss:converses}, including an order-level converse that turns out to be false.
The authors of \cite{MOS} deliberately state these as hypotheses rather than conjectures and prove Hypothesis~\ref{hyp:85} in two important cases: when $\mathfrak{m}$ is a set and when no endpoint occurs in more than two segments. In both cases, they show that a canonical standard order admits no non-trivial relevant decomposition, following the strategy proposed in \cite[\S8.0.10]{MOS}. 

The purpose of this article is to advance the combinatorial theory behind these hypotheses.
Our contributions are threefold. First, we establish structural results, including duality, numerical, and rigidity properties, that apply to arbitrary multisegments. Second, we prove both hypotheses for a new class of multisegments that strictly extends the cases previously treated by Mitra--Offen--Sayag. Third, we exhibit an obstruction showing that the strategy based exclusively on standard orders with non-increasing endpoints cannot establish Hypothesis~\ref{hyp:85} in full generality. Throughout, $\mathfrak{m}[1],\mathfrak m[2],\dots$ denotes the decomposition of $\mathfrak{m}$ according to the
(strictly decreasing) endpoint values occurring in it, as in \cite[\S8.0.11]{MOS}; we also write
$\mathfrak{m}_{x}$ for the sub-multiset of segments with endpoint $x$ and $\mathfrak{m}^{x}$ for the
sub-multiset of segments with beginning $x$. 
We refer the reader to Section~\ref{sec:setting} for notation and terminology used below without further definition.

Our first main theorem removes the distinction between the two hypotheses.

\begin{theorem}[Duality; Theorem~\ref{thm:duality} and Corollary~\ref{cor:85iff86}]\label{thm:A}
Let $\mathfrak{m}\in\Oz$.  A standard order of $\mathfrak{m}$ admits a relevant decomposition if and only if the
reversed dual order of $\mathfrak{m}\dv$ does.  Consequently, $\mathfrak{m}$ is distinguished if and only if
$\mathfrak{m}\dv$ is distinguished, and Hypotheses~\ref{hyp:85} and~\ref{hyp:86} are equivalent.
\end{theorem}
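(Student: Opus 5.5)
The plan is to build an explicit bijection between relevant decompositions of a standard order of $\mathfrak m$ and relevant decompositions of the reversed dual order of $\mathfrak m\dv$. The bijection will be a pure relabelling: dualize each piece and reverse the indices. Recall from Section~\ref{sec:setting} the shape of the data. A standard order $\Delta_1,\dots,\Delta_k$ of $\mathfrak m$ is given. A decomposition writes each $\Delta_i$ as a concatenation of (possibly empty) subsegments $\Delta_{i,1},\dots,\Delta_{i,k}$, arranged along $\Delta_i$ in the order prescribed by the geometric lemma. Relevance then imposes the pairing condition relating $\Delta_{i,j}$ and $\Delta_{j,i}$ through $\nu$ for $i<j$, together with the condition on the diagonal pieces $\Delta_{i,i}$.

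\textbf{Step 1: elementary identities for $\dv$.} I will first record the following, all immediate from $[a,b]\dv=[-b,-a]$:
\begin{itemize}
\item $(\nu\Delta)\dv=\nu^{-1}\Delta\dv$;
\item $\dv$ is an involution;
\item $\dv$ sends a concatenation $\Delta=\Delta'\sqcup\Delta''$ ($\Delta'$ below $\Delta''$) to the concatenation $\Delta''^{\vee}\sqcup\Delta'^{\vee}$, so the linear order of pieces inside a segment is reversed;
\item $\dv$ reverses the relations ``precedes'' and ``linked'' in the sense used to define standard orders: if $\Delta$ precedes $\Delta'$ then $\Delta'^{\vee}$ precedes $\Delta\dv$.
\end{itemize}
From the last identity, if $\Delta_1,\dots,\Delta_k$ is a standard order of $\mathfrak m$, then $\Delta_k\dv,\dots,\Delta_1\dv$ is a standard order of $\mathfrak m\dv$. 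This is the reversed dual order, and the map is an involution on pairs (multisegment, standard order).

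\textbf{Step 2: transport a relevant decomposition.} Given $\{\Delta_{i,j}\}$ for the order $(\Delta_i)$, set $i^*=k+1-i$ and define
$$\Delta'_{i^*,j^*}:=\Delta_{i,j}\dv$$
for the order $(\Delta'_{i^*})=(\Delta_i\dv)$. I will check three things.
\begin{enumerate}
\item $\Delta'_{i^*}$ is the concatenation of the $\Delta'_{i^*,j^*}$ in the order required by the geometric lemma. Dualizing reverses the arrangement within each segment, and reindexing $j\mapsto j^*$ reverses the index order. These two reversals compensate, so the prescribed arrangement is recovered.
\item The pairing condition transfers. For $i<j$ we have $j^*<i^*$, so the roles of the two pieces are swapped. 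The relation $\Delta_{j,i}=\nu\Delta_{i,j}$ (or whichever normalization the definition uses) dualizes to $\Delta'_{j^*,i^*}=\nu^{-1}\Delta'_{i^*,j^*}$, that is, $\Delta'_{i^*,j^*}=\nu\Delta'_{j^*,i^*}$ with now $j^*<i^*$. This is exactly the required form.
\item The diagonal conditions and the non-triviality (or triviality) are preserved. Both are symmetric under $\dv$, since emptiness and the cardinality of pieces are invariant.
\end{enumerate}
Applying the same construction to $\mathfrak m\dv$ returns the original data, so this gives a bijection and hence the ``if and only if''.

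\textbf{Step 3: consequences.} As the order ranges over all standard orders of $\mathfrak m$, the reversed dual order ranges over all standard orders of $\mathfrak m\dv$. Hence $\mathfrak m$ is distinguished iff $\mathfrak m\dv$ is. For the hypotheses, \ref{hyp:85} clearly implies \ref{hyp:86}. Conversely, assume \ref{hyp:86} and let $\mathfrak m$ be distinguished. Then $\mathfrak m\dv$ is also distinguished, so \ref{hyp:86} applies and $\mathfrak m$ is of Speh type.

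\textbf{Main obstacle.} I expect the difficulty to lie in Step 2(1): matching precisely the convention of how the pieces $\Delta_{i,j}$ are arranged inside $\Delta_i$. This arrangement comes from the geometric lemma and may depend on $j$ relative to $i$, for instance with pieces for $j<i$ and $j>i$ on different sides. I would first write that convention out explicitly and verify that reversal of the segment combined with $j\mapsto k+1-j$ maps it to itself. If the convention is asymmetric, for example pieces $j<i$ go to the top and $j>i$ to the bottom, then the reversal $i\mapsto i^*$ exchanges these two classes. Dualizing exchanges top and bottom, so the two effects should cancel, and I would confirm this case by case.
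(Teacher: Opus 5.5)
Your overall strategy is exactly the paper's: dualize every piece, reverse the block order and the order of pieces inside each block, and transport the pairing. Steps~1 and~3 are fine; they are Lemma~\ref{lem:dualprec}, Theorem~\ref{thm:duality}(a) and Corollary~\ref{cor:85iff86}.

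The gap is that Step~2 is carried out for a notion of relevant decomposition that is not Definition~\ref{def:relevant}. That definition has no possibly empty pieces $\Delta_{i,1},\dots,\Delta_{i,k}$ indexed by a second block index, no pairing of $\Delta_{i,j}$ with $\Delta_{j,i}$, and no diagonal condition. Instead, each $\Delta_i$ is cut into an arbitrary number $k_i$ of non-empty pieces listed by decreasing endpoints. Relevance is the existence of an involution $\tau$ of the index set $I$ satisfying (R1)--(R3). As written, you never construct the involution for $\mathcal{O}\dv$ and never check (R1), which is the only condition with real content. Your floated conventions are also reversed relative to the paper. Here (R3) makes the \emph{earlier} piece equal to $\nu$ of the later one, and (R1) pairs the top piece with the \emph{largest} block index. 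This is harmless, since both orientations are stable under the double reversal, but it shows the argument is not yet anchored to the actual definition.

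To close the gap, put $\psi(i,j)=(k+1-i,\,k_i+1-j)$. By Lemma~\ref{lem:dualdec}, the pieces $\widetilde{\Delta}_{\psi(i,j)}=\Delta_{i,j}\dv$ form a decomposition of $\mathcal{O}\dv$. Set $\tau\dv=\psi\circ\tau\circ\psi^{-1}$; then (R2) is immediate.
\begin{itemize}
\item (R1) for $\tau\dv$ at the dual block $k+1-i$ and position $j'$ reads $\varphi(i,u)<\varphi(i,u-1)$ with $u=k_i+1-j'$, where $\varphi=\pr_1\circ\tau$. This is where your ``two reversals compensate'' argument actually belongs; the arrangement of the dual pieces is automatic from Lemma~\ref{lem:dualdec}.
\item (R3) follows because $\psi$ reverses $\prec_{\mathrm{lex}}$. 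For $\imath=\psi(x)$, the condition $\imath\prec_{\mathrm{lex}}\tau\dv(\imath)$ is equivalent to $\tau(x)\prec_{\mathrm{lex}}x$, and then $(\nu\Delta)\dv=\nu^{-1}\Delta\dv$ gives the required identity. This is your Step~2(2).
\end{itemize}

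Alternatively, you may keep your matrix picture, provided you first prove it equivalent to Definition~\ref{def:relevant}. Let $D_{i,i'}$ be the piece of $\Delta_i$ that $\tau$ pairs with block $i'$, and $\emptyset$ if there is none; this is well defined because (R1) makes $\varphi$ injective on each block. Then:
\begin{itemize}
\item $D_{i,i}=\emptyset$ by Lemma~\ref{lem:blocks};
\item $D_{i,i'}\neq\emptyset$ if and only if $D_{i',i}\neq\emptyset$;
\item $D_{i,i'}=\nu D_{i',i}$ for $i<i'$;
\item $\Delta_i$ is the concatenation of its non-empty $D_{i,i'}$ with $i'$ decreasing from top to bottom.
\end{itemize}
Conversely, any such matrix defines a $\tau$ satisfying (R1)--(R3). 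Under this dictionary, your arrangement condition is (R1), your pairing condition is (R3), and your diagonal condition is (R2). Your Step~2 then becomes literally the paper's proof.
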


Thus, the additional condition in Hypothesis~\ref{hyp:86}, namely that
$\mathfrak{m}^{\vee}$ is distinguished, is automatically satisfied whenever
$\mathfrak{m}$ is distinguished. Consequently, Hypothesis~\ref{hyp:86}
holds unconditionally for all sets of segments, as well as for all
multisegments satisfying $\lvert \mathfrak{m}[i]\rvert\leq 2$ for every $i$.
By duality, Hypothesis~\ref{hyp:85} holds whenever at most two segments of
$\mathfrak{m}$ have the same beginning; see Corollary~\ref{cor:dual88}.
Thus, the duality theorem transfers results involving multiplicities of
endpoints to corresponding results involving multiplicities of beginnings.

Our second main result provides a numerical criterion for a multisegment 
to be of Speh type and serves as the basis for the subsequent analysis.

\begin{theorem}[Theorem~\ref{thm:speh}]\label{thm:B}
For each $\Delta\in\Sgm$, define
$S_{\mathfrak{m}}(\Delta)
    =
    \sum_{t\geq 0}(-1)^t\,
    \mathfrak{m}\bigl(\nu^t\Delta\bigr)$.
Then $\mathfrak{m}$ is of Speh type if and only if $ S_{\mathfrak{m}}(\Delta)\geq 0$
for every segment $\Delta$.
\end{theorem}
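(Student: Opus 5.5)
The plan is to solve the equation $\mathfrak m=\mathfrak n+\nu\mathfrak n$ explicitly, one $\nu$-orbit of segments at a time, and to show that the only possible candidate for $\mathfrak n$ is $S_{\mathfrak m}$ itself. Throughout, a multisegment is viewed as a finitely supported function $\Sgm\to\Zpos$. The operator $\nu$ acts freely on $\Sgm$, and each orbit $\{\nu^t\Delta_0: t\in\Z\}$ is indexed by $\Z$. Since $\mathfrak m$ has finite support, every sum $S_{\mathfrak m}(\Delta)=\sum_{t\ge0}(-1)^t\mathfrak m(\nu^t\Delta)$ is finite, and $S_{\mathfrak m}(\Delta)=0$ once $\Delta$ lies beyond the support of $\mathfrak m$ in its orbit (all $\nu^t\Delta$ with $t\ge0$ outside the support).

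\emph{Step 1: the recursion.} Directly from the definition,
\[
S_{\mathfrak m}(\Delta)+S_{\mathfrak m}(\nu\Delta)=\mathfrak m(\Delta)\qquad\text{for every }\Delta\in\Sgm .
\]
Note also that $(\nu\mathfrak n)(\Delta)=\mathfrak n(\nu^{-1}\Delta)$, so $\mathfrak m=\mathfrak n+\nu\mathfrak n$ means $\mathfrak m(\Delta)=\mathfrak n(\Delta)+\mathfrak n(\nu^{-1}\Delta)$ for all $\Delta$.

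\emph{Step 2: the formal solution is forced.} Suppose $\mathfrak m=\mathfrak n+\nu\mathfrak n$, and set $f(\Delta)=\mathfrak n(\nu^{-1}\Delta)$. Then $f(\Delta)+f(\nu\Delta)=\mathfrak m(\Delta)$, so $f$ satisfies the same recursion as $S_{\mathfrak m}$. Both $f$ and $S_{\mathfrak m}$ vanish far to the right in each orbit. Running the recursion downward from such a point, via $g(\Delta)=\mathfrak m(\Delta)-g(\nu\Delta)$, shows $f=S_{\mathfrak m}$, that is,
\[
\mathfrak n(\Delta)=S_{\mathfrak m}(\nu\Delta).
\]
Since $\mathfrak n\ge0$ and $\nu$ is a bijection of $\Sgm$, this gives $S_{\mathfrak m}\ge0$ everywhere. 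This proves the forward implication.

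\emph{Step 3: the converse.} Assume $S_{\mathfrak m}\ge0$ and define $\mathfrak n(\Delta)=S_{\mathfrak m}(\nu\Delta)$. We need $\mathfrak n$ to be finitely supported. To the right of the support of $\mathfrak m$ in an orbit, $S_{\mathfrak m}$ vanishes as noted. To the left, once $\Delta$ lies before the support, $S_{\mathfrak m}(\Delta)=\pm\sum_{t\in\Z}(-1)^t\mathfrak m(\nu^t\Delta_0)$ is a constant alternating total for that orbit; it must be killed by a separate parity argument. Grant that $S_{\mathfrak m}(\nu^{-k}\Delta)=(-1)^kS_{\mathfrak m}(\Delta)$ for $\Delta$ below the support. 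This is the recursion with $\mathfrak m=0$ there. Nonnegativity then forces this alternating value to be $0$. Hence $\mathfrak n$ is finitely supported and nonnegative, so $\mathfrak n\in\Oz$. Finally, Step 1 gives $\mathfrak n(\Delta)+\mathfrak n(\nu^{-1}\Delta)=S_{\mathfrak m}(\nu\Delta)+S_{\mathfrak m}(\Delta)=\mathfrak m(\Delta)$, so $\mathfrak m=\mathfrak n+\nu\mathfrak n$.

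The only delicate point is this finiteness at the left end of each orbit. It hinges on the sign-alternation observation: a nonzero alternating total would make $S_{\mathfrak m}$ negative at some segment below the support. Everything else is bookkeeping.
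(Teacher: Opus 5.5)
Your proof is correct and follows the paper's argument: for the converse you define $\mathfrak n(\Delta)=S_{\mathfrak m}(\nu\Delta)$, verify $\mathfrak m=\mathfrak n+\nu\mathfrak n$ via the recursion $S_{\mathfrak m}(\Delta)+S_{\mathfrak m}(\nu\Delta)=\mathfrak m(\Delta)$, and obtain finite support at the left end of each orbit as the paper does, since $S_{\mathfrak m}(\nu^{-1}\Delta)=-S_{\mathfrak m}(\Delta)$ below the support and both values are nonnegative. The only variation is in the forward direction, where you derive $S_{\mathfrak m}(\Delta)=\mathfrak n(\nu^{-1}\Delta)$ from uniqueness of right-vanishing solutions of the recursion instead of the paper's telescoping computation; both give the same identity.
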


The integers $S_{\mathfrak{m}}(\Delta)$ are precisely the multiplicities 
obtained by successively peeling $\mathfrak{m}$ from the top. More 
explicitly, one must first match the segments having the largest endpoint 
with their $\nu^{-1}$-translates and then repeat this procedure with the 
remaining segments.

Our third main result shows that a distinguished multisegment necessarily 
survives the first step of the peeling procedure. It also introduces a 
mechanism for carrying out the peeling within a single standard order, 
namely, a family of standard orders that we call \emph{admissible}.

\begin{theorem}[Peeling; Theorem~\ref{thm:peel} and 
Corollary~\ref{cor:firstlevel}]
\label{thm:C}
Let $\mathfrak{m}\neq 0$ be a distinguished multisegment, and let $c$ be 
the largest endpoint occurring among the segments of $\mathfrak{m}$. Then
$$\nu^{-1}\mathfrak{m}_{c}\leq \mathfrak{m}_{c-1}.$$
Equivalently, $S_{\mathfrak{m}}(\Delta)\geq 0$
for every segment $\Delta$ satisfying $\ee(\Delta)=c-1$. Dually, if $a$ 
is the smallest beginning occurring among the segments of $\mathfrak{m}$, 
then
$$ \nu\mathfrak{m}^{a}\leq \mathfrak{m}^{a+1}.$$
\end{theorem}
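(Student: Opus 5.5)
The plan is to fix one well-chosen standard order of $\mathfrak m$ and to use the fact that it admits a relevant decomposition, since $\mathfrak m$ is distinguished. The rigidity of relevant decompositions near the top endpoint should then force an injection from $\nu^{-1}\mathfrak m_c$ into $\mathfrak m_{c-1}$. The dual inequality will come for free from Theorem~\ref{thm:A}. The equivalence with the $S$-formulation is pure bookkeeping: if $\ee(\Delta)=c-1$, then $\nu^t\Delta$ has endpoint $c-1+t>c$ for $t\ge 2$, so
\[
S_{\mathfrak m}(\Delta)=\mathfrak m(\Delta)-\mathfrak m(\nu\Delta)=\mathfrak m_{c-1}(\Delta)-(\nu^{-1}\mathfrak m_c)(\Delta).
\]
Hence $S_{\mathfrak m}(\Delta)\ge 0$ for all such $\Delta$ is literally the coefficientwise inequality $\nu^{-1}\mathfrak m_c\le\mathfrak m_{c-1}$.

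\textbf{Step 1 (choice of order).} I will use an admissible standard order with non-increasing endpoints. In it the layer $\mathfrak m_c=\mathfrak m[1]$ comes first and $\mathfrak m_{c-1}$ immediately after (if $c-1$ occurs; if it does not, the argument below should produce a contradiction unless $\mathfrak m_c=0$). Within each layer, segments are arranged by beginnings in the order that the rigidity lemmas prefer; my first attempt is to put the longest segments first.

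\textbf{Step 2 (top pieces).} Take a relevant decomposition of this order. Every segment $\Delta\in\mathfrak m_c$ contributes a piece containing $c$, say $[x,c]$. That piece cannot be the $\nu^{-1}$-partner of anything, since no piece ends at $c+1$. So it must be matched as $\nu$ of a piece $[x-1,c-1]$. Such a piece is the top piece either of a segment of $\mathfrak m_{c-1}$ or of a cut segment of $\mathfrak m_c$. Using the ordering constraints of the relevant decomposition relative to the chosen order, together with the rigidity statements proved earlier, I will rule out two things:
\begin{itemize}
\item matching inside the layer $\mathfrak m_c$;
\item the top layer being cut nontrivially.
\end{itemize}
I would do this by induction down the order: the first segment of $\mathfrak m_c$ has nothing earlier to pair with, and so on. The conclusion should be that each $\Delta\in\mathfrak m_c$ appears whole and is paired with a whole segment $\nu^{-1}\Delta\in\mathfrak m_{c-1}$.

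\textbf{Step 3 (counting).} Since the matching is a bijection on pieces, distinct copies in $\mathfrak m_c$ get distinct partners. This gives $\nu^{-1}\mathfrak m_c\le\mathfrak m_{c-1}$.

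\textbf{Step 4 (dual inequality).} Apply the first inequality to $\mathfrak m\dv$, which is distinguished by Theorem~\ref{thm:A}. Its largest endpoint is $-a$, with $(\mathfrak m\dv)_{-a}=(\mathfrak m^{a})\dv$. Reflecting back yields $\nu\mathfrak m^{a}\le\mathfrak m^{a+1}$.

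The main obstacle is Step 2: showing that a top segment is neither cut nor paired within its own layer or with a longer segment of $\mathfrak m_{c-1}$. This is where the precise choice of intra-layer order matters. If the longest-first order fails, I would instead try a shortest-first order and run the induction from the bottom of the layer.
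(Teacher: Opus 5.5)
Your reduction of the $S$-formulation to the coefficientwise inequality, and your Step~4 (apply the inequality to $\mathfrak m\dv$, which is distinguished by Theorem~\ref{thm:A}), are exactly what the paper does. The gap is Step~2. It carries the whole content of the theorem and you leave it open, and the mechanism you propose fails for the orders you suggest.

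\emph{Longest-first in every layer.} Here a top segment can be cut. Take $\mathfrak m=\{[0,2],[2,2],[-1,1],[1,1]\}$ with the order $([0,2],[2,2],[-1,1],[1,1])$. Decompose $[0,2]=([2,2],[0,1])$ and $[-1,1]=([1,1],[-1,0])$, and let $\tau$ exchange $(1,1)\leftrightarrow(4,1)$, $(1,2)\leftrightarrow(3,2)$, $(2,1)\leftrightarrow(3,1)$. This satisfies (R1)--(R3). Distinguishedness only gives you \emph{some} relevant decomposition, so your claim that each $\Delta\in\mathfrak m_c$ appears whole and is paired with a whole $\nu^{-1}\Delta$ cannot be derived for this order, and the ``and so on'' peeling never starts.

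\emph{Shortest-first on the top layer (your fallback).} This fails outright. For $\mathfrak m=\{[1,2],[0,2],[1,1]\}$, the standard order $([1,2],[0,2],[1,1])$ admits the relevant decomposition $[0,2]=([2,2],[0,1])$ with $\tau\colon(1,1)\leftrightarrow(2,2)$, $(2,1)\leftrightarrow(3,1)$. Yet $\nu^{-1}\mathfrak m_2=\{[0,1],[-1,1]\}\not\le\mathfrak m_1=\{[1,1]\}$, so no argument that uses only that order can give the inequality.

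The missing idea is that the two top layers need \emph{opposite} intra-layer orders. The paper lists $\mathfrak m_c$ by non-decreasing beginnings, so $A_1$ is the longest. It lists $\mathfrak m_{c-1}$ by non-increasing beginnings, with equal segments adjacent; these are conditions (A1) and (A2). Your first example violates (A1), since $[-1,1]=\nu^{-1}[0,2]$ precedes $[1,1]$. The argument then runs as follows.
\begin{enumerate}[label=(\arabic*)]
\item By Lemmas~\ref{lem:firstlast}(a) and~\ref{lem:first}, the pieces of $A_1$ are paired with bottom pieces of blocks $i_1>\cdots>i_{k_1}$, where $\bb(\Delta_{i_{k_1}})=\bb(A_1)-1$.
\item Minimality of $\bb(A_1)$ puts all these blocks after the top layer. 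The bound $\ee(\Delta_{i_1})\ge c-1$ keeps them all in the $(c-1)$-layer.
\item If $k_1\ge2$, then $\Delta_{i_{k_1}}=\nu^{-1}A_1$ would precede $\Delta_{i_{k_1-1}}$, which has strictly larger beginning. This contradicts (A1), so $A_1$ is uncut and paired with a whole copy of $\nu^{-1}A_1$.
\item Deleting these two blocks preserves (R1)--(R3) and the shape of the order, with (A2) making the deletion well defined. Induction on $|\mathfrak m_c|$ then yields the injection you wanted in Step~3.
\end{enumerate}
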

The obstruction to iterating the peeling procedure arises from a genuine 
tension between two ordering requirements imposed on 
$\mathfrak{m}_{c-1}$; see Remark~\ref{rem:tension}. This obstruction disappears under the following condition.
\begin{definition}\label{def:tameintro}
Let $\mathfrak{m}$ be a multisegment. An integer $x\in\Z$ is said to be \emph{stable for $\mathfrak{m}$} if $S_{\mathfrak{m}}(\Gamma)\geq 0$
for every segment $\Gamma$ satisfying $\ee(\Gamma)\geq x$.
We say that $\mathfrak{m}$ is \emph{tame} if the following condition holds:
for every integer $x$ that is stable for $\mathfrak{m}$ and every segment $\Delta$ satisfying $\ee(\Delta)=x$, $S_{\mathfrak{m}}(\Delta)>0$, and $S_{\mathfrak{m}}(\nu\Delta)>0$, 
one has $\bb(\Delta)\geq \bb(\Gamma)$
for every segment $\Gamma$ such that $\ee(\Gamma)=x$ and $S_{\mathfrak{m}}(\Gamma)>0$. 
\end{definition}
\begin{theorem}[Theorem~\ref{thm:main}]\label{thm:D}
Hypotheses~\ref{hyp:85} and~\ref{hyp:86} hold for every tame
multisegment, as well as for every multisegment whose dual is tame.
Moreover, every set of segments is tame, and every multisegment
$\mathfrak{m}$ satisfying $\lvert\mathfrak{m}[i]\rvert\leq 2$ \text{for all} $i$ is tame. Both inclusions are strict.
\end{theorem}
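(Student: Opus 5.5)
We argue by induction on the largest stable threshold, using Theorem~\ref{thm:B} as the target criterion. Let $\mathfrak m$ be tame and distinguished. By Theorem~\ref{thm:B} it suffices to show that every integer is stable for $\mathfrak m$, i.e.\ that $S_{\mathfrak m}(\Gamma)\ge 0$ for all $\Gamma$. Let $c$ be the largest endpoint. Then $S_{\mathfrak m}(\Gamma)=\mathfrak m(\Gamma)\ge0$ when $\ee(\Gamma)=c$, and Theorem~\ref{thm:C} gives stability of $c-1$. Suppose $x$ is stable but $x-1$ is not, so that some $\Delta_0$ with $\ee(\Delta_0)=x-1$ has $S_{\mathfrak m}(\Delta_0)<0$. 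We will construct a single standard order of $\mathfrak m$ admitting no relevant decomposition, which contradicts distinction.

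The order is an \emph{admissible} order in the sense used for Theorem~\ref{thm:peel}. Above level $x$ we run the peeling: for every segment $\Gamma$ with $\ee(\Gamma)\ge x$, the $S_{\mathfrak m}(\Gamma)$ copies of $\Gamma$ left unmatched are placed so that each is forced to pair with $\nu^{-1}\Gamma$ in any relevant decomposition. The remaining copies are cancelled telescopically against $\nu\Gamma$. Stability of $x$ guarantees these multiplicities are nonnegative, so the rigidity lemmas behind Theorem~\ref{thm:C} apply level by level. Every relevant decomposition of this order is then forced to consume, at level $x-1$, the multiset $\sum_{\ee(\Gamma)=x}S_{\mathfrak m}(\Gamma)\,\nu^{-1}\Gamma$. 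A negative value $S_{\mathfrak m}(\Delta_0)=\mathfrak m(\Delta_0)-S_{\mathfrak m}(\nu\Delta_0)<0$ means that too few copies of $\Delta_0$ are available, so no relevant decomposition exists.

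The main obstacle is the tension described in Remark~\ref{rem:tension}. Within $\mathfrak m_{x}$ the order must put the segments with $S>0$ in one position so that they are peeled from level $x+1$, and in another position so that they force their $\nu^{-1}$-translates at level $x-1$. These two requirements conflict when segments $\Delta$ with $S(\Delta)>0$ and $S(\nu\Delta)>0$ do not occur after the others. Tameness says exactly that such $\Delta$ have maximal beginning among the segments with $S>0$ at level $x$. Hence ordering $\mathfrak m_x$ by decreasing beginning, with ties broken by that status, satisfies both requirements simultaneously. I expect checking that this ordering is genuinely standard and that the forcing argument survives it to be the delicate step.

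It remains to handle the dual case, the inclusions, and strictness.
\begin{itemize}[leftmargin=*]
\item \emph{Dual case and Hypothesis~\ref{hyp:86}.} If $\mathfrak m\dv$ is tame and $\mathfrak m$ is distinguished, then $\mathfrak m\dv$ is distinguished by Theorem~\ref{thm:A}, hence of Speh type, and so is $\mathfrak m$. Hypothesis~\ref{hyp:86} follows by the equivalence in Theorem~\ref{thm:A}.
\item \emph{Sets are tame.} If $\mathfrak m$ is a set, all $S$-values lie in $\{-1,0,1\}$ with alternating structure. A segment $\Delta$ with $S(\Delta)>0$ and $S(\nu\Delta)>0$ would force $\mathfrak m(\Delta)\ge2$, so the hypothesis of tameness is vacuous.
\item \emph{$\lvert\mathfrak m[i]\rvert\le2$ implies tame.} Here at most two segments have endpoint $x$, and a short case check shows that the $\Delta$ in question is the unique one with $S>0$, or else has the larger beginning.
\item \emph{Strictness.} We exhibit an explicit small tame multisegment with three segments sharing an endpoint and a repeated segment, for example $\mathfrak n+\nu\mathfrak n$ with $\mathfrak n=[0,2]+[1,2]+[1,2]$.
\end{itemize}
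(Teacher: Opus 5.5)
The dual case, the two tameness inclusions and strictness are essentially right. The main implication, however, has a genuine gap. The one concrete choice you make, listing $\mathfrak m_x$ by decreasing beginnings, is the wrong one ("ties broken by that status" is vacuous, since two segments with the same end and beginning coincide).

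Decreasing order satisfies (A1), so peeling from level $x+1$ goes through. But it leaves the residual layer $R_x=\mathfrak m_x-\nu^{-1}R_{x+1}$ in decreasing order. The next application of Theorem~\ref{thm:peel} needs the new top layer in \emph{non-decreasing} order of beginnings. That ordering is what guarantees that the bottom partner of the first top-layer segment, whose beginning is one smaller, lies strictly below the top layer. In decreasing order, a top-layer segment can instead pair with the bottom piece of a layer-mate of smaller beginning, and your forcing claim fails.

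Concretely, take $\mathfrak m=\{[3,3],[2,2],[2,2],[1,2],[1,1]\}$.
\begin{itemize}
\item It is tame: the stable levels are $x\ge2$, and the only antecedent of the tameness condition is $\Delta=[2,2]$ at $x=2$, which has maximal beginning.
\item It is not of Speh type, since $S_{\mathfrak m}([0,1])=-1$.
\item Your order $([3,3],[2,2],[2,2],[1,2],[1,1])$ nevertheless admits a relevant decomposition. Decompose $[1,2]=([2,2],[1,1])$ and let $\tau$ exchange $(1,1)\leftrightarrow(2,1)$, $(3,1)\leftrightarrow(4,2)$ and $(4,1)\leftrightarrow(5,1)$. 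Condition (R1) holds because $\varphi(4,1)=5>3=\varphi(4,2)$.
\end{itemize}

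What is missing is a compatible listing at \emph{every} level $y$ with $x\le y\le c-1$, not only at the last stable one. The paper does this as follows.
\begin{itemize}
\item Put $\mathfrak m_c$ first, in non-decreasing order of beginnings.
\item For each such $y$, set $N_y=\nu^{-1}R_{y+1}$ and $L_y=R_y$. Let $\beta_y$ be the largest beginning in $\supp L_y$, and $X_y$ the segment with end $y$ and beginning $\beta_y$.
\item List the occurrences with beginning $>\beta_y$ non-increasingly. Then list the occurrences of $L_y$ with beginning $<\beta_y$ non-decreasingly. Then list the rest non-increasingly.
\end{itemize}
Tameness yields $\supp L_y\cap\supp N_y\subseteq\{X_y\}$, so the middle part avoids $\supp N_y$. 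Hence this listing satisfies (A1) and (A2). Moreover, deleting $N_y$ leaves the middle part followed by the copies of $X_y$, i.e.\ $L_y$ in non-decreasing order. This compatibility is what lets Theorem~\ref{thm:peel} be iterated by descending induction from $c$ down to $x$. The induction ends with $\nu^{-1}R_x\le\mathfrak m_{x-1}$, which is the contradiction. In the example above, the resulting order $([3,3],[1,2],[2,2],[2,2],[1,1])$ indeed admits no relevant decomposition.

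Minor points on the remaining parts:
\begin{itemize}
\item For a set, $S_{\mathfrak m}$ need not take values in $\{-1,0,1\}$; for $\{[1,1],[3,3]\}$ one has $S_{\mathfrak m}([0,0])=-2$. You only need $\mathfrak m(\Delta)=S_{\mathfrak m}(\Delta)+S_{\mathfrak m}(\nu\Delta)\ge2$, so this does not affect the argument.
\item In the case $\lvert\mathfrak m[i]\rvert\le2$, your case check needs stability. Stability gives $S_{\mathfrak m}(\Gamma)\le\mathfrak m(\Gamma)$ at level $x$, so positive values occur only on $\supp\mathfrak m_x$.
\item Your strictness example is valid. Here $S_{\mathfrak m}(\Delta)=\mathfrak n(\nu^{-1}\Delta)$, and no $\Delta$ has both $\Delta$ and $\nu^{-1}\Delta$ in $\mathfrak n$, so tameness holds vacuously.
\end{itemize}
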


Theorem~\ref{thm:D}, together with Theorem~\ref{thm:A}, therefore
incorporates \cite[Propositions~8.7 and~8.8]{MOS} and their dual
statements into a single result. Moreover, it applies to a strictly
larger class of multisegments. 
A genuinely new example covered by Theorem~\ref{thm:main} is provided by the multisegment $$\mathfrak m={[0,1],[1,2],[1,2],[1,2],[2,3]}.$$ This multisegment is neither a set nor does it satisfy the multiplicity condition of~[1, Proposition~8.8] or its dual counterpart: indeed, $|\mathfrak m_2|=|\mathfrak m^{1}|=3$. Moreover, both extremal inequalities of Corollary~\ref{cor:firstlevel} are satisfied, so its failure of distinction is not detected by the first-order peeling constraints. Nevertheless, $\mathfrak m$ is tame. In fact, the tameness condition is genuinely non-vacuous at the stable level $x=2$ because $S_{\mathfrak m}([1,2])=2$ and $S_{\mathfrak m}([2,3])=1$. On the other hand, $$S_{\mathfrak m}([0,1])=1-3+1=-1<0,$$ so $\mathfrak m$ is not of Speh type by Theorem \ref{thm:speh}. Therefore, Theorem~\ref{thm:main} implies that $\mathfrak m$ is not distinguished.


Finally, we show that the strategy employed in \cite[\S8.0.10]{MOS}
cannot succeed in general. This strategy consists of selecting, among
the standard orders with non-increasing endpoints, an order that admits
no non-trivial relevant decomposition.

\begin{theorem}[Theorem~\ref{thm:counter}]\label{thm:E}
Let
$\mathfrak{m}_{0}
    =
    \bigl\{[0,0],[0,1],[0,1],[1,1],[1,2]\bigr\}$.
Then $\mathfrak{m}_{0}$ is neither of Speh type nor distinguished.
Nevertheless, every standard order of $\mathfrak{m}_{0}$ whose sequence
of endpoints is non-increasing admits a relevant decomposition. In
particular, the canonical order introduced in \cite[\S8.0.12]{MOS}
admits a non-trivial relevant decomposition. There are exactly three
standard orders of $\mathfrak{m}_{0}$ that admit no relevant
decomposition, two of which have non-increasing beginnings.
\end{theorem}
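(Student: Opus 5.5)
The plan splits Theorem~\ref{thm:E} into three claims: $\mathfrak m_0$ is not of Speh type, it is not distinguished, and the endpoint-monotone orders behave as stated.

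\textbf{Step 1: not of Speh type.} This is immediate from Theorem~\ref{thm:speh}. For instance,
\[
S_{\mathfrak m_0}([0,1])=\mathfrak m_0([0,1])-\mathfrak m_0([1,2])=2-1=1\ge 0,
\]
so that value gives no obstruction. Instead I would use the size count: $|\mathfrak m_0|=5$ is odd, whereas a multisegment $\mathfrak n+\nu\mathfrak n$ has even cardinality. Alternatively, the peeling inequality fails. Here $c=2$, $\mathfrak m_2=\{[1,2]\}$ and $\nu^{-1}\mathfrak m_2=\{[0,1]\}\le \mathfrak m_1$, so the top layer is fine. But $S_{\mathfrak m_0}([0,0])=\mathfrak m_0([0,0])-\mathfrak m_0([1,1])=0$, and continuing the peeling leaves the unmatched segment $[0,1]$ once $[1,2]$ is matched.

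\textbf{Step 2: not distinguished.} Distinction requires every standard order to admit a relevant decomposition. So it suffices to exhibit one standard order with no relevant decomposition; the theorem asserts there are exactly three. I would list the standard orders of $\mathfrak m_0$. Only segments that are linked and in the correct precedence relation are constrained, so the number of orders is small, at most $5!/2=60$ up to the repeated $[0,1]$.

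For a candidate order, say one with non-increasing beginnings such as $[1,1],[1,2],[0,0],[0,1],[0,1]$ if it is standard, I would run through the definition of relevant decomposition. A relevant decomposition pairs segments whose concatenation data satisfy the Klyachko-type matching. Using the parity constraint, some segment must be in a non-trivial part. Case analysis on the part containing $[1,2]$ then forces a contradiction. To keep the case analysis honest, I would also invoke Theorem~\ref{thm:duality}: the reversed dual order of $\mathfrak m_0\dv=\{[0,0],[-1,0],[-1,0],[-1,-1],[-2,-1]\}$ gives a consistency check. This should also explain why two of the three bad orders have non-increasing beginnings, since duality exchanges beginnings and endpoints.

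\textbf{Step 3: the endpoint-monotone orders.} For every standard order with non-increasing endpoints, the segment $[1,2]$ comes first. It is followed, in some interleaving, by the endpoint-$1$ layer $\{[0,1],[0,1],[1,1]\}$, and then by $[0,0]$. There are only a few such orders, namely the positions of $[1,1]$ among the two copies of $[0,1]$, subject to standardness. For each I would write down an explicit relevant decomposition, including the canonical order of \cite[\S8.0.12]{MOS}. The exactness count, that there are precisely three bad orders, requires the full enumeration from Step 2 together with explicit decompositions for all remaining orders.

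\textbf{Main obstacle.} The main obstacle is the exhaustive verification: proving non-existence of relevant decompositions for the three bad orders, and exactly three. I would organize this by the first segment of the order and use Theorem~\ref{thm:duality} to halve the work, since the order set is closed under the reversed-dual bijection with $\mathfrak m_0\dv$. Possibly this could be reduced further by a translation symmetry, but most likely it is a finite computer-checkable enumeration written out by hand.
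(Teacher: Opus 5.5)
Your Step~1 is fine. The parity argument ($|\mathfrak n+\nu\mathfrak n|=2|\mathfrak n|$ is even, while $|\mathfrak m_0|=5$) is quicker than the paper's computation $S_{\mathfrak m_0}([-1,0])=0-2+1=-1$.

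The gap is that the rest of the theorem is a finite verification, and your proposal announces it but does not carry it out. You do not enumerate the standard orders. There are nine, since the only precedences are $[0,0]\prec[1,1]$, $[0,0]\prec[1,2]$ and $[0,1]\prec[1,2]$. You do not identify the three bad orders, and you give no relevant decomposition for any good order.

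The good orders are not routine. For $([1,2],[0,1],[0,1],[1,1],[0,0])$, which has non-increasing endpoints, you must split three blocks: $[1,2]=([2,2],[1,1])$ and both copies $[0,1]=([1,1],[0,0])$. The involution is $\tau\colon(1,1)\leftrightarrow(4,1)$, $(1,2)\leftrightarrow(2,2)$, $(2,1)\leftrightarrow(3,2)$, $(3,1)\leftrightarrow(5,1)$.

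For the non-existence half, ``case analysis on the part containing $[1,2]$'' is not yet an argument. Describing relevance as a ``Klyachko-type matching'' of segments also misses (R1)--(R3), which pair \emph{pieces}, not segments. What makes the impossibility proofs short in the paper is Lemma~\ref{lem:firstlast}(b) together with Lemma~\ref{lem:last}:
\begin{itemize}
\item In an order ending with $[0,1]$, the top piece of the last block must be matched with the top piece of an earlier block of end $2$, that is, with $[1,2]$.
\item Any further piece of the last block needs a partner block of still smaller index.
\item If the last block is unsplit, then $[1,2]$ is unsplit too, and the two blocks can be deleted, since restricting $\tau$ preserves (R1)--(R3).
\item What remains are three-block orders, which are disposed of by Lemma~\ref{lem:last}, Lemma~\ref{lem:blocks} and direct (R3) checks.
\end{itemize}
Without this, or an equivalent exhaustive check, neither non-distinction nor the count ``exactly three'' is proved.

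Your proposed shortcut via duality would also fail. The map $\mathcal O\mapsto\mathcal O\dv$ of Theorem~\ref{thm:duality} is a bijection from the standard orders of $\mathfrak m_0$ onto those of $\mathfrak m_0\dv=\{[0,0],[-1,0],[-1,0],[-1,-1],[-2,-1]\}$. This is not a $\nu$-translate of $\mathfrak m_0$: the doubled length-two segment lies below the single one in $\mathfrak m_0$ and above it in $\mathfrak m_0\dv$. So the standard orders of $\mathfrak m_0$ are not closed under this map, and no work is halved. Duality also does not explain why two bad orders have non-increasing beginnings; it only transfers that statement to orders of a different multisegment. In the paper, this claim is simply read off once the bad orders are identified: $([1,2],[1,1],[0,0],[0,1],[0,1])$ and $([1,1],[1,2],[0,0],[0,1],[0,1])$ both have beginnings $1,1,0,0,0$.
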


Theorem~\ref{thm:E} shows that standard orders with non-increasing
endpoints cannot always serve as witnesses: although
$\mathfrak{m}_{0}$ is not of Speh type, every such order admits a
non-trivial relevant decomposition. Consequently, the witness
strategy of \cite[\S8.0.10]{MOS} cannot be extended to arbitrary
multisets using only orders with non-increasing endpoints. The
duality in Theorem~\ref{thm:A} provides a natural source of other
standard orders, namely those with non-increasing beginnings.

\subsection*{Organization}
The article is organized as follows. In Section~\ref{sec:setting}, we fix notation and recall the combinatorial framework introduced in \cite[Section $8$]{MOS}. In Section~\ref{sec:speh}, we establish the numerical criterion for multisets of Speh type. Section~\ref{sec:rigid} develops the rigidity results for the first and last blocks of a relevant decomposition. In Section~\ref{sec:duality}, we prove the duality theorem and show the equivalence of the two hypotheses. Section~\ref{sec:peel} is devoted to the peeling theorem, while Section~\ref{sec:tame} contains the proof of the main theorem for tame multisets. In Section~\ref{sec:counter}, we present a counterexample showing the limitations of the strategy proposed in \cite[\S8.0.10]{MOS}. 
Section~\ref{sec:app} discusses the representation-theoretic significance of our results and several remaining open problems.

\section{The combinatorics of multisegments}\label{sec:setting}

In this section, we recall the definitions introduced in \cite[Section 8]{MOS}, using slightly expanded notation.

\subsection{multisets}

A \emph{multiset} of elements of a set $X$ is a finitely supported
function
$$\mathfrak{f}\colon X\longrightarrow \Zpos.$$
We denote its support by $\supp(\mathfrak{f})$ and define its cardinality
by
$$ \lvert\mathfrak{f}\rvert =
    \sum_{x\in X}\mathfrak{f}(x).$$
We write $x\in\mathfrak{f}$ to mean that
$x\in\supp(\mathfrak{f})$. We also use the notation
$$\mathfrak{f}=\{x_1,\ldots,x_t\},
    \qquad t=\lvert\mathfrak{f}\rvert,$$
where each element $x\in X$ occurs exactly $\mathfrak{f}(x)$ times. This listing is called an \emph{order} on $\mathfrak{f}$.

Sums and differences of multisets, as well as the relation $\leq$, are
understood pointwise. Thus,
$$\mathfrak{f}\leq\mathfrak{g}
    \quad\Longleftrightarrow\quad
    \mathfrak{f}(x)\leq\mathfrak{g}(x)
    \quad\text{for every }x\in X.$$
If $\mathfrak{f}$ takes values only in $\{0,1\}$, we identify
$\mathfrak{f}$ with its support and refer to it simply as a \emph{set}.

\subsection{Segments}

A \emph{segment} is a set of consecutive integers of the form
\[
    [a,b]=\{a,a+1,\ldots,b\},
    \qquad a\leq b.
\]
For a segment $\Delta=[a,b]$, we denote its beginning and endpoint by
\[
    \bb(\Delta)=a
    \qquad\text{and}\qquad
    \ee(\Delta)=b,
\]
respectively. We write $\Sgm$ for the set of all segments.

Define the shift and duality operations on $\Sgm$ by
\[
    \nu[a,b]=[a+1,b+1],
    \qquad
    [a,b]^\vee=[-b,-a].
\]
Both $\nu$ and $\vee$ define bijections of $\Sgm$. Moreover,
\begin{equation}\label{eq:nudual}
    (\nu\Delta)^\vee
    =
    \nu^{-1}(\Delta^\vee),
    \qquad
    \Delta^{\vee\vee}=\Delta.
\end{equation}

Let $\Delta=[a,b]$ and $\Delta'=[a',b']$ be two segments. We say that
$\Delta$ \emph{precedes} $\Delta'$, and write $\Delta\prec\Delta'$, if
\[
    a<a',
    \qquad
    b<b',
    \qquad\text{and}\qquad
    b\geq a'-1.
\]

A \emph{decomposition} of a segment $\Delta\in\Sgm$ is a tuple
\[
    (\Delta_1,\ldots,\Delta_k)\in\Sgm^k,
    \qquad k\geq 1,
\]
satisfying
\[
    \ee(\Delta_1)=\ee(\Delta),
    \qquad
    \bb(\Delta_k)=\bb(\Delta),
\]
and
\[
    \ee(\Delta_{i+1})
    =
    \bb(\Delta_i)-1,
    \qquad 1\leq i\leq k-1.
\]
Thus, a decomposition partitions $\Delta$ into $k$ non-empty consecutive
subsegments, listed in decreasing order of their endpoints. A
decomposition is called \emph{trivial} if $k=1$. Consequently, a segment
of cardinality $\ell$ has exactly $2^{\ell-1}$ decompositions.

\subsection{multisets of segments}

Let $\Oz$ denote the set of all multisets of segments. For
$\mathfrak{m}\in\Oz$ and $n\in\Z$, define
\[
    (\nu^n\mathfrak{m})(\Delta)
    =
    \mathfrak{m}(\nu^{-n}\Delta)
    \qquad\text{and}\qquad
    \mathfrak{m}^\vee(\Delta)
    =
    \mathfrak{m}(\Delta^\vee).
\]
Thus, $\nu^n\mathfrak{m}$ is obtained by shifting every segment of
$\mathfrak{m}$ by $n$, whereas $\mathfrak{m}^\vee$ is obtained by
replacing every segment by its dual.

For each $x\in\Z$, define the sub-multisets
\[
    \mathfrak{m}_x(\Delta)
    =
    \begin{cases}
        \mathfrak{m}(\Delta), & \text{if }\ee(\Delta)=x,\\
        0,                    & \text{otherwise},
    \end{cases}
    \qquad
    \mathfrak{m}^x(\Delta)
    =
    \begin{cases}
        \mathfrak{m}(\Delta), & \text{if }\bb(\Delta)=x,\\
        0,                    & \text{otherwise}.
    \end{cases}
\]
Suppose that $c_1>\cdots>c_s$
are the distinct endpoints occurring among the segments of $\mathfrak{m}$. Then, in the notation of \cite[\S8.0.11]{MOS},
$\mathfrak{m}[i]=\mathfrak{m}_{c_i}$,
and
$\mathfrak{m} = \mathfrak{m}_{c_1}+\cdots+\mathfrak{m}_{c_s}$.

An order $\mathfrak{m}=\{\Delta_1,\ldots,\Delta_k\}$
is called \emph{standard} if $\Delta_i\not\prec\Delta_j$ whenever  $i<j$.
Since $\Delta\prec\Delta'$ implies
\[
    \bb(\Delta)<\bb(\Delta')
    \qquad\text{and}\qquad
    \ee(\Delta)<\ee(\Delta'),
\]
the relation $\prec$ contains no directed cycles. Consequently,
standard orders always exist. We record the following elementary
constructions of standard orders, which will be used repeatedly.

\begin{lemma}\label{lem:orders}
Let $\mathfrak{m}\in\Oz$.
\begin{enumerate}[label=(\alph*)]
    \item Any order on $\mathfrak{m}$ in which the endpoints are
    non-increasing is standard. Likewise, any order in which the
    beginnings are non-increasing is standard.

    \item If $\Delta\in\mathfrak{m}$ has maximal endpoint, then there
    exists a standard order of $\mathfrak{m}$ whose first term is
    $\Delta$. If $\Delta\in\mathfrak{m}$ has minimal beginning, then
    there exists a standard order of $\mathfrak{m}$ whose last term
    is $\Delta$.

    \item Let $\mathfrak{n}\leq\mathfrak{m}$ and suppose that
    $\ee(\Gamma)>\ee(\Gamma')$
    for every $\Gamma\in\mathfrak{n}$ and every
    $\Gamma'\in\mathfrak{m}-\mathfrak{n}$. If $\sigma$ is a standard
    order of $\mathfrak{n}$ and $\rho$ is a standard order of
    $\mathfrak{m}-\mathfrak{n}$, then their concatenation
    $(\sigma,\rho)$ is a standard order of $\mathfrak{m}$.
\end{enumerate}
\end{lemma}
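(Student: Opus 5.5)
The plan is to prove all three parts directly from the definitions of standard order and of $\prec$. The one fact used repeatedly is that $\Delta\prec\Delta'$ forces both $\bb(\Delta)<\bb(\Delta')$ and $\ee(\Delta)<\ee(\Delta')$.

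For (a), suppose the order $\{\Delta_1,\ldots,\Delta_k\}$ has non-increasing endpoints. If $i<j$, then $\ee(\Delta_i)\geq\ee(\Delta_j)$. This is incompatible with $\Delta_i\prec\Delta_j$, which would require $\ee(\Delta_i)<\ee(\Delta_j)$. So the order is standard. The case of non-increasing beginnings is identical, using $\bb(\Delta_i)<\bb(\Delta_j)$ in place of the endpoint inequality.

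For (b), let $\Delta$ have maximal endpoint. List $\Delta$ first, followed by the remaining segments $\mathfrak m-\{\Delta\}$ in order of non-increasing endpoints. Then $\Delta$ itself has maximal endpoint, so the whole list has non-increasing endpoints and is standard by (a). Dually, let $\Delta$ have minimal beginning. List the segments of $\mathfrak m-\{\Delta\}$ in order of non-increasing beginnings and append $\Delta$ at the end. The beginnings are still non-increasing, so this order is standard by (a). One could argue instead via a topological sort of the acyclic relation $\prec$, but (a) makes that unnecessary.

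For (c), take $i<j$ in the concatenation $(\sigma,\rho)$ and consider where the two positions fall. If both lie in $\sigma$ or both lie in $\rho$, then $\Delta_i\not\prec\Delta_j$ because $\sigma$ and $\rho$ are standard. The only remaining case is $\Delta_i\in\mathfrak n$ and $\Delta_j\in\mathfrak m-\mathfrak n$, since $\sigma$ precedes $\rho$. Here the hypothesis gives $\ee(\Delta_i)>\ee(\Delta_j)$, which excludes $\Delta_i\prec\Delta_j$.

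Every step is elementary, and I expect no real obstacle. The only care needed is in (b), to check that the chosen extremal segment is compatible with the monotone ordering. This holds because its endpoint is maximal (respectively, its beginning is minimal), so placing it first (respectively, last) keeps the sequence monotone.
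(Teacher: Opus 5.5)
Your proposal is correct and follows the paper's argument for (a) and (c). The one difference is in (b): you build an explicit monotone listing and invoke (a), while the paper notes that $\Delta\not\prec\Gamma$ (respectively $\Gamma\not\prec\Delta$) for every $\Gamma\in\mathfrak{m}$ and puts $\Delta$ before (respectively after) an arbitrary standard order of $\mathfrak{m}-\{\Delta\}$, which shows slightly more, namely that every standard order of the remainder extends this way.
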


\begin{proof}
For part~\textup{(a)}, suppose that $\Delta$ occurs before $\Delta'$ in
an order whose endpoints are non-increasing. Then
\[
    \ee(\Delta)\geq\ee(\Delta').
\]
Since $\Delta\prec\Delta'$ would require
$\ee(\Delta)<\ee(\Delta')$, it follows that
$\Delta\not\prec\Delta'$. Hence, the order is standard. The proof for an
order with non-increasing beginnings is analogous, since
$\Delta\prec\Delta'$ requires
$\bb(\Delta)<\bb(\Delta')$.

For part~\textup{(b)}, suppose that $\Delta$ has maximal endpoint.
Then $\Delta\not\prec\Gamma$ for every
$\Gamma\in\mathfrak{m}$, because $\Delta\prec\Gamma$ would imply
\[
    \ee(\Delta)<\ee(\Gamma),
\]
contrary to the maximality of $\ee(\Delta)$. We may therefore place
$\Delta$ first and follow it by any standard order of
$\mathfrak{m}-\{\Delta\}$.

Dually, suppose that $\Delta$ has minimal beginning. Then
$\Gamma\not\prec\Delta$ for every $\Gamma\in\mathfrak{m}$, since
$\Gamma\prec\Delta$ would imply
\[
    \bb(\Gamma)<\bb(\Delta),
\]
contrary to the minimality of $\bb(\Delta)$. Thus, any standard order
of $\mathfrak{m}-\{\Delta\}$ may be followed by $\Delta$.

For part~\textup{(c)}, let $\Gamma\in\mathfrak{n}$ and
$\Gamma'\in\mathfrak{m}-\mathfrak{n}$. By assumption,
\[
    \ee(\Gamma)>\ee(\Gamma').
\]
Therefore, $\Gamma\not\prec\Gamma'$, because
$\Gamma\prec\Gamma'$ would require
$\ee(\Gamma)<\ee(\Gamma')$. Since $\sigma$ and $\rho$ are standard
within their respective sub-multisets, no pair of segments violates the
standardness condition in their concatenation. Hence, $(\sigma,\rho)$
is a standard order of $\mathfrak{m}$.
\end{proof}

\subsection{Relevant decompositions}

Let $\mathfrak{m}=\{\Delta_1,\ldots,\Delta_k\}$
be an order on a multiset of segments. A \emph{decomposition of the
ordered multiset} is a choice of a
decomposition $ (\Delta_{i,1},\ldots,\Delta_{i,k_i})$
of the segment $\Delta_i$ for each $i\in\{1,\ldots,k\}$. Let
$$I = \bigl\{(i,j):1\leq i\leq k,\ 1\leq j\leq k_i\bigr\}.$$
We equip $I$ with the lexicographic order, denoted by
$\preceq_{\mathrm{lex}}$, and with the partial order $\ll$ defined by
\[
    (i,j)\ll(i',j')
    \quad\Longleftrightarrow\quad
    i<i'.
\]
We refer to $i$ as the \emph{block} of the index $(i,j)$ and write
$\pr_1(i,j)=i$.
The decomposition is called \emph{trivial} if $k_i=1$ for every
$i\in\{1,\ldots,k\}$.
For $\imath=(i,j)\in I$, we use the abbreviated notation $\Delta_{\imath}=\Delta_{i,j}$.
\begin{definition}[{\cite[Definition~8.1]{MOS}}]\label{def:relevant}
A decomposition $ \{\Delta_{\imath}:\imath\in I\}$
is said to be \emph{relevant} to the order
$\{\Delta_1,\ldots,\Delta_k\}$ if there exists an involution
$\tau\colon I\to I$ satisfying the following conditions:
\begin{enumerate}[label=(R\arabic*)]
    \item $\tau(i,j+1)\ll\tau(i,j)$
    for every $1\leq i\leq k$ and for every $1\leq j\leq k_i-1$;

    \item $\tau(\imath)\neq\imath$ for every $\imath\in I$;

    \item $\Delta_{\imath}
        = \nu\Delta_{\tau(\imath)}$ whenever
    $\imath\prec_{\mathrm{lex}}\tau(\imath)$.
\end{enumerate}
\end{definition}

Given an involution $\tau$ as above, define $\varphi
    =
    \pr_1\circ\tau
    \colon
    I\longrightarrow\{1,\ldots,k\}$. 
Condition~\textup{(R1)} is equivalent to the requirement that
$\varphi$ be strictly decreasing along each block; explicitly,
\[
    \varphi(i,j+1)<\varphi(i,j)
    \qquad
    \text{for }1\leq j\leq k_i-1.
\]

\begin{definition}[{\cite[Definitions~8.3 and~8.4]{MOS}}]
\label{def:distspeh}
A multisegment $\mathfrak{m}\in\Oz$ is called \emph{distinguished} if
every standard order of $\mathfrak{m}$ admits a relevant decomposition.
It is said to be \emph{of Speh type} if there exists
$\mathfrak{n}\in\Oz$ such that
\[
    \mathfrak{m}
    =
    \mathfrak{n}+\nu\mathfrak{n}.
\]
\end{definition}

\begin{lemma}\label{lem:precnu}
For every $\Delta\in\Sgm$, one has $\Delta\prec\nu\Delta$. 
Consequently, in any standard order of $\mathfrak{m}$, every occurrence
of $\nu\Delta$ appears before every occurrence of $\Delta$.
\end{lemma}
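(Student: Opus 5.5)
The plan is to verify the three defining inequalities of $\prec$ directly and then read off the consequence from the definition of a standard order.

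Write $\Delta=[a,b]$ with $a\leq b$, so that $\nu\Delta=[a+1,b+1]$. The relation $\Delta\prec\nu\Delta$ requires $a<a+1$, $b<b+1$, and $b\geq (a+1)-1=a$. The first two are immediate, and the third is exactly the condition $a\leq b$ that makes $\Delta$ a segment. Hence $\Delta\prec\nu\Delta$ for every $\Delta\in\Sgm$.

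For the consequence, fix a standard order $\{\Delta_1,\ldots,\Delta_k\}$ of $\mathfrak{m}$. Suppose some occurrence $\Delta_i=\Delta$ comes before some occurrence $\Delta_j=\nu\Delta$, that is, $i<j$. By the first part, $\Delta_i\prec\Delta_j$ with $i<j$, which contradicts the defining property of a standard order ($\Delta_i\not\prec\Delta_j$ whenever $i<j$). So every occurrence of $\nu\Delta$ must precede every occurrence of $\Delta$.

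The two occurrences cannot coincide, since $\nu\Delta\neq\Delta$; this is why the comparison of indices is strict. There is no real obstacle here. The only point that needs care is noticing that the third inequality $b\geq a'-1$ uses the hypothesis $a\leq b$, which is precisely what makes $\Delta$ a segment.
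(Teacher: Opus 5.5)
Your proof is correct and follows essentially the same route as the paper's: you verify the three defining inequalities of $\prec$ for $\Delta=[a,b]$ and $\nu\Delta=[a+1,b+1]$, with the third coming from $a\leq b$. You then derive the ordering consequence by contradiction with the definition of a standard order.
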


\begin{proof}
Write $\Delta=[a,b]$. Then $\nu\Delta=[a+1,b+1]$. 
Since $a<a+1$, $b<b+1$, and $ b\geq a=(a+1)-1$, where  the inequality follows from $a\leq b$, we obtain $\Delta\prec\nu\Delta$. 
Now consider a standard order of $\mathfrak{m}$. If an occurrence of
$\Delta$ appeared before an occurrence of $\nu\Delta$, then the earlier
segment would precede the later segment, contradicting the definition
of a standard order. Hence, every occurrence of $\nu\Delta$ must appear
before every occurrence of $\Delta$.
\end{proof}

\begin{lemma}[{\cite[\S8.0.7]{MOS}}]\label{lem:trivialrel}
Let $\mathcal{O}=\{\Delta_1,\ldots,\Delta_k\}$
be an order on $\mathfrak{m}$.
\begin{enumerate}[label=(\alph*)]
    \item If the trivial decomposition is relevant to $\mathcal{O}$,
    then $\mathfrak{m}$ is of Speh type.

    \item If $\mathcal{O}$ is standard and $\mathfrak{m}$ is of Speh
    type, then the trivial decomposition is relevant to $\mathcal{O}$.
\end{enumerate}
Consequently, for a standard order $\mathcal{O}$, the trivial
decomposition is relevant to $\mathcal{O}$ if and only if
$\mathfrak{m}$ is of Speh type. In particular, every multisegment of
Speh type is distinguished.
\end{lemma}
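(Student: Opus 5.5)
For part (a), I take the trivial decomposition, so $I=\{(i,1):1\le i\le k\}$ can be identified with $\{1,\dots,k\}$. Assume it is relevant via an involution $\tau$. Condition (R1) is vacuous here, since each block has only one index. Condition (R2) says $\tau$ has no fixed points, so $I$ splits into $k/2$ two-element orbits $\{i,\tau(i)\}$. For each orbit, write $i<\tau(i)$ for the smaller index. Condition (R3) then gives $\Delta_i=\nu\Delta_{\tau(i)}$. Let $\mathfrak n$ be the multiset of the segments $\Delta_{\tau(i)}$, one for each orbit. Since $\mathfrak m$ is the disjoint union of its orbit pairs $\{\nu\Delta_{\tau(i)},\Delta_{\tau(i)}\}$, we get $\mathfrak m=\mathfrak n+\nu\mathfrak n$. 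Note that this direction does not need standardness.

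For part (b), write $\mathfrak m=\mathfrak n+\nu\mathfrak n$. The task is to build a fixed-point-free involution on the positions $\{1,\dots,k\}$ that pairs each position holding a segment $\nu\Gamma$ (with $\Gamma\in\mathfrak n$) with a later position holding $\Gamma$. I would do this one segment value $\Gamma\in\supp(\mathfrak n)$ at a time.

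The delicate point is that a segment may appear both in $\mathfrak n$ and in $\nu\mathfrak n$. For this reason I would argue along each $\nu$-chain $\Gamma,\nu\Gamma,\nu^2\Gamma,\dots$ separately. Fix a chain and set $n_t=\mathfrak n(\nu^t\Gamma)$. Then the multiplicity of $\nu^t\Gamma$ in $\mathfrak m$ is $m_t=n_t+n_{t-1}$. For each $t$, I designate $n_{t-1}$ of the occurrences of $\nu^t\Gamma$ as partners of occurrences of $\nu^{t-1}\Gamma$, and I designate the remaining $n_t$ occurrences to be matched with occurrences of $\nu^{t+1}\Gamma$.

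Lemma~\ref{lem:precnu} says that in a standard order every occurrence of $\nu^{t+1}\Gamma$ comes before every occurrence of $\nu^t\Gamma$. So any bijection between the $n_t$ designated copies of $\nu^t\Gamma$ and $n_t$ copies of $\nu^{t+1}\Gamma$ pairs an earlier $\nu^{t+1}\Gamma$ with a later $\nu^t\Gamma$. This is exactly what (R3) requires. The pairs are disjoint and cover all positions, so they define a fixed-point-free involution, giving (R2). Condition (R1) is again vacuous.

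The main obstacle is purely bookkeeping: making sure the designations are consistent, meaning each occurrence is used exactly once. The count $m_t=n_{t-1}+n_t$ guarantees this. The final consequences follow directly. For a standard order, (a) and (b) together give the equivalence. If $\mathfrak m$ is of Speh type, then every standard order admits a relevant decomposition (the trivial one), so $\mathfrak m$ is distinguished.
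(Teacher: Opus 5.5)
Your proposal is correct and follows essentially the same route as the paper. Part (a) builds $\mathfrak n$ from the later member of each $\tau$-orbit. Part (b) labels the occurrences of each segment as coming from $\mathfrak n$ or from $\nu\mathfrak n$ (your $\nu$-chain bookkeeping with $m_t=n_{t-1}+n_t$ is just an explicit version of this). It then pairs each $\Gamma$ with a $\nu\Gamma$ and uses Lemma~\ref{lem:precnu} to get (R3).
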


\begin{proof}
For the trivial decomposition, one has $k_i=1$ for every $i$, and hence
\[
    I=\{(i,1):1\leq i\leq k\}.
\]
Identifying $(i,1)$ with $i$, we may regard $I$ as
$\{1,\ldots,k\}$. Condition~\textup{(R1)} is then vacuous, whereas
conditions~\textup{(R2)} and~\textup{(R3)} require a fixed-point-free
involution
\[
    \tau\colon\{1,\ldots,k\}\longrightarrow\{1,\ldots,k\}
\]
such that
\[
    \Delta_i=\nu\Delta_{\tau(i)}
    \qquad\text{whenever }i<\tau(i).
\]

Suppose first that the trivial decomposition is relevant to
$\mathcal{O}$. Since $\tau$ is a fixed-point-free involution, its
orbits are two-element sets. Define the multisegment
\[
    \mathfrak{n}
    =
    \bigl\{\Delta_{\tau(i)}:i<\tau(i)\bigr\}.
\]
For every pair $\{i,\tau(i)\}$ with $i<\tau(i)$, condition~\textup{(R3)}
gives
\[
    \Delta_i=\nu\Delta_{\tau(i)}.
\]
Thus, each orbit of $\tau$ contributes one segment to
$\mathfrak{n}$ and its $\nu$-translate to $\nu\mathfrak{n}$. Therefore,
\[
    \mathfrak{m}=\mathfrak{n}+\nu\mathfrak{n},
\]
so $\mathfrak{m}$ is of Speh type. Notice that this argument does not
require $\mathcal{O}$ to be standard.

Conversely, suppose that $\mathcal{O}$ is standard and that $\mathfrak{m}=\mathfrak{n}+\nu\mathfrak{n}$
for some $\mathfrak{n}\in\Oz$. We regard the occurrences in
$\mathfrak{m}$ as partitioned into two disjoint families: the
occurrences contributed by $\mathfrak{n}$ and those contributed by
$\nu\mathfrak{n}$. More precisely, for each segment $\Gamma$, choose
$\mathfrak{n}(\Gamma)$ occurrences of $\Gamma$ in $\mathfrak{m}$ and
label them as occurrences arising from $\mathfrak{n}$. Label the
remaining occurrences according to the summand $\nu\mathfrak{n}$.
This is possible because, for every segment $\Delta$,
\[
    \mathfrak{m}(\Delta)
    =
    \mathfrak{n}(\Delta)
    +
    \mathfrak{n}(\nu^{-1}\Delta).
\]
For every occurrence of a segment $\Gamma$ contributed by
$\mathfrak{n}$, pair it with the corresponding occurrence of
$\nu\Gamma$ contributed by $\nu\mathfrak{n}$. These pairs define a
fixed-point-free involution $\tau$ on $\{1,\ldots,k\}$.
By Lemma~\ref{lem:precnu}, one has $\Gamma\prec\nu\Gamma$. 
Since $\mathcal{O}$ is standard, every occurrence of $\nu\Gamma$
appears before every occurrence of $\Gamma$. Thus, if $i<\tau(i)$,
then $\Delta_i$ is the occurrence of $\nu\Gamma$ and
$\Delta_{\tau(i)}$ is the corresponding occurrence of $\Gamma$.
Consequently, $\Delta_i=\nu\Delta_{\tau(i)}$. 
Hence, condition~\textup{(R3)} holds. Conditions~\textup{(R1)} and
\textup{(R2)} are immediate, because the decomposition is trivial and
$\tau$ has no fixed points. Therefore, the trivial decomposition is
relevant to $\mathcal{O}$.

The equivalence now follows from parts~\textup{(a)} and~\textup{(b)}.
Finally, if $\mathfrak{m}$ is of Speh type, then the trivial
decomposition is relevant to every standard order of $\mathfrak{m}$.
Hence, $\mathfrak{m}$ is distinguished.
\end{proof}

\begin{remark}\label{rem:needstandard}
The standard assumption in Lemma~\ref{lem:trivialrel}\textup{(b)}
is essential. Correspondingly, the ``only if'' direction in
\cite[\S8.0.7]{MOS} is stated for standard orders.
Indeed, let $ \mathfrak{m}=\{\Delta,\nu\Delta\}$. 
Then $\mathfrak{m} = \{\Delta\}+\nu\{\Delta\}$, 
so $\mathfrak{m}$ is of Speh type. However, consider the order $\mathcal{O}=(\Delta,\nu\Delta)$. 
By Lemma~\ref{lem:precnu}, one has $\Delta\prec\nu\Delta$, and hence
$\mathcal{O}$ is not standard. The only fixed-point-free involution of
$\{1,2\}$ is the transposition $\tau=(1\ 2)$. Since $1<2$,
condition~\textup{(R3)} would require 
$\Delta = \nu(\nu\Delta) = \nu^2\Delta$, 
which is impossible for a finite segment. Therefore, the trivial
decomposition is not relevant to $\mathcal{O}$.
\end{remark}

\begin{remark}\label{rem:converse}
Lemma~\ref{lem:trivialrel} establishes the converse implication to
Hypothesis~\ref{hyp:85}: every multisegment of Speh type is
distinguished. Thus, Hypothesis~\ref{hyp:85} would provide the reverse
implication, and the two statements together would yield
$$\mathfrak{m}\text{ is distinguished}
    \quad\Longleftrightarrow\quad
    \mathfrak{m}\text{ is of Speh type}.$$

However, one should not conclude that the trivial decomposition is the
only relevant decomposition of every standard order of a multisegment
of Speh type. Indeed, consider
\[
    \mathfrak{m}
    =
    \{[0,0],[0,1],[1,1],[1,2]\}.
\]
This multisegment is of Speh type because 
$\mathfrak{m} = \mathfrak{n}+\nu\mathfrak{n}$ where $\mathfrak{n}=\{[0,0],[0,1]\}$. 
The standard order
\[
    \mathcal{O}
    =
    ([1,1],[1,2],[0,0],[0,1])
\]
admits, in addition to the trivial decomposition, the non-trivial
decomposition
$$\Delta_{1}=[1,1],\quad \Delta_{2}=[1,2]=([2,2],[1,1]),\quad \Delta_{3}=[0,0],\quad
\Delta_{4}=[0,1]=([1,1],[0,0]).$$
In terms of the corresponding index set 
$I = \{(1,1),(2,1),(2,2),(3,1),(4,1),(4,2)\}$, 
define $\tau$ by exchanging the elements in each of the pairs
\[
    \{(1,1),(4,2)\},\qquad
    \{(2,1),(4,1)\},\qquad
    \{(2,2),(3,1)\}.
\]
Condition~\textup{(R1)} holds because $\varphi(2,1)=4>3=\varphi(2,2)$ and $ \varphi(4,1)=2>1=\varphi(4,2)$. 
Moreover, for the lexicographically smaller index in each pair, one has
\[
\begin{aligned}
    \Delta_{1,1}
        &=[1,1]=\nu[0,0]=\nu\Delta_{4,2},\\
    \Delta_{2,1}
        &=[2,2]=\nu[1,1]=\nu\Delta_{4,1},\\
    \Delta_{2,2}
        &=[1,1]=\nu[0,0]=\nu\Delta_{3,1}.
\end{aligned}
\]
Thus, Condition~\textup{(R3)} also holds, and the decomposition is
relevant.

Consequently, the property that the trivial decomposition is the only
relevant decomposition is a property of a particular order, rather
than of the multisegment itself. For the multisegment above, the canonical order $([1,2],[1,1],[0,1],[0,0])$
of \cite[\S8.0.12]{MOS} admits only the trivial relevant decomposition.
\end{remark}

By Lemma~\ref{lem:trivialrel}, to prove Hypothesis~\ref{hyp:85} for a
given multisegment $\mathfrak{m}$, it is enough to exhibit a standard
order of $\mathfrak{m}$ that admits no non-trivial relevant
decomposition. Indeed, if $\mathfrak{m}$ is distinguished, then this
order must admit a relevant decomposition; it must therefore admit the
trivial decomposition, and Lemma~\ref{lem:trivialrel}\textup{(a)}
implies that $\mathfrak{m}$ is of Speh type. This is the strategy
employed in \cite[\S8.0.10]{MOS}.

We call such an order a \emph{witness} for $\mathfrak{m}$. More
generally, a standard order that admits no relevant decomposition at
all is called a \emph{strong witness}. It follows directly from the
definition that a multisegment is non-distinguished if and only if it
admits a strong witness.

\section{A numerical criterion for multisegments of Speh type}\label{sec:speh}

For each fixed cardinality, the set of segments forms a single
$\nu$-orbit. Consequently, the property of being of Speh type may be
examined separately on each such orbit. The following numerical
criterion makes this observation precise and will be used repeatedly.

\begin{definition}\label{def:S}
Let $\mathfrak{m}\in\Oz$ and $\Delta\in\Sgm$. Define
\[
    S_{\mathfrak{m}}(\Delta)
    :=
    \sum_{t\geq 0}(-1)^t
    \mathfrak{m}(\nu^t\Delta) =
    \mathfrak{m}(\Delta)
    -
    \mathfrak{m}(\nu\Delta)
    +
    \mathfrak{m}(\nu^2\Delta)
    -\cdots.
\]
This sum is finite because $\mathfrak{m}$ has finite support.
\end{definition}

It follows immediately from the definition that for $\Delta\in\Sgm$ we have 
\begin{equation}\label{eq:Srec}
    S_{\mathfrak{m}}(\Delta)
    +
    S_{\mathfrak{m}}(\nu\Delta)
    =
    \mathfrak{m}(\Delta).
\end{equation}

\begin{theorem}\label{thm:speh}
A multisegment $\mathfrak{m}\in\Oz$ is of Speh type if and only if $S_{\mathfrak{m}}(\Delta)\geq 0$
for every $\Delta\in\Sgm$.
\end{theorem}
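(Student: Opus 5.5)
The plan is to show that the decomposition $\mathfrak m=\mathfrak n+\nu\mathfrak n$, when it exists, is forced, and that the forced candidate is exactly $\mathfrak n(\Delta)=S_{\mathfrak m}(\Delta)$.

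Key observation: if $\mathfrak m=\mathfrak n+\nu\mathfrak n$, then evaluating at $\Delta$ gives
\[
    \mathfrak m(\Delta)=\mathfrak n(\Delta)+\mathfrak n(\nu^{-1}\Delta).
\]
Replacing $\Delta$ by $\nu\Delta$, this reads $\mathfrak m(\nu\Delta)=\mathfrak n(\nu\Delta)+\mathfrak n(\Delta)$.

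For the forward direction, suppose $\mathfrak m=\mathfrak n+\nu\mathfrak n$ with $\mathfrak n\in\Oz$. Then for every $\Delta$ and $t\ge0$,
\[
    \mathfrak m(\nu^t\Delta)=\mathfrak n(\nu^t\Delta)+\mathfrak n(\nu^{t-1}\Delta).
\]
Form the alternating sum $\sum_{t\geq0}(-1)^t\mathfrak m(\nu^t\Delta)$ with these expressions substituted. The sum telescopes: each $\mathfrak n(\nu^t\Delta)$ with $t\ge0$ appears once with sign $(-1)^t$ and once with sign $(-1)^{t+1}$, so it cancels. The only surviving term is $\mathfrak n(\nu^{-1}\Delta)$, coming from $t=0$. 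Since $\mathfrak n$ has finite support, all sums are finite, so this manipulation is legitimate. Hence
\[
    S_{\mathfrak m}(\Delta)=\mathfrak n(\nu^{-1}\Delta)\geq0.
\]

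For the converse, assume $S_{\mathfrak m}\geq0$ everywhere. Define $\mathfrak n(\Gamma):=S_{\mathfrak m}(\nu\Gamma)$, which is the candidate forced by the computation above. Check the required properties in turn.
\begin{enumerate}[label=(\roman*)]
    \item $\mathfrak n$ is nonnegative by hypothesis.
    \item $\mathfrak n$ is finitely supported. If $\Gamma$ lies in a $\nu$-orbit (a fixed segment length) disjoint from $\supp\mathfrak m$, then $S_{\mathfrak m}(\nu\Gamma)=0$. Within each of the finitely many relevant orbits, $S_{\mathfrak m}(\nu\Gamma)=0$ once $\nu\Gamma$ lies beyond every support element, that is, once $\ee(\nu\Gamma)$ is large. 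For $\Gamma$ far to the left, $S_{\mathfrak m}(\nu\Gamma)$ equals the full alternating orbit sum $\sum_{t}(-1)^{t}\mathfrak m(\nu^{t}\Delta_0)$ up to a global sign. This sum need not vanish, and the left tail is the main obstacle. It is resolved by \eqref{eq:Srec}: for $\Delta$ below the support, $\mathfrak m(\Delta)=0$, so $S_{\mathfrak m}(\Delta)=-S_{\mathfrak m}(\nu\Delta)$. Nonnegativity of both forces $S_{\mathfrak m}(\Delta)=0$ there, so $\mathfrak n$ is finitely supported.
    \item $\mathfrak m=\mathfrak n+\nu\mathfrak n$. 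By definition $(\mathfrak n+\nu\mathfrak n)(\Delta)=S_{\mathfrak m}(\nu\Delta)+S_{\mathfrak m}(\Delta)$, and this equals $\mathfrak m(\Delta)$ by \eqref{eq:Srec}.
\end{enumerate}
Hence $\mathfrak m$ is of Speh type, and the main point to state carefully is the finite-support argument in (ii), where nonnegativity is genuinely used beyond positivity of $\mathfrak n$.
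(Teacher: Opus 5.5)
Your proof is correct and follows the paper's argument essentially step for step. The forward direction is the same telescoping computation. For the converse you use the same candidate $\mathfrak n(\Gamma)=S_{\mathfrak m}(\nu\Gamma)$ and verify it with \eqref{eq:Srec}, using nonnegativity to force vanishing on the left tail exactly as the paper does. The only slip is in your opening sentence: the forced candidate is $\mathfrak n(\Delta)=S_{\mathfrak m}(\nu\Delta)$, not $S_{\mathfrak m}(\Delta)$, which is what you correctly use later.
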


\begin{proof}
Suppose first that $\mathfrak{m}$ is of Speh type. Then there exists
$\mathfrak{n}\in\Oz$ such that $\mathfrak{m}=\mathfrak{n}+\nu\mathfrak{n}$. 
By the definition of the action of $\nu$ on multisegments, we have 
$(\nu\mathfrak{n})(\nu^t\Delta) =
    \mathfrak{n}(\nu^{t-1}\Delta)$. 
Therefore,
\begin{align*}
    S_{\mathfrak{m}}(\Delta)
    &=
    \sum_{t\geq 0}(-1)^t
    \mathfrak{n}(\nu^t\Delta)
    +
    \sum_{t\geq 0}(-1)^t
    \mathfrak{n}(\nu^{t-1}\Delta)\\
    &=
    \sum_{t\geq 0}(-1)^t
    \mathfrak{n}(\nu^t\Delta)
    +
    \mathfrak{n}(\nu^{-1}\Delta)
    +
    \sum_{t\geq 1}(-1)^t
    \mathfrak{n}(\nu^{t-1}\Delta).
\end{align*}
Reindexing the last sum by $s=t-1$ gives
\[
    \sum_{t\geq 1}(-1)^t
    \mathfrak{n}(\nu^{t-1}\Delta)
    =
    -
    \sum_{s\geq 0}(-1)^s
    \mathfrak{n}(\nu^s\Delta).
\]
Hence, the two alternating sums cancel and we obtain $S_{\mathfrak{m}}(\Delta) =
    \mathfrak{n}(\nu^{-1}\Delta) \geq 0$. 

Conversely, suppose that $S_{\mathfrak{m}}(\Delta)\geq 0$ for every $\Delta\in\Sgm$. 
Define a function $\mathfrak{n}\colon\Sgm\to\Zpos$ by
\[
    \mathfrak{n}(\Delta)
    :=
    S_{\mathfrak{m}}(\nu\Delta).
\]
Using \eqref{eq:Srec}, we obtain
\begin{align*}
    (\mathfrak{n}+\nu\mathfrak{n})(\Delta)
    &=
    \mathfrak{n}(\Delta)
    +
    \mathfrak{n}(\nu^{-1}\Delta)\\
    &=
    S_{\mathfrak{m}}(\nu\Delta)
    +
    S_{\mathfrak{m}}(\Delta)\\
    &=
    \mathfrak{m}(\Delta).
\end{align*}
It remains only to verify that $\mathfrak{n}$ has finite support.

Only finitely many segment cardinalities occur in support of $\mathfrak{m}$. For every other cardinality, all multiplicities of
$\mathfrak{m}$, and hence all corresponding values of
$S_{\mathfrak{m}}$, vanish. We may therefore fix a cardinality
$\ell$ occurring in $\mathfrak{m}$ and set
$$c_a :=
    \mathfrak{m}([a,a+\ell-1]),
    \qquad
    S_a :=
    S_{\mathfrak{m}}([a,a+\ell-1]).$$
Equation~\eqref{eq:Srec} gives $S_a+S_{a+1}=c_a$, 
or equivalently, $S_a=c_a-S_{a+1}$. 
For all sufficiently large $a$, one has $c_{a'}=0$  for every $a'\geq a$. 
It follows directly from the definition of $S_{\mathfrak{m}}$ that
$S_a=0$ for all such $a$.
Similarly, for all sufficiently small $a$, one has $c_a=0$. Hence, $S_a+S_{a+1}=0$. 
Since both $S_a$ and $S_{a+1}$ are nonnegative, this implies $S_a=S_{a+1}=0$. 
Thus, for each relevant cardinality $\ell$, only finitely many of the
values $S_a$ are nonzero. Since only finitely many cardinalities occur
in $\mathfrak{m}$, the function $\mathfrak{n}$ has finite support and
therefore belongs to $\Oz$. Consequently, $\mathfrak{m} = \mathfrak{n}+\nu\mathfrak{n}$, 
so $\mathfrak{m}$ is of Speh type.
\end{proof}

\begin{remark}\label{rem:peelingnumbers}
Let $c$ be the largest endpoint occurring among the segments of
$\mathfrak{m}$. For each $x\leq c$, define a finitely supported
integer-valued function $R_x$ on $\Sgm$, supported on the segments
having endpoint $x$, by the recursion
$$R_c=\mathfrak{m}_c,
    \qquad
    R_{x-1} =
    \mathfrak{m}_{x-1}-\nu^{-1}R_x.$$
Unwinding this recursion yields 
$R_x(\Delta)  = S_{\mathfrak{m}}(\Delta)$ whenever $\ee(\Delta)=x$. 
Thus, $R_x$ records the signed multiplicities remaining after
successively peeling $\mathfrak{m}$ from the largest endpoint
downwards. In particular, $R_x$ is a genuine multiset precisely when
all its coefficients are nonnegative. Therefore, 
Theorem~\ref{thm:speh} states that $\mathfrak{m}$ is of Speh
type if and only if $R_x\geq 0$ for every $x\leq c$. 
The first nontrivial condition in this procedure is $R_{c-1}  = \mathfrak{m}_{c-1} - \nu^{-1}\mathfrak{m}_c
    \geq 0$,
which is equivalent to 
$$\nu^{-1}\mathfrak{m}_c \leq \mathfrak{m}_{c-1}.$$
\end{remark}

\section{Rigidity properties of relevant decompositions}\label{sec:rigid}

Throughout this section, we fix a standard order $\mathcal{O}=\{\Delta_1,\ldots,\Delta_k\}$
of a multisegment $\mathfrak{m}$, a relevant decomposition $ \{\Delta_{\imath}\}_{\imath\in I}$, 
and an involution $\tau$ satisfying conditions
\textup{(R1)}--\textup{(R3)} of Definition~\ref{def:relevant}. We also
write $\varphi=\pr_1\circ\tau$. 

\begin{lemma}\label{lem:blocks}
For every $(i,j)\in I$, one has $\varphi(i,j)\neq i$. 
Equivalently, $\tau$ never maps an index to another index belonging to
the same block.
\end{lemma}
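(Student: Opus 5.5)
The plan is to argue by contradiction, using only conditions \textup{(R1)} and \textup{(R2)} of Definition~\ref{def:relevant}. The key observation is that $\tau$ is an involution, so a pairing inside a single block forces $\varphi$ to take the value $i$ at two different positions of that block.

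First suppose that $\varphi(i,j)=i$ for some $(i,j)\in I$. Then $\tau(i,j)=(i,j')$ for some $1\leq j'\leq k_i$. Condition~\textup{(R2)} says that $\tau$ has no fixed points, so $j'\neq j$. Since $\tau$ is an involution, we also have $\tau(i,j')=(i,j)$, and therefore
\[
    \varphi(i,j)=i=\varphi(i,j').
\]
After relabelling we may assume $j<j'$.

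Next we apply condition~\textup{(R1)}. As noted after Definition~\ref{def:relevant}, it is equivalent to $\varphi$ being strictly decreasing along each block. Chaining the inequalities $\varphi(i,l+1)<\varphi(i,l)$ for $l=j,\ldots,j'-1$ gives $\varphi(i,j')<\varphi(i,j)$. This contradicts the equality displayed above, so $\varphi(i,j)\neq i$ for every $(i,j)\in I$.

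The equivalence with the second formulation of the lemma is just the definition $\varphi=\pr_1\circ\tau$: the index $\tau(i,j)$ lies in block $i$ exactly when $\varphi(i,j)=i$. I do not expect any real obstacle. The only point needing care is to use that $\tau$ is an involution, so that both $(i,j)$ and $(i,j')$ have $\varphi$-value $i$. Condition~\textup{(R3)}, the segment geometry, and the standardness of $\mathcal{O}$ are not needed.
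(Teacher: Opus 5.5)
Your proof is correct and follows the paper's argument: both use the involution property to get $\varphi(i,j)=\varphi(i,j')=i$, then combine injectivity of $\varphi$ along a block (from \textup{(R1)}) with fixed-point-freeness \textup{(R2)}. The only difference is the order in which the two conditions are used, and that does not matter.
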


\begin{proof}
Suppose, to the contrary, that $\tau(i,j)=(i,j')$
for some $j'$. Since $\tau$ is an involution, it follows that $\tau(i,j')=(i,j)$. Consequently, we have $ \varphi(i,j)=\varphi(i,j')=i$. 
On the other hand, condition~\textup{(R1)} implies that $\varphi$ is
strictly decreasing, and hence injective, along the $i$-th block.
Therefore, $j=j'$. It follows that $\tau(i,j)=(i,j)$, contradicting
condition~\textup{(R2)}.
\end{proof}

The following lemma combines \cite[Lemma~8.2]{MOS} with its dual. We
include the short proofs for completeness.

\begin{lemma}\label{lem:firstlast}
\noindent
\begin{enumerate}[label=(\alph*)]
 \item There exist indices $k\geq i_1>i_2>\cdots>i_{k_1}>1$
    such that $\tau(1,j)=(i_j,k_{i_j})$, where $1\leq j\leq k_1$. 
    Thus, the pieces of the first block are paired with the bottom
    pieces of later blocks.

    \item There exist indices $1\leq i'_{k_k}<\cdots<i'_2<i'_1<k$
    such that $\tau(k,j)=(i'_j,1)$, where $1\leq j\leq k_k$. Thus, the pieces of the last block are paired with the top pieces
    of earlier blocks.
\end{enumerate}
\end{lemma}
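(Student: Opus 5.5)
The proof of (a) rests on two facts: block $1$ is lexicographically minimal in $I$, and $\varphi$ is strictly decreasing along every block by (R1).

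\textbf{Part (a).} Fix $1\leq j\leq k_1$ and write $\tau(1,j)=(i,j')$. By Lemma~\ref{lem:blocks}, $i\neq 1$, so $i>1$ and $(1,j)\prec_{\mathrm{lex}}(i,j')$.

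The key step is to show $j'=k_i$. Since $\tau$ is an involution, $\varphi(i,j')=\pr_1(1,j)=1$. Suppose $j'<k_i$. Then (R1) gives $\varphi(i,j'+1)<\varphi(i,j')=1$, which is impossible because $\varphi$ takes values in $\{1,\ldots,k\}$. Hence $j'=k_i$. Setting $i_j=i$, we get $\tau(1,j)=(i_j,k_{i_j})$ with $1<i_j\leq k$.

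Monotonicity in $j$ is exactly (R1) applied to the first block: $\varphi(1,j+1)<\varphi(1,j)$, that is, $i_{j+1}<i_j$. This yields $k\geq i_1>i_2>\cdots>i_{k_1}>1$.

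\textbf{Part (b).} The argument is symmetric, using the last block and the minimum of block indices. Fix $j$ and write $\tau(k,j)=(i',j')$. By Lemma~\ref{lem:blocks}, $i'<k$. Then $\varphi(i',j')=k$ is the maximal possible value. If $j'>1$, (R1) forces $\varphi(i',j'-1)>\varphi(i',j')=k$, which is impossible. So $j'=1$. Strict decrease of $\varphi$ along block $k$ gives $i'_{j+1}<i'_j$, which produces the chain $1\leq i'_{k_k}<\cdots<i'_1<k$.

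Neither part uses (R3) or standardness. The only step needing care is the endpoint argument ($j'=k_i$, respectively $j'=1$), and it follows directly from the extremality of the value $1$ (respectively $k$) together with (R1).
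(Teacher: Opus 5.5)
Your proof is correct and follows essentially the same argument as the paper. It uses Lemma~\ref{lem:blocks} to rule out pairing within the same block, (R1) along block $1$ (resp.\ block $k$) for strict monotonicity of the partner blocks, and the extremality of the value $1$ (resp.\ $k$) of $\varphi$ to force the partner to be a bottom (resp.\ top) piece.
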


\begin{proof}
For part~\textup{(a)}, write $\tau(1,j)=(i_j,r_j)$. 
By condition~\textup{(R1)}, the map $\varphi$ is strictly decreasing
along the first block. Hence, we have $i_1>i_2>\cdots>i_{k_1}$. 
Moreover, Lemma~\ref{lem:blocks} implies that $i_j\neq 1$ for every
$j$, and therefore $i_{k_1}>1$. 
We claim that $r_j=k_{i_j}$. Suppose instead that
$r_j<k_{i_j}$. Applying condition~\textup{(R1)} to the $i_j$-th block
at the index $r_j$, we obtain
\[
    \tau(i_j,r_j+1)\ll\tau(i_j,r_j).
\]
Since $\tau$ is an involution and
$\tau(1,j)=(i_j,r_j)$, we have $ \tau(i_j,r_j)=(1,j)$. 
It follows that
\[
    \varphi(i_j,r_j+1)
    <
    \varphi(i_j,r_j)
    =
    1,
\]
which is impossible. Therefore, we get $r_j=k_{i_j}$, as required.

For part~\textup{(b)}, write $ \tau(k,j)=(i'_j,r'_j)$. 
Condition~\textup{(R1)} gives $ i'_1>i'_2>\cdots>i'_{k_k}$. By Lemma~\ref{lem:blocks}, one has $i'_j\neq k$. Since
$i'_j\leq k$, it follows that $i'_j<k$. 
We claim that $r'_j=1$. Suppose instead that $r'_j>1$. Applying
condition~\textup{(R1)} to the $i'_j$-th block at the index
$r'_j-1$, we obtain
\[
    \varphi(i'_j,r'_j)
    <
    \varphi(i'_j,r'_j-1).
\]
Since $\tau$ is an involution and
$\tau(k,j)=(i'_j,r'_j)$, we have $ \tau(i'_j,r'_j)=(k,j)$, 
and hence $\varphi(i'_j,r'_j)=k$. 
Therefore, $k < \varphi(i'_j,r'_j-1) \leq k$, 
a contradiction. Thus, we have $r'_j=1$.
\end{proof}

We next extract the numerical consequences of
Lemma~\ref{lem:firstlast}. Recall that $\Delta_{i,1}$ is the top piece
and $\Delta_{i,k_i}$ is the bottom piece of $\Delta_i$. Therefore, $\ee(\Delta_{i,1})=\ee(\Delta_i)$,  $\bb(\Delta_{i,k_i})=\bb(\Delta_i)$, and 
$$\bb(\Delta_{i,1})
    >
    \bb(\Delta_{i,2})
    >
    \cdots
    >
    \bb(\Delta_{i,k_i}).$$

\begin{lemma}\label{lem:first}
With the notation of Lemma~\ref{lem:firstlast}\textup{(a)}, for every
$j\in\{1,\ldots,k_1\}$, one has $\Delta_{i_j,k_{i_j}} =
    \nu^{-1}\Delta_{1,j}$, and consequently $\bb(\Delta_{i_j})
    =
    \bb(\Delta_{1,j})-1$ and $\ee(\Delta_{i_j})
    \geq
    \ee(\Delta_{1,j})-1$.
In particular, $\bb(\Delta_{i_1})
    >
    \bb(\Delta_{i_2})
    >
    \cdots
    >
    \bb(\Delta_{i_{k_1}})
    =
    \bb(\Delta_1)-1$, 
and $\ee(\Delta_{i_1})
    \geq
    \ee(\Delta_1)-1$. 
In particular, $\mathfrak{m}$ contains a segment whose beginning is
$\bb(\Delta_1)-1$.
\end{lemma}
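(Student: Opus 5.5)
The argument goes through condition (R3), applied to the pairs produced by Lemma~\ref{lem:firstlast}\textup{(a)}. Fix $j\in\{1,\ldots,k_1\}$. By that lemma, $\tau(1,j)=(i_j,k_{i_j})$ with $i_j>1$. Since indices are compared lexicographically, $(1,j)\prec_{\mathrm{lex}}(i_j,k_{i_j})$. Condition~\textup{(R3)} therefore gives
\[
\Delta_{1,j}=\nu\Delta_{i_j,k_{i_j}},
\qquad\text{that is,}\qquad
\Delta_{i_j,k_{i_j}}=\nu^{-1}\Delta_{1,j}.
\]
This is the first claim. Because $\Delta_{i_j,k_{i_j}}$ is the bottom piece of $\Delta_{i_j}$, we get $\bb(\Delta_{i_j})=\bb(\Delta_{i_j,k_{i_j}})=\bb(\Delta_{1,j})-1$. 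Moreover, every piece of a decomposition is contained in the segment it decomposes, so
\[
\ee(\Delta_{i_j})\geq \ee(\Delta_{i_j,k_{i_j}})=\ee(\Delta_{1,j})-1.
\]

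For the chain of beginnings, recall the strict inequalities $\bb(\Delta_{1,1})>\bb(\Delta_{1,2})>\cdots>\bb(\Delta_{1,k_1})$ stated just before the lemma. Subtracting $1$ from each term and using $\bb(\Delta_{i_j})=\bb(\Delta_{1,j})-1$ gives the strictly decreasing chain $\bb(\Delta_{i_1})>\cdots>\bb(\Delta_{i_{k_1}})$. Its last term is $\bb(\Delta_{1,k_1})-1=\bb(\Delta_1)-1$, because the bottom piece of $\Delta_1$ has the same beginning as $\Delta_1$.

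For the endpoint bound, take $j=1$. Then $\ee(\Delta_{i_1})\geq\ee(\Delta_{1,1})-1=\ee(\Delta_1)-1$, since the top piece of $\Delta_1$ has the same endpoint as $\Delta_1$. Finally, $\Delta_{i_{k_1}}$ is a segment of $\mathfrak m$ whose beginning is $\bb(\Delta_1)-1$. This requires $k_1\geq1$, which always holds.

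There is no real obstacle here; the proof is direct bookkeeping. The only point needing care is the direction of (R3): it must be applied from the lexicographically smaller index $(1,j)$, which is what the inequality $i_j>1$ from Lemma~\ref{lem:firstlast}\textup{(a)} guarantees.
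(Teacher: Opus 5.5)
Your proposal is correct and follows the paper's proof essentially step for step. You apply (R3) at the lexicographically smaller index $(1,j)$, use the bottom and top pieces to read off the beginning and endpoint relations, and use the strict decrease of the beginnings of the pieces of $\Delta_1$ for the chain.
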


\begin{proof}
Since $i_j>1$, we have $(1,j)\prec_{\mathrm{lex}}(i_j,k_{i_j}) =
    \tau(1,j)$. Therefore, Condition~\textup{(R3)} gives $ \Delta_{1,j} = \nu\Delta_{i_j,k_{i_j}}$, 
or equivalently, $ \Delta_{i_j,k_{i_j}} = \nu^{-1}\Delta_{1,j}$. Since $\Delta_{i_j,k_{i_j}}$ is the bottom piece of $\Delta_{i_j}$,
we obtain
\begin{align*}
    \bb(\Delta_{i_j})
    &=
    \bb(\Delta_{i_j,k_{i_j}})
    =
    \bb(\Delta_{1,j})-1,\\
    \ee(\Delta_{i_j})
    &\geq
    \ee(\Delta_{i_j,k_{i_j}})
    =
    \ee(\Delta_{1,j})-1.
\end{align*}
Because the beginnings of the pieces of $\Delta_1$ strictly decrease
with $j$, it follows that
\[
    \bb(\Delta_{i_1})
    >
    \cdots
    >
    \bb(\Delta_{i_{k_1}}).
\]
Finally, $\bb(\Delta_{1,k_1})=\bb(\Delta_1)$ and $\ee(\Delta_{1,1})=\ee(\Delta_1)$, 
which yield the remaining assertions.
\end{proof}

\begin{lemma}\label{lem:last}
With the notation of Lemma~\ref{lem:firstlast}\textup{(b)}, for every
$j\in\{1,\ldots,k_k\}$, one has $ \Delta_{i'_j,1} = \nu\Delta_{k,j}$, and consequently 
$\ee(\Delta_{i'_j}) = \ee(\Delta_{k,j})+1$ and $\bb(\Delta_{i'_j})
    \leq
    \bb(\Delta_{k,j})+1$. 
In particular, $\ee(\Delta_{i'_1}) = \ee(\Delta_k)+1$.
Thus, $\mathfrak{m}$ contains a segment whose endpoint is
$\ee(\Delta_k)+1$.
\end{lemma}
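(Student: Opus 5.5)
The argument mirrors the proof of Lemma~\ref{lem:first}, with the roles of the first and last blocks exchanged. We use the indices supplied by Lemma~\ref{lem:firstlast}\textup{(b)}. Fix $j\in\{1,\ldots,k_k\}$; then $\tau(k,j)=(i'_j,1)$ with $i'_j<k$. Since $\tau$ is an involution, $\tau(i'_j,1)=(k,j)$. Because $i'_j<k$, we have $(i'_j,1)\prec_{\mathrm{lex}}(k,j)=\tau(i'_j,1)$. Condition~\textup{(R3)}, applied at the lexicographically smaller index $(i'_j,1)$, then gives
\[
\Delta_{i'_j,1}=\nu\Delta_{\tau(i'_j,1)}=\nu\Delta_{k,j}.
\]
This is the first assertion. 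Note that one must apply (R3) at $(i'_j,1)$ rather than at $(k,j)$, since $(k,j)$ is the lexicographically larger index of the pair.

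Next we translate this identity into statements about the original segments. Since $\Delta_{i'_j,1}$ is the top piece of $\Delta_{i'_j}$, we have $\ee(\Delta_{i'_j})=\ee(\Delta_{i'_j,1})=\ee(\nu\Delta_{k,j})=\ee(\Delta_{k,j})+1$. Every piece is contained in its segment, so $\bb(\Delta_{i'_j})\leq\bb(\Delta_{i'_j,1})=\bb(\Delta_{k,j})+1$.

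For the final claims, take $j=1$. The top piece $\Delta_{k,1}$ satisfies $\ee(\Delta_{k,1})=\ee(\Delta_k)$, so $\ee(\Delta_{i'_1})=\ee(\Delta_k)+1$. Since $\Delta_{i'_1}$ is a segment of $\mathfrak m$, it follows that $\mathfrak m$ contains a segment with endpoint $\ee(\Delta_k)+1$.

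No step here is a real obstacle; everything follows directly from Lemma~\ref{lem:firstlast}\textup{(b)} and the definition of a relevant decomposition.
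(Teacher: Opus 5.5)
Your proposal is correct and follows the paper's proof essentially verbatim: you apply (R3) at the lexicographically smaller index $(i'_j,1)$ using $\tau(i'_j,1)=(k,j)$. You then read off the endpoint from the top piece, bound the beginning by containment, and specialize to $j=1$.
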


\begin{proof}
Since $i'_j<k$, we have $ (i'_j,1) \prec_{\mathrm{lex}} (k,j)$. 
Moreover, $\tau(i'_j,1)=(k,j)$,
because $\tau$ is an involution. Therefore, Condition~\textup{(R3)} gives $\Delta_{i'_j,1} =
    \nu\Delta_{k,j}$. Since $\Delta_{i'_j,1}$ is the top piece of $\Delta_{i'_j}$, we obtain
\begin{align*}
    \ee(\Delta_{i'_j})
    &=
    \ee(\Delta_{i'_j,1})
    =
    \ee(\Delta_{k,j})+1,\\
    \bb(\Delta_{i'_j})
    &\leq
    \bb(\Delta_{i'_j,1})
    =
    \bb(\Delta_{k,j})+1.
\end{align*}
For $j=1$, the segment $\Delta_{k,1}$ is the top piece of $\Delta_k$, so $\ee(\Delta_{k,1})=\ee(\Delta_k)$. 
It follows that $\ee(\Delta_{i'_1}) = \ee(\Delta_k)+1$.
\end{proof}

Lemmas~\ref{lem:first} and~\ref{lem:last} have the following global
consequences.

\begin{corollary}\label{cor:sameend}
Let $\mathfrak{m}\neq 0$.
\begin{enumerate}[label=(\alph*)]
    \item If all segments of $\mathfrak{m}$ have the same endpoint,
    then no standard order of $\mathfrak{m}$ admits a relevant
    decomposition. The same conclusion holds if all segments of
    $\mathfrak{m}$ have the same beginning.

    \item If $\mathfrak{m}$ is distinguished, then the following
    statements hold:
    \begin{enumerate}[label=(\roman*)]
        \item for every segment $\Gamma\in\mathfrak{m}$ having maximal
        endpoint, $\mathfrak{m}$ contains a segment whose beginning is
        $\bb(\Gamma)-1$;

        \item for every segment $\Gamma\in\mathfrak{m}$ having minimal
        beginning, $\mathfrak{m}$ contains a segment whose endpoint is
        $\ee(\Gamma)+1$.
    \end{enumerate}
\end{enumerate}
\end{corollary}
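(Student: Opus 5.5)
The plan is to derive everything from Lemmas~\ref{lem:first} and~\ref{lem:last}, applied to suitably chosen standard orders provided by Lemma~\ref{lem:orders}.

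For part~(a), suppose all segments of $\mathfrak{m}$ have the same endpoint $c$. Take any standard order $\mathcal{O}=\{\Delta_1,\ldots,\Delta_k\}$ and suppose, for contradiction, that it admits a relevant decomposition. Lemma~\ref{lem:last} then produces a segment of $\mathfrak{m}$ with endpoint $\ee(\Delta_k)+1=c+1$, which is impossible. Alternatively, Lemma~\ref{lem:firstlast}(a) handles the case $k=1$ directly: a single block cannot be paired anywhere, because Lemma~\ref{lem:blocks} forbids pairings inside a block. In general, Lemma~\ref{lem:last} needs $k\geq 2$, but Lemma~\ref{lem:firstlast}(b) already forces $i'_1<k$ with $i'_1\geq 1$, so $k=1$ is contradictory in any case. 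If instead all segments have the same beginning $a$, apply Lemma~\ref{lem:first} to the first block. It yields a segment $\Delta_{i_1}$, or rather $\Delta_{i_{k_1}}$, with beginning $\bb(\Delta_1)-1=a-1$, which is again impossible.

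For part~(b)(i), let $\Gamma\in\mathfrak{m}$ have maximal endpoint. By Lemma~\ref{lem:orders}(b), there is a standard order with $\Delta_1=\Gamma$. Since $\mathfrak{m}$ is distinguished, this order admits a relevant decomposition. Lemma~\ref{lem:first} then gives $\bb(\Delta_{i_{k_1}})=\bb(\Delta_1)-1=\bb(\Gamma)-1$, so $\mathfrak{m}$ contains a segment with this beginning.

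For part~(b)(ii), let $\Gamma$ have minimal beginning. Lemma~\ref{lem:orders}(b) gives a standard order ending in $\Delta_k=\Gamma$, and distinction supplies a relevant decomposition for it. Lemma~\ref{lem:last} then yields $\ee(\Delta_{i'_1})=\ee(\Gamma)+1$.

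The only subtle point is $k=1$. That case is covered, because Lemma~\ref{lem:firstlast} requires $i_{k_1}>1$ and $i_1\leq k$, which is impossible when $k=1$. Note also that $\mathfrak{m}\neq 0$ is needed so that $\Gamma$ and the blocks exist. Otherwise, everything is a direct application of the earlier lemmas.
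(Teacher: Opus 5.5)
Your proposal is correct and follows the paper's proof. Part (a) applies Lemma~\ref{lem:last} to the last block, so a common endpoint $c$ would force a segment ending at $c+1$, and Lemma~\ref{lem:first} to the first block, so a common beginning $a$ would force one beginning at $a-1$; part (b) combines Lemma~\ref{lem:orders}(b) with distinguishedness and the same two lemmas. Your extra case analysis for $k=1$ is harmless but unnecessary, since Lemma~\ref{lem:blocks} already rules out any relevant decomposition of a one-block order.
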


\begin{proof}
Suppose first that all segments of $\mathfrak{m}$ have the same
endpoint $c$. If some standard order of $\mathfrak{m}$ admitted a
relevant decomposition, then Lemma~\ref{lem:last}, applied to its last
block, would produce a segment of $\mathfrak{m}$ with endpoint $c+1$.
This contradicts the assumption that all endpoints are equal to $c$.
Similarly, suppose that all segments of $\mathfrak{m}$ have the same
beginning $a$. If some standard order admitted a relevant
decomposition, then Lemma~\ref{lem:first}, applied to its first block,
would produce a segment of $\mathfrak{m}$ with beginning $ \bb(\Delta_1)-1=a-1$, 
again a contradiction. This proves part~\textup{(a)}.

For part~\textup{(b)}, let $\Gamma\in\mathfrak{m}$ have a maximal endpoint. By Lemma~\ref{lem:orders}\textup{(b)}, there exists a standard
order of $\mathfrak{m}$ beginning with $\Gamma$. Since
$\mathfrak{m}$ is distinguished, this order admits a relevant
decomposition. Lemma~\ref{lem:first} then produces a segment of
$\mathfrak{m}$ whose beginning is $\bb(\Gamma)-1$.

The second assertion is dual. If $\Gamma$ has minimal beginning, then
Lemma~\ref{lem:orders}\textup{(b)} provides a standard order ending
with $\Gamma$. Since this order admits a relevant decomposition,
Lemma~\ref{lem:last} produces a segment of $\mathfrak{m}$ whose endpoint
is $\ee(\Gamma)+1$.
\end{proof}

\section{Duality and invariance of relevance}\label{sec:duality}

Recall that $\Delta^\vee = [-\ee(\Delta),-\bb(\Delta)]$
and that the dual multisegment $\mathfrak{m}^\vee$ is defined by  $$\mathfrak{m}^\vee(\Delta) =
    \mathfrak{m}(\Delta^\vee).$$
\begin{lemma}\label{lem:dualprec}
Let $\Delta,\Delta'\in\Sgm$. Then $\Delta\prec\Delta'$ if and only if $(\Delta')^\vee\prec\Delta^\vee$. 
\end{lemma}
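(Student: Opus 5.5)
Write $\Delta=[a,b]$ and $\Delta'=[a',b']$, so that $\Delta^\vee=[-b,-a]$ and $(\Delta')^\vee=[-b',-a']$. The plan is a direct unwinding of the definition of $\prec$ on both sides, followed by a term-by-term comparison of the three defining inequalities.

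By definition, $(\Delta')^\vee\prec\Delta^\vee$ means
\[
    \bb\bigl((\Delta')^\vee\bigr)<\bb(\Delta^\vee),
    \qquad
    \ee\bigl((\Delta')^\vee\bigr)<\ee(\Delta^\vee),
    \qquad
    \ee\bigl((\Delta')^\vee\bigr)\geq \bb(\Delta^\vee)-1 .
\]
Substituting the endpoints, these become $-b'<-b$, $-a'<-a$, and $-a'\geq -b-1$. Equivalently, they read $b<b'$, $a<a'$, and $b\geq a'-1$.

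These are exactly the three conditions defining $\Delta\prec\Delta'$. The first two conditions of $\prec$ swap roles under duality: the condition on beginnings becomes the condition on endpoints and vice versa. The third (linking) condition maps to itself after multiplying by $-1$. Hence the two relations are equivalent, which gives both implications at once.

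There is no real obstacle here. The only point requiring care is keeping track of which entry of the dual segment is the beginning and which is the endpoint, since $\vee$ interchanges them with a sign change. It is also worth noting that the order of $\Delta$ and $\Delta'$ is reversed, and this is forced by that interchange. Finally, since $\vee$ is an involution by \eqref{eq:nudual}, one could alternatively prove only one implication and deduce the other by applying it to $(\Delta')^\vee,\Delta^\vee$. The direct computation, however, already yields the equivalence.
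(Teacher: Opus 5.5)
Your proposal is correct and is essentially the paper's own proof. Both write $\Delta=[a,b]$ and $\Delta'=[a',b']$, translate $(\Delta')^\vee\prec\Delta^\vee$ into $-b'<-b$, $-a'<-a$, $-a'\geq -b-1$, and observe that these are exactly $b<b'$, $a<a'$, $b\geq a'-1$, the conditions defining $\Delta\prec\Delta'$.
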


\begin{proof}
Write $\Delta=[a,b]$ and $\Delta'=[a',b']$. 
Then we have $ (\Delta')^\vee=[-b',-a']$ and $\Delta^\vee=[-b,-a]$. The relation $(\Delta')^\vee\prec\Delta^\vee$
is equivalent to $-b'<-b$, $-a'<-a$, and $-a'\geq -b-1$. These inequalities are equivalent, respectively, to
$b<b'$, $a<a'$, and $b\geq a'-1$, 
which are precisely the conditions defining
$\Delta\prec\Delta'$.
\end{proof}

\begin{lemma}\label{lem:dualdec}
A tuple $(\Delta_1,\ldots,\Delta_k)$ is a decomposition of $\Delta$ if
and only if $ (\Delta_k^\vee,\ldots,\Delta_1^\vee)$
is a decomposition of $\Delta^\vee$.
\end{lemma}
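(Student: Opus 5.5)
The plan is to verify the three defining conditions of a decomposition directly, using only that duality exchanges beginnings and endpoints with a sign change: $\bb(\Gamma^\vee)=-\ee(\Gamma)$ and $\ee(\Gamma^\vee)=-\bb(\Gamma)$ for every segment $\Gamma$. Since $\vee$ is an involution on $\Sgm$ by \eqref{eq:nudual}, and reversing a tuple is also an involution, it suffices to prove one implication. Applying that implication to $(\Delta_k^\vee,\ldots,\Delta_1^\vee)$ and $\Delta^\vee$ then returns the original tuple $(\Delta_1,\ldots,\Delta_k)$ and $\Delta$, which gives the converse.

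So assume $(\Delta_1,\ldots,\Delta_k)$ is a decomposition of $\Delta$, and set $\Gamma_i=\Delta_{k+1-i}^\vee$ for $1\le i\le k$. The first condition holds because
\[
\ee(\Gamma_1)=\ee(\Delta_k^\vee)=-\bb(\Delta_k)=-\bb(\Delta)=\ee(\Delta^\vee).
\]
The second condition holds because
\[
\bb(\Gamma_k)=\bb(\Delta_1^\vee)=-\ee(\Delta_1)=-\ee(\Delta)=\bb(\Delta^\vee).
\]
For the chaining condition, fix $1\le i\le k-1$ and put $m=k-i$, so that $\Gamma_i=\Delta_{m+1}^\vee$ and $\Gamma_{i+1}=\Delta_m^\vee$. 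We need $\ee(\Gamma_{i+1})=\bb(\Gamma_i)-1$, which reads
\[
-\bb(\Delta_m)=-\ee(\Delta_{m+1})-1 .
\]
This is exactly the relation $\ee(\Delta_{m+1})=\bb(\Delta_m)-1$ from the original decomposition, with $1\le m\le k-1$.

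There is no real obstacle here. The only point requiring care is the reindexing in the chaining condition, which must match consecutive indices after reversal. One should also note that each $\Gamma_i$ is again a genuine segment, since duality maps $\Sgm$ to itself, and that the length $k\ge1$ is unchanged.
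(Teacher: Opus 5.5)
Your proposal is correct and follows essentially the same route as the paper. Like the paper, you check the endpoint, beginning and chaining conditions directly, using the reindexing $m=k-i$ after reversal. You then get the converse by applying the same argument to $\Delta^\vee$ and using $\Delta^{\vee\vee}=\Delta$.
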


\begin{proof}
Write $\Delta_i=[a_i,b_i]$. 
Since $(\Delta_1,\ldots,\Delta_k)$ is a decomposition of $\Delta$, one has $b_1=\ee(\Delta$, $ a_k=\bb(\Delta)$, and $b_{i+1}=a_i-1$, where $1\leq i\leq k-1$. The reversed dual tuple is
\[
    ([-b_k,-a_k],\ldots,[-b_1,-a_1]).
\]
Its first term has an endpoint $-a_k=-\bb(\Delta)=\ee(\Delta^\vee)$, 
and its last term has a beginning $ -b_1=-\ee(\Delta)=\bb(\Delta^\vee)$. 
For $1\leq r\leq k-1$, the endpoint of the $(r+1)$-st term is 
$$-a_{k-r} = -(b_{k-r+1}+1) = -b_{k-r+1}-1,$$
which is one less than the beginning of the $r$-th term. Hence, the
reversed dual tuple is a decomposition of $\Delta^\vee$.
The converse follows by applying the same argument to $\Delta^\vee$ and using $ \Delta^{\vee\vee}=\Delta$.
\end{proof}

\begin{theorem}\label{thm:duality}
Let $\mathfrak{m}\in\Oz$, and let $ \mathcal{O}=(\Delta_1,\ldots,\Delta_k)$
be an order on $\mathfrak{m}$. Define $ \mathcal{O}^\vee := (\Delta_k^\vee,\ldots,\Delta_1^\vee)$, 
which is an order on $\mathfrak{m}^\vee$. Then the following statements
hold:
\begin{enumerate}[label=(\alph*)]
    \item The order $\mathcal{O}$ is standard if and only if
    $\mathcal{O}^\vee$ is standard. Moreover, the assignment $\mathcal{O}\longmapsto\mathcal{O}^\vee$
    defines a bijection between the standard orders of $\mathfrak{m}$
    and those of $\mathfrak{m}^\vee$.

    \item The order $\mathcal{O}$ admits a relevant decomposition if
    and only if $\mathcal{O}^\vee$ admits a relevant decomposition.
    This correspondence preserves triviality.
\end{enumerate}
Consequently, $\mathfrak{m}$ is distinguished if and only if $\mathfrak{m}^\vee$ is distinguished. 
\end{theorem}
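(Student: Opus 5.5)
Part (a) is a direct consequence of Lemma~\ref{lem:dualprec}. In $\mathcal{O}^\vee$ the $r$-th term is $\Delta_{k+1-r}^\vee$. A violation of standardness in $\mathcal{O}^\vee$ is a pair $r<s$ with $\Delta_{k+1-r}^\vee\prec\Delta_{k+1-s}^\vee$. Setting $i=k+1-s<j=k+1-r$, the lemma shows this relation is equivalent to $\Delta_i\prec\Delta_j$, which is exactly a violation in $\mathcal{O}$. Since $\vee$ is an involution on $\Sgm$ and reversing twice is the identity, $\mathcal{O}\mapsto\mathcal{O}^\vee$ is its own inverse. It is therefore a bijection between orders of $\mathfrak{m}$ and orders of $\mathfrak{m}^\vee$, and it preserves standardness.

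For (b) I transport decompositions and involutions explicitly. Given a decomposition $(\Delta_{i,1},\dots,\Delta_{i,k_i})$ of each $\Delta_i$, Lemma~\ref{lem:dualdec} gives the decomposition $(\Delta_{i,k_i}^\vee,\dots,\Delta_{i,1}^\vee)$ of $\Delta_i^\vee$, which occupies position $k+1-i$ in $\mathcal{O}^\vee$. The dual index set $I^\vee$ is in bijection with $I$ via
\[
\theta(i,j)=(k+1-i,\;k_i+1-j),
\]
and the piece of the dual decomposition at $\theta(\imath)$ is $\Delta_\imath^\vee$. Given $\tau$ satisfying (R1)--(R3), define $\tau^\vee=\theta\circ\tau\circ\theta^{-1}$. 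This is an involution, and it is fixed-point-free because $\tau$ is, so (R2) holds.

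For (R1), write $\varphi^\vee=\pr_1\circ\tau^\vee$. Then $\varphi^\vee(\theta(i,j))=k+1-\varphi(i,j)$. Along the dual block $k+1-i$, increasing the second coordinate corresponds to decreasing $j$. Since $\varphi$ strictly decreases in $j$, the quantity $k+1-\varphi$ strictly decreases along the dual block, which is (R1).

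For (R3), take a pair $\{\imath,\tau(\imath)\}$ with $\imath\prec_{\mathrm{lex}}\tau(\imath)$. By Lemma~\ref{lem:blocks} the two indices lie in different blocks, so the lexicographic comparison is decided by the block coordinate alone, and $\pr_1\imath<\pr_1\tau(\imath)$. The map $\theta$ reverses block order, so $\theta(\tau(\imath))\prec_{\mathrm{lex}}\theta(\imath)$ in $I^\vee$. Condition (R3) for $\tau^\vee$ then demands $\Delta_{\tau(\imath)}^\vee=\nu\Delta_\imath^\vee$. From $\Delta_\imath=\nu\Delta_{\tau(\imath)}$ and \eqref{eq:nudual} we get $\Delta_\imath^\vee=\nu^{-1}\Delta_{\tau(\imath)}^\vee$, which is exactly the required identity.

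This shows that a relevant decomposition of $\mathcal{O}$ yields one of $\mathcal{O}^\vee$. The converse follows by applying the same construction to $\mathcal{O}^\vee$, using $(\mathcal{O}^\vee)^\vee=\mathcal{O}$. Triviality is preserved because the number of pieces of each segment is unchanged.

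The consequence follows by combining (a) and (b). If $\mathfrak{m}$ is distinguished, let $\mathcal{O}'$ be any standard order of $\mathfrak{m}^\vee$. By (a), $\mathcal{O}'=\mathcal{O}^\vee$ for a standard order $\mathcal{O}$ of $\mathfrak{m}$. That order admits a relevant decomposition, hence so does $\mathcal{O}'$ by (b). The reverse implication is symmetric. The only delicate point I expect is the (R3) bookkeeping, namely that lexicographic order flips under $\theta$. This is handled by Lemma~\ref{lem:blocks}, which guarantees that paired indices never share a block.
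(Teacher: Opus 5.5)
Your proof is correct and matches the paper's argument: you use the same index bijection $(i,j)\mapsto(k+1-i,k_i+1-j)$, the same conjugated involution $\tau^\vee$, and the same verifications of (R1)--(R3), including the identity $(\nu\Delta)^\vee=\nu^{-1}\Delta^\vee$. The one difference is minor. You invoke Lemma~\ref{lem:blocks} to obtain the lexicographic flip, whereas the paper simply notes that this bijection reverses the whole lexicographic order, including within a block, so that lemma is not needed; note also that Lemma~\ref{lem:blocks} is stated under a standing standardness assumption while part (b) concerns arbitrary orders, though its proof uses only (R1) and (R2), so your use of it is still valid.
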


\begin{proof}
For part~\textup{(a)}, write $\Gamma_p=\Delta_{k+1-p}^\vee$ for $1\leq p\leq k$, so that $\mathcal{O}^\vee=(\Gamma_1,\ldots,\Gamma_k)$. 
If $p<q$, then Lemma~\ref{lem:dualprec} gives $\Gamma_p\prec\Gamma_q$ if and only if $ \Delta_{k+1-q}\prec\Delta_{k+1-p}$. 
Since $k+1-q<k+1-p$, 
the standard condition for $\mathcal{O}^\vee$ is equivalent to
that for $\mathcal{O}$. Moreover, the assignment
$\mathcal{O}\mapsto\mathcal{O}^\vee$ is involutive and is therefore a
bijection.

For part~\textup{(b)}, suppose that $\mathcal{O}$ admits a relevant decomposition $\{\Delta_{i,j}:(i,j)\in I\}$, 
and let $\tau$ be an involution satisfying conditions
\textup{(R1)}--\textup{(R3)}.
By Lemma~\ref{lem:dualdec}, the reversed dual tuples $(\Delta_{i,k_i}^\vee,\ldots,\Delta_{i,1}^\vee)$
define a decomposition of $\mathcal{O}^\vee$. In this decomposition,
the block corresponding to $\Delta_i^\vee$ has index $k+1-i$ and contains $k_i$ pieces.
Let $I^\vee$ denote the corresponding index set, and define
\[
    \psi\colon I\longrightarrow I^\vee,
    \qquad
    \psi(i,j)
    =
    (k+1-i,k_i+1-j).
\]
If $\widetilde{\Delta}_{\psi(i,j)}$ denotes the piece of the dual
decomposition indexed by $\psi(i,j)$, then $\widetilde{\Delta}_{\psi(i,j)} = \Delta_{i,j}^\vee$. 
Furthermore, $\pr_1\circ\psi = k+1-\pr_1$, 
and $\psi$ reverses the lexicographic order.
Define
$$\tau^\vee := \psi\circ\tau\circ\psi^{-1}.$$
Then $\tau^\vee$ is an involution of $I^\vee$. Since $\tau$ has no
fixed points and $\psi$ is bijective, $\tau^\vee$ has no fixed points.
Thus, Condition~\textup{(R2)} holds.

\smallskip
\noindent\emph{Verification of condition~\textup{(R1)}.}
Fix a block $i'=k+1-i$
of the dual decomposition and let $1\leq j'\leq k_i-1$. Put $u=k_i+1-j'$. Then
\[
    \psi^{-1}(i',j')=(i,u),
    \qquad
    \psi^{-1}(i',j'+1)=(i,u-1).
\]
Writing $\varphi=\pr_1\circ\tau$, we obtain
\begin{align*}
    &\pr_1\bigl(\tau^\vee(i',j'+1)\bigr)
    <
    \pr_1\bigl(\tau^\vee(i',j')\bigr)\\
    &\quad\Longleftrightarrow\quad
    k+1-\varphi(i,u-1)
    <
    k+1-\varphi(i,u)\\
    &\quad\Longleftrightarrow\quad
    \varphi(i,u)<\varphi(i,u-1).
\end{align*}
The last inequality is precisely condition~\textup{(R1)} for $\tau$
at the index $(i,u-1)$.

\smallskip
\noindent\emph{Verification of condition~\textup{(R3)}.}
Let $\imath\in I^\vee$ satisfy $\imath
    \prec_{\mathrm{lex}}
    \tau^\vee(\imath)$, 
and write $\imath=\psi(x)$ for some $x\in I$. Since $\tau^\vee(\psi(x))=\psi(\tau(x))$
and $\psi$ reverses the lexicographic order, the preceding inequality
is equivalent to $\tau(x)\prec_{\mathrm{lex}}x$. 
Applying condition~\textup{(R3)} for $\tau$ to the index $\tau(x)$ gives $\Delta_{\tau(x)} = \nu\Delta_x$. Using $(\nu\Delta_x)^\vee =
    \nu^{-1}\Delta_x^\vee$, we obtain
$$\nu\widetilde{\Delta}_{\tau^\vee(\imath)} = \nu\widetilde{\Delta}_{\psi(\tau(x))} = \nu\Delta_{\tau(x)}^\vee = \nu(\nu\Delta_x)^\vee = \nu\nu^{-1}\Delta_x^\vee = \Delta_x^\vee = \widetilde{\Delta}_{\imath}.$$
Thus, Condition~\textup{(R3)} holds for $\tau^\vee$. Therefore, the
dual decomposition is relevant to $\mathcal{O}^\vee$.

The construction is involutive, so the converse follows by applying
the same argument to $\mathcal{O}^\vee$. Moreover, each block and its
corresponding dual block have the same number of pieces. Hence, the
correspondence preserves triviality.

Finally, part~\textup{(a)} gives a bijection between the standard orders
of $\mathfrak{m}$ and those of $\mathfrak{m}^\vee$, while
part~\textup{(b)} preserves the existence of relevant decompositions.
The final assertion therefore follows from the definition of
distinguishedness.
\end{proof}

\begin{lemma}\label{lem:spehdual}
A multisegment $\mathfrak{m}$ is of Speh type if and only if
$\mathfrak{m}^\vee$ is of Speh type.
\end{lemma}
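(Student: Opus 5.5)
The plan is to prove the statement directly from the definition, by checking how duality interacts with the Speh decomposition $\mathfrak m=\mathfrak n+\nu\mathfrak n$. The only tool needed is the identity \eqref{eq:nudual}, $(\nu\Delta)^\vee=\nu^{-1}(\Delta^\vee)$, together with the fact that $\vee$ is an involution on $\Sgm$.

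First, I will record how $\vee$ acts on multisegments. It is additive, $(\mathfrak a+\mathfrak b)^\vee=\mathfrak a^\vee+\mathfrak b^\vee$, since both sides evaluated at $\Delta$ equal $\mathfrak a(\Delta^\vee)+\mathfrak b(\Delta^\vee)$. I will also show that
\[
(\nu\mathfrak n)^\vee=\nu^{-1}(\mathfrak n^\vee).
\]
Evaluating at $\Delta$, the left side is $\mathfrak n(\nu^{-1}(\Delta^\vee))$ and the right side is $\mathfrak n^\vee(\nu\Delta)=\mathfrak n((\nu\Delta)^\vee)$. These agree by \eqref{eq:nudual}.

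Next, suppose $\mathfrak m=\mathfrak n+\nu\mathfrak n$. Then
\[
\mathfrak m^\vee=\mathfrak n^\vee+\nu^{-1}\mathfrak n^\vee=\mathfrak n'+\nu\mathfrak n',
\qquad \mathfrak n':=\nu^{-1}(\mathfrak n^\vee).
\]
The multiset $\mathfrak n'$ is finitely supported, so $\mathfrak m^\vee$ is of Speh type. For the converse, apply the same argument to $\mathfrak m^\vee$ and use $\mathfrak m^{\vee\vee}=\mathfrak m$.

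As an alternative, one could use Theorem~\ref{thm:speh} and show $S_{\mathfrak m^\vee}(\Delta)=S_{\mathfrak m}(\nu^{-1}\Delta^\vee)$ up to reindexing. The direct argument above is simpler. There is no real obstacle here; the only care needed is the bookkeeping of $\nu$ versus $\nu^{-1}$ under $\vee$.
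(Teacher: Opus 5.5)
Your proposal is correct and follows essentially the same route as the paper's proof: you use $(\nu\mathfrak n)^\vee=\nu^{-1}\mathfrak n^\vee$ to rewrite $\mathfrak m^\vee=\mathfrak p+\nu\mathfrak p$ with $\mathfrak p=\nu^{-1}\mathfrak n^\vee$, and you get the converse from $\mathfrak m^{\vee\vee}=\mathfrak m$. Your pointwise verification of the identity $(\nu\mathfrak n)^\vee=\nu^{-1}(\mathfrak n^\vee)$ via \eqref{eq:nudual} fills in a step the paper only states.
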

\begin{proof}
Suppose that $\mathfrak{m} = \mathfrak{n}+\nu\mathfrak{n}$. Since $(\nu\mathfrak{n})^\vee = \nu^{-1}\mathfrak{n}^\vee$, 
we have $\mathfrak{m}^\vee =
\mathfrak{n}^\vee+\nu^{-1}\mathfrak{n}^\vee$. 
Set $\mathfrak{p} =
    \nu^{-1}\mathfrak{n}^\vee$. 
Then $\nu\mathfrak{p} =
    \mathfrak{n}^\vee$, and hence $ \mathfrak{m}^\vee =
    \mathfrak{p}+\nu\mathfrak{p}$. 
Thus, $\mathfrak{m}^\vee$ is of Speh type. The converse follows by
applying the same argument to $\mathfrak{m}^\vee$ and using
$\mathfrak{m}^{\vee\vee}=\mathfrak{m}$.
\end{proof}

\begin{corollary}\label{cor:85iff86}
Hypotheses~\ref{hyp:85} and~\ref{hyp:86} are equivalent. More
precisely, for each fixed multisegment $\mathfrak{m}$,
Hypothesis~\ref{hyp:85} holds for $\mathfrak{m}$ if and only if
Hypothesis~\ref{hyp:86} holds for $\mathfrak{m}$.
\end{corollary}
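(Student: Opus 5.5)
We prove the two implications separately, using only Theorem~\ref{thm:duality} and Lemma~\ref{lem:spehdual}. Both hypotheses have the same conclusion, namely that $\mathfrak m$ is of Speh type. They differ only in their premises: Hypothesis~\ref{hyp:85} for $\mathfrak m$ assumes that $\mathfrak m$ is distinguished, while Hypothesis~\ref{hyp:86} for $\mathfrak m$ assumes that both $\mathfrak m$ and $\mathfrak m\dv$ are distinguished.

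First, suppose Hypothesis~\ref{hyp:85} holds for $\mathfrak m$. If $\mathfrak m$ and $\mathfrak m\dv$ are both distinguished, then in particular $\mathfrak m$ is distinguished, so $\mathfrak m$ is of Speh type. Hence Hypothesis~\ref{hyp:86} holds for $\mathfrak m$; this direction needs no input at all.

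Conversely, suppose Hypothesis~\ref{hyp:86} holds for $\mathfrak m$, and let $\mathfrak m$ be distinguished. By the final assertion of Theorem~\ref{thm:duality}, $\mathfrak m\dv$ is also distinguished. The premise of Hypothesis~\ref{hyp:86} is therefore satisfied, and $\mathfrak m$ is of Speh type. Hence Hypothesis~\ref{hyp:85} holds for $\mathfrak m$.

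This gives the equivalence for each fixed $\mathfrak m$. Quantifying over all $\mathfrak m\in\Oz$ gives the equivalence of the global hypotheses. Lemma~\ref{lem:spehdual} is not needed for the pointwise statement. It becomes relevant only if one also wants the symmetric reformulation that Hypothesis~\ref{hyp:85} holds for $\mathfrak m$ if and only if it holds for $\mathfrak m\dv$. That follows by combining Theorem~\ref{thm:duality}, applied to both distinction and its converse via $\mathfrak m^{\vee\vee}=\mathfrak m$, with Lemma~\ref{lem:spehdual}. There is no real obstacle here, since all the substance lies in Theorem~\ref{thm:duality}(b), which is already established. The only point requiring care is to state the per-multisegment quantifiers correctly.
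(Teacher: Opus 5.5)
Your proposal is correct and is essentially the paper's proof: both rest on the final assertion of Theorem~\ref{thm:duality} ($\mathfrak m$ distinguished $\iff$ $\mathfrak m\dv$ distinguished), which makes the two antecedents equivalent while the conclusions coincide. You split this into two implications, whereas the paper states the equivalence of antecedents directly; your side remark about Lemma~\ref{lem:spehdual} is accurate but not needed here.
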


\begin{proof}
Hypothesis~\ref{hyp:85} asserts that if $\mathfrak{m}$ is distinguished, then $\mathfrak{m}$ is of Speh type, 
whereas Hypothesis~\ref{hyp:86} asserts that if $\mathfrak{m}$ and $\mathfrak{m}^\vee$ are both distinguished, then $\mathfrak{m}$ is of Speh type. 
By Theorem~\ref{thm:duality}, $\mathfrak{m}$ is distinguished if and only if $\mathfrak{m}^\vee$ is distinguished. Consequently, $\mathfrak{m}$ is distinguished precisely when both $\mathfrak{m}$ and $\mathfrak{m}^\vee$ are distinguished.
Thus, the antecedents of Hypotheses~\ref{hyp:85} and~\ref{hyp:86}
are equivalent, while their conclusions are identical.
\end{proof}

\begin{corollary}\label{cor:dual88}
Hypotheses~\ref{hyp:85} and~\ref{hyp:86} hold for every
$\mathfrak{m}\in\Oz$ satisfying $\lvert\mathfrak{m}^x\rvert\leq 2$ for every $x\in\Z$. 
Equivalently, at most two segments of $\mathfrak{m}$, counted with
multiplicity, have any prescribed beginning.
\end{corollary}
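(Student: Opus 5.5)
The plan is to transfer \cite[Proposition~8.8]{MOS} to $\mathfrak{m}$ through duality. Assume $\lvert\mathfrak{m}^x\rvert\leq 2$ for every $x\in\Z$, and let $\mathfrak{m}$ be distinguished. We must show that $\mathfrak{m}$ is of Speh type.

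First I check how multiplicities behave under duality. Since $\bb(\Delta)=x$ holds exactly when $\ee(\Delta^\vee)=-x$, we have $\mathfrak{m}^\vee(\Gamma)=\mathfrak{m}(\Gamma^\vee)$. Summing over all $\Gamma$ with $\ee(\Gamma)=-x$ gives $\lvert(\mathfrak{m}^\vee)_{-x}\rvert=\lvert\mathfrak{m}^x\rvert$. The hypothesis therefore says $\lvert(\mathfrak{m}^\vee)_y\rvert\leq 2$ for every $y$, i.e.\ $\lvert\mathfrak{m}^\vee[i]\rvert\leq 2$ for all $i$. This is exactly the hypothesis of \cite[Proposition~8.8]{MOS}.

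Next, Theorem~\ref{thm:duality} shows that $\mathfrak{m}^\vee$ is distinguished. Proposition~8.8 of \cite{MOS} (Hypothesis~\ref{hyp:85} for multisegments with at most two segments per endpoint) then makes $\mathfrak{m}^\vee$ of Speh type. By Lemma~\ref{lem:spehdual}, $\mathfrak{m}$ is of Speh type, which proves Hypothesis~\ref{hyp:85} for $\mathfrak{m}$.

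Hypothesis~\ref{hyp:86} for $\mathfrak{m}$ follows either from Corollary~\ref{cor:85iff86} or directly, since its antecedent is stronger. The equivalence in the statement between the multiset condition and ``at most two segments with any prescribed beginning'' is just the definition of $\mathfrak{m}^x$, counted with multiplicity.

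There is no real obstacle. The only point needing care is that \cite[Proposition~8.8]{MOS} must be quoted in its ``endpoint'' form $\lvert\mathfrak{m}[i]\rvert\leq 2$, and the index bookkeeping $x\leftrightarrow -x$ under $\vee$ must be stated correctly.
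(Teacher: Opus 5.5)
Your proposal is correct and follows essentially the same route as the paper's proof. Both use the identity $\lvert\mathfrak{m}^x\rvert=\lvert(\mathfrak{m}^\vee)_{-x}\rvert$ to place $\mathfrak{m}^\vee$ under \cite[Proposition~8.8]{MOS}. Both transfer distinction to $\mathfrak{m}^\vee$ by Theorem~\ref{thm:duality}, return to $\mathfrak{m}$ via Lemma~\ref{lem:spehdual}, and obtain Hypothesis~\ref{hyp:86} from Corollary~\ref{cor:85iff86} (your observation that its antecedent is stronger also suffices).
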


\begin{proof}
For every $x\in\Z$, one has $\lvert\mathfrak{m}^x\rvert =
    \lvert(\mathfrak{m}^\vee)_{-x}\rvert$. 
Indeed, duality sends a segment with beginning $x$ to a segment with
endpoint $-x$. Therefore, $\mathfrak{m}^\vee$ satisfies the assumption
of \cite[Proposition~8.8]{MOS}. Suppose that $\mathfrak{m}$ is distinguished. By
Theorem~\ref{thm:duality}, $\mathfrak{m}^\vee$ is distinguished.
It follows from \cite[Proposition~8.8]{MOS} that
$\mathfrak{m}^\vee$ is of Speh type. Lemma~\ref{lem:spehdual} then
implies that $\mathfrak{m}$ is of Speh type. Thus,
Hypothesis~\ref{hyp:85} holds for $\mathfrak{m}$, and
Corollary~\ref{cor:85iff86} shows that Hypothesis~\ref{hyp:86} holds
as well.
\end{proof}

\begin{remark}
More generally, let $\mathcal{C}$ be any class of multisegments for which Hypothesis~\ref{hyp:85} is known, and define $ \mathcal{C}^\vee := \{\mathfrak{m}^\vee:\mathfrak{m}\in\mathcal{C}\}$.
Theorem~\ref{thm:duality}, together with
Lemma~\ref{lem:spehdual}, implies that
Hypothesis~\ref{hyp:85} also holds for every multisegment in $\mathcal{C}^\vee$. Since $\mathfrak{m}$ is a set if and only if $\mathfrak{m}^\vee$ is a
set, the class considered in \cite[Proposition~8.7]{MOS} is self-dual.
By contrast, the condition in \cite[Proposition~8.8]{MOS}, which is
formulated in terms of common endpoints, is transformed by duality
into the condition involving common beginnings in
Corollary~\ref{cor:dual88}. Thus, Corollary~\ref{cor:dual88} provides
the corresponding dual class.
\end{remark}

\section{Peeling and extremal layers of multisegments}\label{sec:peel}
Throughout this section, we assume that $\mathfrak{m}\ne 0$ and write $c$ for the largest endpoint occurring in $\mathfrak{m}$. We use the notation $\mathfrak{m}_{c}$ and $\mathfrak{m}_{c-1}$ from Section~\ref{sec:setting}; the latter may be zero.

\begin{definition}\label{def:admissible}
A listing $\sigma$ of $\mathfrak{m}_{c-1}$ is \emph{admissible for $\mathfrak{m}$} if it satisfies the following conditions:
\begin{enumerate}[label=(A\arabic*)]
\item If $B\in\supp(\nu^{-1}\mathfrak{m}_{c})$, every occurrence of $B$ in $\sigma$ follows every occurrence of a segment $\Gamma\in\mathfrak{m}_{c-1}$ with $\bb(\Gamma)>\bb(B)$.
\item All occurrences of any given segment occupy consecutive positions in $\sigma$.
\end{enumerate}
\end{definition}
For example, any listing of $\mathfrak{m}_{c-1}$ by non-increasing beginnings is admissible.
List $\mathfrak{m}_{c}$ as $A_{1},\ldots,A_{r}$ in non-decreasing order of their beginnings. Let $\sigma$ be an admissible listing of $\mathfrak{m}_{c-1}$, and let $\rho$ be any standard order of
$\mathfrak{m}-\mathfrak{m}_{c}-\mathfrak{m}_{c-1}$. Then by Lemma~\ref{lem:orders}(a),(c) the concatenation
\begin{equation}\label{eq:peelorder}
\mathcal{O}=(A_{1},\ldots,A_{r},\ \sigma,\ \rho)
\end{equation}
is a standard order of $\mathfrak{m}$. Indeed, segments with the same endpoint cannot precede one another, while a segment in an earlier endpoint layer cannot precede one in a later layer; the remaining entries are standard by choice of $\rho$. 
\begin{theorem}\label{thm:peel}
Suppose that the standard order $\mathcal{O}$ in \eqref{eq:peelorder}
admits a relevant decomposition. Then $\nu^{-1}\mathfrak{m}_{c}\le \mathfrak{m}_{c-1}$. 
Moreover, the standard order $(\sigma',\rho)$ of
$\mathfrak{m}-\mathfrak{m}_{c}-\nu^{-1}\mathfrak{m}_{c}$
admits a relevant decomposition, where $\sigma'$ is obtained from
$\sigma$ by deleting one occurrence of $\nu^{-1}A_i$ for each
$1\le i\le r$.
\end{theorem}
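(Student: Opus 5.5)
The plan is to show that a relevant decomposition of $\mathcal{O}$ is \emph{rigid} on the first $r$ blocks. Specifically, I will show that each block $A_i$ is undecomposed ($k_i=1$), and that $\tau$ pairs it with an undecomposed block of $\sigma$ equal to $\nu^{-1}A_i$. Both assertions of the theorem then follow. Sending each $i\le r$ to the position of its partner in $\sigma$ gives an injection from $\{A_1,\ldots,A_r\}$ into the occurrences of $\mathfrak m_{c-1}$, with $A_i\mapsto\nu^{-1}A_i$. This yields $\nu^{-1}\mathfrak m_c\le\mathfrak m_{c-1}$. Deleting these $2r$ blocks, and restricting $\tau$ to the remaining indices, should give the relevant decomposition of $(\sigma',\rho)$.

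The first observation concerns pieces with endpoint $c$. The only such pieces are the top pieces $\Delta_{i,1}$ with $i\le r$, since these are the only blocks with endpoint $c$ and lower pieces end below $\bb$ of the piece above them. If $\tau(i,1)\prec_{\mathrm{lex}}(i,1)$, then (R3) would give a piece $\nu\Delta_{i,1}$ with endpoint $c+1$, which is impossible. Hence $(i,1)\prec_{\mathrm{lex}}\tau(i,1)$ and $\Delta_{\tau(i,1)}=\nu^{-1}\Delta_{i,1}$ is a piece with endpoint $c-1$. Such a piece is either the top piece of a $\sigma$-block, or the second piece $(i',2)$ of some $A_{i'}$ with $\bb(\Delta_{i',1})=c$.

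The key step is to exclude the second alternative and all decompositions of the $A_i$. I would argue by an extremal/inductive scheme along the order $A_1,\ldots,A_r$, which is non-decreasing in beginnings, combined with Lemma~\ref{lem:firstlast}(a) and Lemma~\ref{lem:first} for block $1$. For $A_1$, the pieces are paired with bottom pieces of later blocks, with strictly decreasing block indices $i_1>\cdots>i_{k_1}$. If $k_1\ge2$, the piece $\Delta_{1,k_1}$ forces a later block with beginning $\bb(A_1)-1$. Meanwhile $\Delta_{1,1}$ is paired with a later block whose bottom piece has beginning $\bb(\Delta_{1,1})-1>\bb(A_1)-1$ and which sits at a \emph{larger} index. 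This later block has endpoint $c$ or $c-1$. In the $\sigma$-layer, this contradicts (A1): a segment of $\mathfrak m_{c-1}$ with larger beginning must precede the occurrences of $\nu^{-1}A$. In the $A$-layer, it contradicts the non-decreasing beginnings of $A_1,\ldots,A_r$, together with (R1), which says that $\varphi$ decreases along blocks.

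Having shown $k_1=1$, with partner an entire $\sigma$-block equal to $\nu^{-1}A_1$, I will remove this pair and repeat for $A_2,\ldots$. The remaining first block is now the first among the $A$'s, and the same lemmas apply after reindexing. I expect this rigidity step to be the main obstacle. The interaction between $A$-pieces pairing with each other, and the need to use (A1) precisely for the supports of $\nu^{-1}\mathfrak m_c$, is where the argument must be done carefully. I would first try to set up the induction with a hypothesis strong enough to cover the reindexed situation, namely that the remaining list is still an order of the shape \eqref{eq:peelorder}.

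Finally, for the second assertion, I will delete the paired blocks and relabel the remaining blocks increasingly. Relabeling preserves $\ll$ and the lexicographic order, so (R1)--(R3) survive restriction, because $\tau$ maps the deleted set to itself. The deleted $\sigma$-blocks may not be the specific occurrences prescribed in $\sigma'$. However, by (A2) all occurrences of $\nu^{-1}A_i$ are consecutive and identical, so permuting them does not change the order. Hence the resulting order is exactly $(\sigma',\rho)$, and it is standard by Lemma~\ref{lem:orders}.
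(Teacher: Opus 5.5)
Your proposal is correct and follows essentially the same route as the paper. Both argue by induction on $r$ through the first block $A_1$: Lemma~\ref{lem:firstlast}(a) and Lemma~\ref{lem:first} force the partners of $A_1$ into the $\sigma$-part, (A1) rules out $k_1\ge 2$, and the paired one-piece blocks are then deleted, with (A2) identifying the resulting listing with $\sigma'$. The only cosmetic differences are that the paper disposes of your ``$A$-layer'' case at the outset, and that it compares blocks $i_{k_1-1}$ and $i_{k_1}$ rather than $i_1$ and $i_{k_1}$. For the first point, block $i_{k_1}$ has beginning $\bb(A_1)-1$, below every beginning in $\mathfrak m_c$, so $r<i_{k_1}<\cdots<i_1\le r+s$.
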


\begin{proof}
We argue by induction on $r=|\mathfrak{m}_{c}|\ge1$. Let
$\{\Delta_{\imath}\}_{\imath\in I}$ and $\tau$ give a relevant
decomposition of $\mathcal{O}$, and put $s=|\mathfrak{m}_{c-1}|$.
Write $\Delta_1=A_1$ and use the indices
$i_1>\cdots>i_{k_1}>1$ from Lemma~\ref{lem:firstlast}(a).
First, all partners of the pieces of $A_1$ lie in the $\sigma$-part
of $\mathcal{O}$. Indeed, Lemma~\ref{lem:first} gives
$\bb(\Delta_{i_{k_1}})=\bb(A_1)-1$.
Since $A_1$ has the smallest beginning among the segments of $\mathfrak{m}_{c}$, we have $i_{k_1}>r$. Thus, every $i_j$ exceeds $r$, and every $\Delta_{i_j}$ has endpoint at most $c-1$. On the
other hand, Lemma~\ref{lem:first} gives $\ee(\Delta_{i_1})\ge\ee(\Delta_{1,1})-1=c-1$. 
Consequently, we have $\ee(\Delta_{i_1})=c-1$ and $i_1\le r+s$. As $r<i_{k_1}<\cdots<i_1$, all these partners belong to the $\sigma$-part and have endpoint $c-1$.
The first partner is an uncut segment. By
Lemma~\ref{lem:firstlast}(a), we have
$\Delta_{i_1,k_{i_1}}=\nu^{-1}\Delta_{1,1}$,
whose endpoint is $c-1$. Since $\Delta_{i_1}$ itself has endpoint
$c-1$, its bottom piece is the whole segment. Hence, $k_{i_1}=1$
and $\Delta_{i_1}=\nu^{-1}\Delta_{1,1}$.

We claim that $k_1=1$. If $k_1\ge2$, Lemma~\ref{lem:first} gives
$\bb(\Delta_{i_{k_1}})=\bb(A_1)-1$.
We have already shown that $\ee(\Delta_{i_{k_1}})=c-1$. These two
endpoints determine the whole partner segment:
$$
\Delta_{i_{k_1}}=\nu^{-1}A_1
\in\supp(\nu^{-1}\mathfrak{m}_{c}).
$$
But $i_{k_1-1}>i_{k_1}$, whereas Lemma~\ref{lem:first} gives
$\bb(\Delta_{i_{k_1-1}})>\bb(\Delta_{i_{k_1}})$. Thus, the
occurrence of $\nu^{-1}A_1$ at position $i_{k_1}$ is followed in
$\sigma$ by a segment with strictly larger beginning, contrary to
(A1). Therefore, $k_1=1$.

It follows that the blocks indexed by $1$ and $i_1$ each consist
of one piece, are paired by $\tau$, and contain $A_1$ and
$\nu^{-1}A_1$, respectively. In particular,
$\nu^{-1}A_1\in\mathfrak{m}_{c-1}$. Delete these two blocks and
restrict $\tau$ to the remaining indices. After order-preserving
relabelling, the restricted involution still satisfies (R1)--(R3). Hence, the order
$(A_2,\ldots,A_r,\ \sigma_1,\ \rho)$,
where $\sigma_1$ is obtained from $\sigma$ by deleting one
occurrence of $\nu^{-1}A_1$, admits a relevant decomposition.

By (A2), the resulting listing $\sigma_1$ is independent of which
occurrence of $\nu^{-1}A_1$ is deleted. It remains admissible for
$\mathfrak{m}'=\mathfrak{m}-A_1-\nu^{-1}A_1$: both (A1) and (A2)
are preserved by this deletion, and
$$
\supp\bigl(\nu^{-1}\mathfrak{m}'_{c}\bigr)
\subseteq\supp\bigl(\nu^{-1}\mathfrak{m}_{c}\bigr).
$$
If $r=1$, this proves both assertions. If $r>1$, then $c$ remains
the largest endpoint of $\mathfrak{m}'$, and the displayed order
has the form \eqref{eq:peelorder} for $\mathfrak{m}'$. Applying the
induction hypothesis proves
$$
\nu^{-1}\mathfrak{m}'_{c}\le\mathfrak{m}'_{c-1}
$$
and shows that $(\sigma',\rho)$ admits a relevant decomposition.
Adding back the paired occurrences gives
$\nu^{-1}\mathfrak{m}_{c}\le\mathfrak{m}_{c-1}$.
\end{proof}

\begin{corollary}\label{cor:firstlevel}
Let $\mathfrak{m}\ne0$ be distinguished, and let $c$ be its largest
endpoint. Then
$\nu^{-1}\mathfrak{m}_{c}\le\mathfrak{m}_{c-1}$,
or, equivalently, $S_{\mathfrak{m}}(\Delta)\ge0$ for every segment
$\Delta$ with $\ee(\Delta)=c-1$. Dually, if $a$ is the smallest beginning occurring in $\mathfrak{m}$, then
$\nu\mathfrak{m}^{a}\le\mathfrak{m}^{a+1}$.
\end{corollary}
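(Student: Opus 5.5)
The plan is to derive the first inequality directly from Theorem~\ref{thm:peel}, and the dual inequality from the first one via Theorem~\ref{thm:duality}.

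\emph{First inequality.} Since $\mathfrak{m}\neq0$ is distinguished, every standard order of $\mathfrak{m}$ admits a relevant decomposition. I will use an order of the form \eqref{eq:peelorder}:
\begin{itemize}[label={}]
\item list $\mathfrak{m}_c$ as $A_1,\dots,A_r$ by non-decreasing beginnings;
\item take $\sigma$ to be $\mathfrak{m}_{c-1}$ listed by non-increasing beginnings, with equal segments grouped consecutively;
\item take $\rho$ to be any standard order of $\mathfrak{m}-\mathfrak{m}_c-\mathfrak{m}_{c-1}$.
\end{itemize}
This $\sigma$ satisfies (A1): if $\bb(\Gamma)>\bb(B)$, then $\Gamma$ comes before $B$. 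It satisfies (A2) by the grouping. Lemma~\ref{lem:orders} shows that the concatenation is standard, so it admits a relevant decomposition. Theorem~\ref{thm:peel} then gives $\nu^{-1}\mathfrak{m}_c\le\mathfrak{m}_{c-1}$.

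\emph{Equivalence with $S_{\mathfrak{m}}$.} Let $\ee(\Delta)=c-1$. Then $\nu^t\Delta$ has endpoint $c-1+t>c$ for $t\ge2$, so those terms vanish. Hence
$$S_{\mathfrak{m}}(\Delta)=\mathfrak{m}(\Delta)-\mathfrak{m}(\nu\Delta)=\mathfrak{m}_{c-1}(\Delta)-(\nu^{-1}\mathfrak{m}_c)(\Delta).$$
Nonnegativity for all such $\Delta$ is therefore exactly the inequality $\nu^{-1}\mathfrak{m}_c\le\mathfrak{m}_{c-1}$, using the convention $(\nu^{-1}\mathfrak{m})(\Delta)=\mathfrak{m}(\nu\Delta)$. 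On segments whose endpoint is not $c-1$, both sides of that inequality are zero. This is also Remark~\ref{rem:peelingnumbers}.

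\emph{Dual inequality.} By Theorem~\ref{thm:duality}, $\mathfrak{m}^\vee$ is distinguished and nonzero. Its largest endpoint is $-a$, because duality sends beginning $x$ to endpoint $-x$. So $(\mathfrak{m}^\vee)_{-x}=(\mathfrak{m}^x)^\vee$. Applying the first part to $\mathfrak{m}^\vee$ gives
$$\nu^{-1}(\mathfrak{m}^a)^\vee\le(\mathfrak{m}^{a+1})^\vee.$$
Using $(\nu\mathfrak{n})^\vee=\nu^{-1}\mathfrak{n}^\vee$, which is the multiset form of \eqref{eq:nudual}, the left side equals $(\nu\mathfrak{m}^a)^\vee$. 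Since $\vee$ is a bijection that preserves $\le$, dualizing back yields $\nu\mathfrak{m}^a\le\mathfrak{m}^{a+1}$.

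The only real work is checking that the chosen $\sigma$ is admissible, and keeping the signs straight in the duality bookkeeping. The main difficulty was already handled by Theorem~\ref{thm:peel}.
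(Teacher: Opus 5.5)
Your proposal is correct and is essentially the paper's own proof. You apply Theorem~\ref{thm:peel} to the order \eqref{eq:peelorder} with $\sigma$ listed by non-increasing beginnings, read off the $S_{\mathfrak{m}}$ reformulation from the vanishing of the terms with $t\ge2$, and obtain the dual inequality by applying the first part to $\mathfrak{m}^\vee$ via Theorem~\ref{thm:duality} and \eqref{eq:nudual}; your explicit check that this $\sigma$ satisfies (A1) and (A2) is a detail the paper leaves to the remark following Definition~\ref{def:admissible}.
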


\begin{proof}
Choose $\sigma$ by non-increasing beginnings and choose any
standard order $\rho$. The order \eqref{eq:peelorder} is standard
and, because $\mathfrak{m}$ is distinguished, admits a relevant
decomposition. The first inequality follows from
Theorem~\ref{thm:peel}. For $\ee(\Delta)=c-1$, all terms
$\mathfrak{m}(\nu^t\Delta)$ with $t\ge2$ vanish, so
$$
S_{\mathfrak{m}}(\Delta)
=\mathfrak{m}_{c-1}(\Delta)
-(\nu^{-1}\mathfrak{m}_{c})(\Delta),
$$
proving the stated equivalence.

For the dual assertion, $-a$ is the largest endpoint of
$\mathfrak{m}\dv$, and
$$
(\mathfrak{m}\dv)_{-a}=(\mathfrak{m}^{a})\dv,
\qquad
(\mathfrak{m}\dv)_{-a-1}=(\mathfrak{m}^{a+1})\dv.
$$
By Theorem~\ref{thm:duality}, $\mathfrak{m}\dv$ is distinguished.
Applying the first inequality to it and using \eqref{eq:nudual}
yields
$$
(\nu\mathfrak{m}^{a})\dv
=\nu^{-1}(\mathfrak{m}^{a})\dv
\le(\mathfrak{m}^{a+1})\dv.
$$
Taking duals proves the claim.
\end{proof}

\begin{remark}\label{rem:tension}
Theorem~\ref{thm:peel} produces a relevant decomposition after the
segments at endpoint $c$ and their $\nu^{-1}$-translates have been
removed. To apply the theorem again with $c-1$ as the largest
endpoint, the leftover layer
$R_{c-1}=\mathfrak{m}_{c-1}-\nu^{-1}\mathfrak{m}_{c}$
must appear in non-decreasing order of beginnings. This conflicts
with (A1) if a leftover occurrence of a segment in
$\supp(\nu^{-1}\mathfrak{m}_{c})$ has smaller beginning than
another segment in $R_{c-1}$: (A1) places the latter first, whereas
the new top layer would require the reverse order. Since
Corollary~\ref{cor:firstlevel} makes $c-1$ stable, the absence of
this obstruction is precisely the tameness condition at this first
peeling level. Further iteration also requires an admissible order
at each subsequent level.
The obstruction is reflected in the example of
Theorem~\ref{thm:E}: no standard order of $\mathfrak{m}_0$ with
non-increasing endpoints is a witness.
\end{remark}

\section{A numerical tameness condition}\label{sec:tame}

Recall from Definition~\ref{def:tameintro} that $x\in\Z$ is
\emph{stable} for $\mathfrak{m}$ if $S_{\mathfrak{m}}(\Gamma)\ge0$
for every segment $\Gamma$ with $\ee(\Gamma)\ge x$. The multiset
$\mathfrak{m}$ is \emph{tame} if, for every stable $x$ and every
segment $\Delta$ with $\ee(\Delta)=x$,
\begin{equation}\label{eq:tame}
S_{\mathfrak{m}}(\Delta)>0
\ \text{and}\
S_{\mathfrak{m}}(\nu\Delta)>0
\quad\Longrightarrow\quad
\bb(\Delta)=
\max\{\bb(\Gamma):
\ee(\Gamma)=x,\ S_{\mathfrak{m}}(\Gamma)>0\}.
\end{equation}
A segment is determined by its beginning and end. Consequently,
at most one segment with a given stable end can satisfy the
antecedent of \eqref{eq:tame}.

In the notation of Remark~\ref{rem:peelingnumbers}, $x$ is stable
if and only if $R_y\ge0$ for every $y\ge x$. When $x$ is stable,
condition \eqref{eq:tame} says that any segment belonging to both
$\supp R_x$ and $\supp(\nu^{-1}R_{x+1})$ has the largest beginning
among the segments of $\supp R_x$.

\begin{theorem}\label{thm:main}
Let $\mathfrak{m}\in\Oz$ be tame. If $\mathfrak{m}$ is
distinguished, then it is of Speh type. The same conclusion holds
if $\mathfrak{m}\dv$ is tame.
\end{theorem}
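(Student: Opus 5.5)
The plan is to prove the contrapositive by iterating Theorem~\ref{thm:peel} down the endpoint layers, using tameness to keep the order admissible at each level. The dual case then follows at once: if $\mathfrak m\dv$ is tame and $\mathfrak m$ is distinguished, Theorem~\ref{thm:duality} makes $\mathfrak m\dv$ distinguished. The first part then gives that $\mathfrak m\dv$ is of Speh type, and Lemma~\ref{lem:spehdual} transfers this to $\mathfrak m$. So the work is in the first assertion.

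For the first assertion, suppose $\mathfrak m$ is tame but not of Speh type. By Theorem~\ref{thm:speh} and Remark~\ref{rem:peelingnumbers}, there is a largest $y$ with $R_y\not\ge 0$; call it $y_0$. Then $x=y_0+1$ is stable, and so is every $x'$ with $y_0<x'\le c$. I will construct a single standard order $\mathcal O$ of $\mathfrak m$ with non-increasing endpoints, built layer by layer, and show that it admits no relevant decomposition. This contradicts distinction.

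The order is as follows. In the top layer, list $\mathfrak m_c=R_c$ by non-decreasing beginnings. In each layer $x-1$ with $x$ stable, list $\mathfrak m_{x-1}$ as a concatenation. The first part is the leftover multiset $R_{x-1}$ (when it is $\ge0$), sorted by non-decreasing beginnings. The second part is the occurrences of $\nu^{-1}R_x$, sorted so that the concatenated layer still satisfies (A1) and (A2) relative to $R_x$. Below $y_0$, use any order with non-increasing endpoints. By Lemma~\ref{lem:orders}(a), this order is standard.

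Then I apply the argument of Theorem~\ref{thm:peel} repeatedly. Suppose $\mathcal O$ has a relevant decomposition. Peeling $R_c$ against $\nu^{-1}R_c$ leaves a relevant decomposition of the remaining order. The top layer of the remainder is $R_{c-1}$, now at the front and in non-decreasing beginnings. So the remainder again has the form \eqref{eq:peelorder}, provided the next layer listing is admissible.

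The main obstacle is exactly the tension of Remark~\ref{rem:tension}. The layer $\mathfrak m_{x-1}$ must be listed so that $R_{x-1}$ comes in non-decreasing beginnings (to serve as the next top layer). At the same time, the whole layer must satisfy (A1) with respect to $\supp(\nu^{-1}R_x)$. A conflict arises only when a segment $\Delta$ lies in both $\supp R_{x-1}$ and $\supp\nu^{-1}R_x$, i.e.\ $S(\Delta)>0$ and $S(\nu\Delta)>0$, while some $\Gamma\in\supp R_{x-1}$ has larger beginning. Tameness at the stable level $x-1$ excludes exactly this: such a $\Delta$ is unique and has maximal beginning in $\supp R_{x-1}$.

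So I will place all remaining copies of $\Delta$ (the $R$-part and the $\nu^{-1}R_x$-part) consecutively at the junction: last in the $R_{x-1}$ block and first in the $\nu^{-1}R_x$ block. The remaining occurrences of $\nu^{-1}R_x$ are then listed by non-increasing beginnings. I still need to check carefully that (A1) holds with this placement. The delicate point is that copies of $\Delta$ must follow every segment in the layer with larger beginning. Here I expect to use the maximality of $\bb(\Delta)$ in $\supp R_{x-1}$, together with the fact that after peeling only $R_{x-1}$ remains relevant for the next step. This may require a modest strengthening of Theorem~\ref{thm:peel}, in which (A1) is only imposed on occurrences not yet consumed.

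Iterating down to level $y_0+1$, the final application of Theorem~\ref{thm:peel}, with top layer $R_{y_0+1}$, yields $R_{y_0}=\mathfrak m_{y_0}-\nu^{-1}R_{y_0+1}\ge 0$. This contradicts the choice of $y_0$.
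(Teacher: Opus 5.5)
Your overall plan matches the paper's. You argue by contradiction, build one standard order with non-increasing endpoints, apply Theorem~\ref{thm:peel} layer by layer, use tameness to reconcile the two ordering requirements, and handle the dual case via Theorem~\ref{thm:duality} and Lemma~\ref{lem:spehdual}. However, the layer listing you propose is not admissible in general, so Theorem~\ref{thm:peel} cannot be applied to it.

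\emph{The gap.} You put all of $R_{x-1}$ first, then the rest of $\nu^{-1}R_x$, with the copies of $\Delta$ at the junction. Since $\Delta\in\supp(\nu^{-1}R_x)$, condition (A1) requires every copy of $\Delta$ to follow every occurrence in $\mathfrak m_{x-1}$ with a larger beginning. Tameness makes $\bb(\Delta)$ maximal only within $\supp R_{x-1}$. It does not stop $\supp(\nu^{-1}R_x)$ from containing some $\Gamma$ with $\bb(\Gamma)>\bb(\Delta)$, and your listing places such $\Gamma$ after the copies of $\Delta$. For example, $\mathfrak m=\{[1,2],[2,2],[0,1],[0,1],[1,1]\}$ is tame and not of Speh type ($R_0([-1,0])=-1$), with $R_1=\{[0,1]\}$ and $\nu^{-1}R_2=\{[0,1],[1,1]\}$. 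Your layer-$1$ listing is $([0,1],[0,1],[1,1])$, which violates (A1).

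Restricting (A1) to ``occurrences not yet consumed'' does not repair this. Theorem~\ref{thm:peel} strips the top layer in non-decreasing order of beginnings, so $\nu\Delta$ is handled while $\Gamma$ is still present, and the step of its proof forcing $k_1=1$ genuinely fails. Already the order $([1,2],[2,2],[0,1],[1,1])$ admits a relevant decomposition with $[1,2]=([2,2],[1,1])$ and $[0,1]=([1,1],[0,0])$. Here $\tau$ exchanges $(1,1)\leftrightarrow(4,1)$, $(1,2)\leftrightarrow(3,2)$ and $(2,1)\leftrightarrow(3,1)$. The top segment is cut instead of being matched whole with its $\nu^{-1}$-translate, which is exactly what your induction must exclude. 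In the five-segment example your order happens to admit no relevant decomposition, but nothing in your argument shows this.

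\emph{The fix.} The missing observation is that $R_{x-1}$ need not come first in its layer. Only the listing left after deleting one $\nu^{-1}A$ for each $A\in R_x$ must be non-decreasing, and whatever is placed in front is deleted anyway. Let $\beta$ be the largest beginning in $\supp R_{x-1}$, and let $X$ be the segment with end $x-1$ and beginning $\beta$. List the layer in three parts:
\begin{enumerate}[label=(\arabic*)]
\item all occurrences with beginning $>\beta$, by non-increasing beginnings; these all lie in $\nu^{-1}R_x$;
\item the occurrences of $R_{x-1}$ with beginning $<\beta$, by non-decreasing beginnings; by tameness, none of these segments lies in $\supp(\nu^{-1}R_x)$;
\item all remaining occurrences, namely every copy of $X$ and the segments of $\nu^{-1}R_x$ with beginning $<\beta$, by non-increasing beginnings.
\end{enumerate}
This listing satisfies (A1) and (A2). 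The deletion removes part (1) entirely and leaves $R_{x-1}$ in non-decreasing order. Hence Theorem~\ref{thm:peel} applies verbatim at every stable level, with no strengthening needed; this is the paper's $P_1,P_2,P_3$ construction. You must also list the layer $\mathfrak m_{y_0}$ admissibly, for instance by non-increasing beginnings. An arbitrary order there does not justify the final application of Theorem~\ref{thm:peel}.
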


\begin{proof}
By Theorem~\ref{thm:duality} and Lemma~\ref{lem:spehdual}, the
assertion for a multiset whose dual is tame follows from the
first assertion. We therefore assume that $\mathfrak{m}$ is tame
and distinguished. The result is immediate if $\mathfrak{m}=0$,
so suppose that $\mathfrak{m}\ne0$, and let $c$ be its largest
endpoint.

Suppose, for a contradiction, that $\mathfrak{m}$ is not of Speh
type. For each $x\le c$, let $R_x$ be the possibly signed
multiset of Remark~\ref{rem:peelingnumbers}; thus
$R_x(\Delta)=S_{\mathfrak{m}}(\Delta)$ when $\ee(\Delta)=x$.
By Theorem~\ref{thm:speh}, some $R_x$ takes a negative value.
Choose $x^{*}$ to be the largest such $x$. Since
$R_c=\mathfrak{m}_c\ge0$, we have $x^{*}\le c-1$ and
\begin{equation}\label{eq:Rpos}
R_y\ge0\quad\text{for every }y>x^{*},
\qquad \text{while}~
R_{x^{*}}\not\ge0.
\end{equation}

\emph{Construction of a standard order.}
For $x^{*}<y\le c-1$, put $N_y=\nu^{-1}R_{y+1}$ and $L_y=R_y=\mathfrak{m}_y-N_y$. 
By \eqref{eq:Rpos}, both $N_y$ and $L_y$ are multisets of
segments with end $y$. We construct a listing $\sigma_y$ of
$\mathfrak{m}_y$.
If $L_y=0$, list $\mathfrak{m}_y$ by non-increasing beginnings. Otherwise, set
$$
\beta_y=\max\{\bb(\Gamma):\Gamma\in\supp L_y\},
$$
and let $X_y$ be the segment with end $y$ and beginning $\beta_y$.
Define the following sub-multisets, with occurrences counted
with multiplicity:
$$
\begin{aligned}
P_1&=\{\Gamma\in\mathfrak{m}_y:
             \bb(\Gamma)>\beta_y\},\\
P_2&=\{\Gamma\in L_y:
             \bb(\Gamma)<\beta_y\},\\
P_3&=\mathfrak{m}_y-P_1-P_2.
\end{aligned}
$$
Let $\sigma_y$ list $P_1$ by non-increasing beginnings, followed
by $P_2$ by non-decreasing beginnings, and finally $P_3$ by
non-increasing beginnings. For $y\le x^{*}$, list
$\mathfrak{m}_y$ by non-increasing beginnings. List
$\mathfrak{m}_c$ by non-decreasing beginnings, and denote this
listing by $\sigma_c$. Concatenating the nonempty endpoint layers gives
$\mathcal{O}
=(\sigma_c,\sigma_{c-1},\sigma_{c-2},\ldots)$.
Only finitely many entries occur in this expression. Its
endpoints are non-increasing, so $\mathcal{O}$ is standard by
Lemma~\ref{lem:orders}(a).

\emph{Compatibility claim.}
For every $x^{*}<y\le c-1$, the listing $\sigma_y$ has the
following properties:

(i) $\sigma_y$ is admissible for the multiset
$R_{y+1}+\mathfrak{m}_y+\mathfrak{m}_{y-1}+\cdots$.

(ii) Deleting from $\sigma_y$ one occurrence of $\nu^{-1}A$
for each occurrence $A$ in $R_{y+1}$ leaves the listing of
$L_y$ by non-decreasing beginnings.

If $L_y=0$, then $\mathfrak{m}_y=N_y$. The listing by
non-increasing beginnings is admissible, and the deletion leaves
the empty listing. Suppose henceforth that $L_y\ne0$.
Because $L_y$ has no segment with beginning greater than
$\beta_y$, we have $P_1\le N_y$. Moreover, $y$ is stable by
\eqref{eq:Rpos}. If
$\Delta\in\supp L_y\cap\supp N_y$, then
$$
S_{\mathfrak{m}}(\Delta)=L_y(\Delta)>0,
\qquad
S_{\mathfrak{m}}(\nu\Delta)
=R_{y+1}(\nu\Delta)>0.
$$
Therefore, tameness gives $\bb(\Delta)=\beta_y$, and hence $\Delta=X_y$. In particular, every segment occurring in $P_2$
lies outside $\supp N_y$. Every occurrence of a segment in
$\supp N_y$ with beginning at most $\beta_y$ consequently lies
in $P_3$. No segment has occurrences split between two of $P_1,P_2,P_3$.
The beginning separates $P_1$ from the other two parts. If
$\bb(\Gamma)<\beta_y$, then either
$\Gamma\notin\supp N_y$, in which case all its occurrences lie
in $P_2$, or $\Gamma\in\supp N_y$, in which case
$L_y(\Gamma)=0$ and all its occurrences lie in $P_3$.
Occurrences of $X_y$ all lie in $P_3$. Each part is listed
monotonically by beginning, so equal segments are consecutive.
This proves (A2).

To verify (A1), let $\Gamma\in\supp N_y$. If
$\bb(\Gamma)>\beta_y$, all its occurrences lie in $P_1$ and
follow every segment with larger beginning. If
$\bb(\Gamma)\le\beta_y$, all its occurrences lie in $P_3$.
Segments with larger beginning lie either in the earlier parts
$P_1$ and $P_2$, or earlier in the non-increasing listing of
$P_3$. Thus, $\sigma_y$ is admissible, proving (i).

Finally, the deletion removes all of $P_1$ and all of $P_3$
except its $L_y(X_y)$ occurrences of $X_y$; it leaves $P_2$
untouched. The remaining listing consists of $P_2$ by
non-decreasing beginnings, followed by $X_y$, whose beginning
is the largest in $L_y$. This proves (ii). Notice also that if
$R_{y+1}=0$, then $N_y=0$ and the construction gives
$\sigma_y$ itself as the listing of $R_y$ by non-decreasing
beginnings.

\emph{Peeling.}
For $c\ge y>x^{*}$, let $\lambda_y$ be the listing of $R_y$ by
non-decreasing beginnings, including the empty listing when
$R_y=0$. We show, by descending induction on $y$, that
\begin{equation}\label{eq:step}
\mathcal{O}_y
=(\lambda_y,\sigma_{y-1},\sigma_{y-2},\ldots)
\quad\text{admits a relevant decomposition}
\qquad \text{for}~ c\ge y>x^{*}.
\end{equation}
The order $\mathcal{O}_y$ is an order of
$R_y+\mathfrak{m}_{y-1}+\mathfrak{m}_{y-2}+\cdots$ and is
standard because its endpoint layers are arranged in
non-increasing order.

For $y=c$, we have $\lambda_c=\sigma_c$ and
$\mathcal{O}_c=\mathcal{O}$. Since $\mathfrak{m}$ is
distinguished, $\mathcal{O}_c$ admits a relevant decomposition.
Suppose that $y>x^{*}+1$ and that $\mathcal{O}_y$ admits a relevant decomposition. If $R_y=0$, then
$N_{y-1}=\nu^{-1}R_y=0$. By the observation following the
compatibility claim, $\sigma_{y-1}=\lambda_{y-1}$. Hence, we have
$\mathcal{O}_y=\mathcal{O}_{y-1}$, and the relevant
decomposition persists.
If $R_y\ne0$, then $y$ is the largest endpoint of the
multiset ordered by $\mathcal{O}_y$. Its top layer $R_y$ is
listed by non-decreasing beginnings, and its next layer
$\mathfrak{m}_{y-1}$ is listed by the admissible
$\sigma_{y-1}$. Theorem~\ref{thm:peel} applies. It gives a
relevant decomposition after deleting $R_y$ and one occurrence
of $\nu^{-1}A$ for every occurrence $A$ in $R_y$. By part (ii)
of the compatibility claim, the resulting order is
$\mathcal{O}_{y-1}$. This completes the descending induction.

Now $R_{x^{*}+1}\ne0$. Indeed, the recursion
$
R_{x^{*}}
=\mathfrak{m}_{x^{*}}-\nu^{-1}R_{x^{*}+1}
$
and $R_{x^{*}}\not\ge0$ imply that
$\nu^{-1}R_{x^{*}+1}\ne0$. Thus, $\mathcal{O}_{x^{*}+1}$
has a nonempty top layer and, by \eqref{eq:step}, admits a
relevant decomposition. Its next layer $\sigma_{x^{*}}$ is
listed by non-increasing beginnings and is therefore
admissible. Theorem~\ref{thm:peel} yields
$\nu^{-1}R_{x^{*}+1}\le\mathfrak{m}_{x^{*}}$,
or equivalently $R_{x^{*}}\ge0$. This contradicts
\eqref{eq:Rpos}. Hence, $\mathfrak{m}$ is of Speh type.
\end{proof}

\begin{corollary}\label{cor:classes}
Hypotheses~\ref{hyp:85} and~\ref{hyp:86} hold for
$\mathfrak{m}$ in each of the following cases:
\begin{enumerate}[label=(\alph*)]
\item $\mathfrak{m}$ is a set;
\item $|\mathfrak{m}_{x}|\le2$ for every $x\in\Z$
      (equivalently, $|\mathfrak{m}[i]|\le2$ for every $i$);
\item $|\mathfrak{m}^{x}|\le2$ for every $x\in\Z$;
\item more generally, $\mathfrak{m}$ or $\mathfrak{m}\dv$
      is tame.
\end{enumerate}
\end{corollary}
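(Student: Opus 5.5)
Case (d) is exactly Theorem~\ref{thm:main}, combined with Corollary~\ref{cor:85iff86} to pass from Hypothesis~\ref{hyp:85} to Hypothesis~\ref{hyp:86}. So the plan is to show that in cases (a) and (b) the multiset $\mathfrak m$ is tame, and that in case (c) its dual $\mathfrak m\dv$ is tame. For (c) we use $|\mathfrak m^x|=|(\mathfrak m\dv)_{-x}|$, as in the proof of Corollary~\ref{cor:dual88}, so $\mathfrak m\dv$ satisfies (b). It therefore suffices to prove tameness in cases (a) and (b).

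\emph{Case (a).} Let $\mathfrak m$ be a set, let $x$ be stable, and let $\Delta$ have $\ee(\Delta)=x$ with $S_{\mathfrak m}(\Delta)>0$ and $S_{\mathfrak m}(\nu\Delta)>0$. By \eqref{eq:Srec},
\[
\mathfrak m(\Delta)=S_{\mathfrak m}(\Delta)+S_{\mathfrak m}(\nu\Delta)\ge 2,
\]
which is impossible for a set. Hence the antecedent of \eqref{eq:tame} never holds, and $\mathfrak m$ is tame vacuously.

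\emph{Case (b).} Suppose $|\mathfrak m_x|\le 2$ for every $x$, and let $\Delta$ satisfy the antecedent of \eqref{eq:tame} at a stable level $x$. Again \eqref{eq:Srec} gives $\mathfrak m(\Delta)\ge 2$. Since $\mathfrak m_x$ has at most two elements and $\ee(\Delta)=x$, we get $\mathfrak m_x=\{\Delta,\Delta\}$. Now let $\Gamma$ be any segment with $\ee(\Gamma)=x$ and $S_{\mathfrak m}(\Gamma)>0$.
\begin{itemize}
\item If $\Gamma\ne\Delta$, then $\mathfrak m(\Gamma)=0$, and \eqref{eq:Srec} gives $S_{\mathfrak m}(\Gamma)=-S_{\mathfrak m}(\nu\Gamma)$.
\item Since $\ee(\nu\Gamma)=x+1\ge x$ and $x$ is stable, $S_{\mathfrak m}(\nu\Gamma)\ge 0$.
\item Hence $S_{\mathfrak m}(\Gamma)\le 0$, a contradiction.
\end{itemize}
So $\Gamma=\Delta$, and the maximum in \eqref{eq:tame} is attained at $\Delta$, as required.

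\emph{Strictness.} This is not asked for in the corollary itself, but the example $\{[0,1],[1,2],[1,2],[1,2],[2,3]\}$ from the introduction shows the inclusions are strict. The only delicate point in the argument is the step in case (b) that uses stability of $x$ to force every positive-$S$ segment at level $x$ to lie in $\supp\mathfrak m_x$. I expect this to go through directly.
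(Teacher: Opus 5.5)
Your proposal is correct and matches the paper's proof essentially step for step. The paper reduces (c) to (b) by duality and (d) to Theorem~\ref{thm:main} plus Corollary~\ref{cor:85iff86}, and uses \eqref{eq:Srec} to get $\mathfrak m(\Delta)\ge 2$; your pointwise argument $S_{\mathfrak m}(\Gamma)=-S_{\mathfrak m}(\nu\Gamma)\le 0$ is just the segment-by-segment form of the paper's inequality $R_x=\mathfrak m_x-\nu^{-1}R_{x+1}\le\mathfrak m_x$.
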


\begin{proof}
By Theorem~\ref{thm:main} and
Corollary~\ref{cor:85iff86}, it suffices to show that the
multisets in (a) and (b) are tame. Part (c) then follows
from (b) by duality, since
$|\mathfrak{m}^{x}|=|(\mathfrak{m}\dv)_{-x}|$.
Let $x$ be stable, and suppose that a segment $\Delta$ with $\ee(\Delta)=x$ satisfies the antecedent of \eqref{eq:tame}. The recurrence \eqref{eq:Srec} gives
$\mathfrak{m}(\Delta)
=S_{\mathfrak{m}}(\Delta)
 +S_{\mathfrak{m}}(\nu\Delta)\ge2$.
Moreover, stability gives $R_{x+1}\ge0$, so the peeling
recursion yields
$R_x=\mathfrak{m}_x-\nu^{-1}R_{x+1}\le\mathfrak{m}_x$.
Consequently, every segment $\Gamma$ with
$\ee(\Gamma)=x$ and $S_{\mathfrak{m}}(\Gamma)>0$
belongs to $\supp\mathfrak{m}_x$.

If $\mathfrak{m}$ is a set, then $\mathfrak{m}(\Delta)\le1$, a contradiction. Thus, the
antecedent of \eqref{eq:tame} never occurs, proving (a).
If $|\mathfrak{m}_x|\le2$, then
$\mathfrak{m}(\Delta)\ge2$ forces
$\mathfrak{m}_x=\{\Delta,\Delta\}$. Hence, $\Delta$ is
the only segment with end $x$ for which
$S_{\mathfrak{m}}$ can be positive. In
particular, it has the largest beginning among such segments,
as required by \eqref{eq:tame}. This proves (b).
\end{proof}
\begin{example}\label{ex:strict} Let $\mathfrak{m}=\{[0,1],[1,2],[1,2],[1,2],[2,3]\}$. Its nonzero endpoint layers are 
	$$\mathfrak{m}_3=\{[2,3]\},\qquad \mathfrak{m}_2=3\{[1,2]\},\qquad \mathfrak{m}_1=\{[0,1]\}.$$
	 Using the peeling recursion of Remark~\ref{rem:peelingnumbers}, we obtain $R_3=\{[2,3]\}$. So, $R_2=\mathfrak{m}_2-\nu^{-1}R_3=3\{[1,2]\}-\{[1,2]\}=2\{[1,2]\}$, while $R_1=\mathfrak{m}_1-\nu^{-1}R_2=\{[0,1]\}-2\{[0,1]\}=-\{[0,1]\}.$ Thus, the stable integers are precisely those $x\geq 2$. This multisegment is not covered by any of the cases of Corollary~\ref{cor:classes}(a)--(c). Indeed, $\mathfrak{m}$ is not a set, and $|\mathfrak{m}^{1}|= |\mathfrak{m}_2|=3$. Moreover, the two extremal inequalities of Corollary~\ref{cor:firstlevel} are satisfied. At the largest endpoint, \(\nu^{-1}\mathfrak{m}_3=\{[1,2]\}\leq 3\{[1,2]\}=\mathfrak{m}_2.\) On the other hand, $0$ is the smallest beginning occurring in $\mathfrak{m}$, and \(\nu\mathfrak{m}^{0}=\{[1,2]\}\leq 3\{[1,2]\}=\mathfrak{m}^{1}.\) Thus, the failure of distinction cannot be detected by the first-order extremal constraints of Corollary~\ref{cor:firstlevel}. We next verify that $\mathfrak{m}$ is tame. At the stable level $x=2$, one has \(S_{\mathfrak{m}}([1,2])=\mathfrak{m}([1,2])-\mathfrak{m}([2,3])=3-1=2>0,\) and \(S_{\mathfrak{m}}(\nu[1,2])=S_{\mathfrak{m}}([2,3])=1>0.\) Furthermore, $[1,2]$ is the only segment $\Gamma$ with $e(\Gamma)=2$ for which $S_{\mathfrak{m}}(\Gamma)>0$. Consequently, \(b([1,2])=\max\{b(\Gamma):e(\Gamma)=2,\;S_{\mathfrak{m}}(\Gamma)>0\},\) so the tameness condition is satisfied at $x=2$. At the stable level $x=3$, although \(S_{\mathfrak{m}}([2,3])=1,\) we have \(S_{\mathfrak{m}}(\nu[2,3])=S_{\mathfrak{m}}([3,4])=0,\) so the antecedent in the tameness condition does not occur. For $x>3$, all relevant $S_{\mathfrak{m}}$-values vanish. Hence, $\mathfrak{m}$ is tame. On the other hand, \(S_{\mathfrak{m}}([0,1])=\mathfrak{m}([0,1])-\mathfrak{m}([1,2])+\mathfrak{m}([2,3])=1-3+1=-1<0.\) Therefore, $\mathfrak{m}$ is not of Speh type by Theorem~\ref{thm:speh}. Since $\mathfrak{m}$ is tame, Theorem~\ref{thm:main} implies that $\mathfrak{m}$ is not distinguished. Thus, this example lies strictly beyond the cases covered by Corollary~\ref{cor:classes}(a)--(c), satisfies both first-order extremal constraints of Corollary~\ref{cor:firstlevel}, and provides a genuinely non-vacuous application of the tameness condition. \end{example}
\begin{remark}\label{rem:tamecheck}
Tameness depends only on $\mathfrak{m}$ and can be
checked by computing the values of $S_{\mathfrak{m}}$
from the largest endpoint downward, using the recursion
of Remark~\ref{rem:peelingnumbers}. Once a negative
level is reached, no lower integer is stable.
The condition is not vacuous, and four is the minimum
number of segment occurrences in a non-tame multiset.
Indeed, failure of \eqref{eq:tame} at a stable integer
$x$ requires a segment $\Delta$ with
$S_{\mathfrak{m}}(\Delta)>0$ and
$S_{\mathfrak{m}}(\nu\Delta)>0$, together with a
different segment $\Gamma$ of end $x$ such that
$S_{\mathfrak{m}}(\Gamma)>0$ and
$\bb(\Gamma)>\bb(\Delta)$. The recurrence
\eqref{eq:Srec} gives
$\mathfrak{m}(\Delta)\ge2$, while stability gives
$\mathfrak{m}(\Gamma)\ge1$. Thus, at least three
occurrences have end $x$. The positive value
$S_{\mathfrak{m}}(\nu\Delta)$ also requires at
least one occurrence with end greater than $x$,
giving at least four in total. This bound is attained by $\mathfrak{m}=\{[0,1],[0,1],[1,1],[1,2]\}$.
Here,
$$
R_2=\{[1,2]\},
\qquad
\nu^{-1}R_2=\{[0,1]\},
\qquad
R_1=\{[0,1],[1,1]\}.
$$
The integer $1$ is stable, but $[0,1]$ belongs to
$\supp(\nu^{-1}R_2)\cap\supp R_1$ and has smaller
beginning than $[1,1]\in\supp R_1$. Hence, this
multiset is not tame.
\end{remark}

\section{Failure of the endpoint-order strategy}\label{sec:counter}

The proofs of \cite[Propositions 8.7 and 8.8]{MOS} use the canonical
standard order of \cite[\S8.0.12]{MOS}, whose endpoints are
non-increasing. Likewise, for a tame multiset that is not of Speh
type, the order constructed in the proof of Theorem~\ref{thm:main}
has non-increasing endpoints and is a strong witness. We now show that, in general, no standard order with non-increasing endpoints need be a witness. 

\begin{theorem}\label{thm:counter}
Let
$\mathfrak{m}_{0}=\{[0,0],[0,1],[0,1],[1,1],[1,2]\}$.
Then:
\begin{enumerate}[label=(\alph*)]
\item $\mathfrak{m}_{0}$ is not of Speh type;

\item $\mathfrak{m}_{0}$ has exactly nine standard orders. Six of them are
$$
\begin{aligned}
\mathcal{G}_{1}
 &=([1,1],[1,2],[0,1],[0,0],[0,1]),
&\quad
\mathcal{G}_{2}
 &=([1,1],[1,2],[0,1],[0,1],[0,0]),\\
\mathcal{G}_{3}
 &=([1,2],[0,1],[0,1],[1,1],[0,0]),
&
\mathcal{G}_{4}
 &=([1,2],[0,1],[1,1],[0,1],[0,0]),\\
\mathcal{G}_{5}
 &=([1,2],[1,1],[0,1],[0,0],[0,1]),
&
\mathcal{G}_{6}
 &=([1,2],[1,1],[0,1],[0,1],[0,0]).
\end{aligned}
$$
The remaining three are 
$$
\begin{aligned}
\mathcal{W}_{1}&=([1,2],[1,1],[0,0],[0,1],[0,1]),\\
\mathcal{W}_{2}&=([1,1],[1,2],[0,0],[0,1],[0,1]),\\
\mathcal{W}_{3}&=([1,2],[0,1],[1,1],[0,0],[0,1]);
\end{aligned}
$$

\item each of $\mathcal{G}_{1},\ldots,\mathcal{G}_{6}$
admits a non-trivial relevant decomposition, whereas none of $\mathcal{W}_{1}$, $\mathcal{W}_{2}$, $\mathcal{W}_{3}$
admits any relevant decomposition. Consequently,
$\mathfrak{m}_{0}$ is not distinguished, and the three
$\mathcal{W}$-orders are precisely its strong witnesses;

\item the standard orders of $\mathfrak{m}_{0}$ with
non-increasing endpoints are exactly
$\mathcal{G}_{3},\mathcal{G}_{4},\mathcal{G}_{6}$.
Hence, no witness for $\mathfrak{m}_{0}$ has
non-increasing endpoints. On the other hand,
$\mathcal{W}_{1}$ and $\mathcal{W}_{2}$ have
non-increasing beginnings.
\end{enumerate}
Thus, the witness strategy of \cite[\S8.0.10]{MOS},
when restricted to standard orders with non-increasing
endpoints, cannot be applied to all multisets.
\end{theorem}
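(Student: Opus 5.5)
The plan is to treat the four parts in order, using the numerical criterion for (a), a direct enumeration for (b) and (d), and the rigidity lemmas of Section~\ref{sec:rigid} for the negative half of (c). The positive half of (c) is explicit bookkeeping.

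\emph{Part (a).} I will apply Theorem~\ref{thm:speh} with $\Delta=[-1,0]$:
$$S_{\mathfrak m_0}([-1,0])=\mathfrak m_0([-1,0])-\mathfrak m_0([0,1])+\mathfrak m_0([1,2])=0-2+1=-1<0,$$
so $\mathfrak m_0$ is not of Speh type.

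\emph{Part (b).} Write $A=[1,2]$, $B=[1,1]$, $C=[0,0]$, $D=[0,1]$. I will check all pairs against the definition of $\prec$. The only relations are $C\prec B$, $C\prec A$ and $D\prec A$; in particular $D\not\prec B$, since their endpoints agree. Hence an order is standard if and only if $A$ precedes $C$ and both copies of $D$, and $B$ precedes $C$. I will count these orders in two steps.
\begin{itemize}
\item Place $A$ first among $\{A,C,D,D\}$. This gives the three words $ACDD$, $ADCD$, $ADDC$.
\item Insert $B$ somewhere before $C$. This gives $2+3+4=9$ orders.
\end{itemize}
These nine are exactly the listed $\mathcal G_i$ and $\mathcal W_i$. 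Reading off the endpoint sequences then gives (d): only $\mathcal G_3,\mathcal G_4,\mathcal G_6$ have non-increasing endpoints. Likewise, $\mathcal W_1$ and $\mathcal W_2$ have beginnings $1,1,0,0,0$, which are non-increasing.

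\emph{Part (c), positive half.} By Lemma~\ref{lem:trivialrel} and (a), the trivial decomposition is never relevant. So any relevant decomposition is non-trivial, and the only segments that can be cut are $A=([2,2],[1,1])$ and $D=([1,1],[0,0])$. For each $\mathcal G_i$ I will write down an explicit cutting together with a fixed-point-free involution $\tau$, and verify (R1)--(R3) directly, as in Remark~\ref{rem:converse}. The search is guided by Lemma~\ref{lem:firstlast}:
\begin{itemize}
\item the pieces of the first block must be paired with the bottom pieces of later blocks;
\item the pieces of the last block must be paired with the top pieces of earlier blocks;
\item every pairing must satisfy $\Delta_\imath=\nu\Delta_{\tau(\imath)}$ for the lexicographically earlier index.
\end{itemize}
A bounded hand search (at most $8$ cuttings per order, then matchings of at most $8$ pieces) should produce a witness decomposition for each $\mathcal G_i$. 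By Theorem~\ref{thm:duality}, a decomposition found for one order also transfers to its reversed dual order, which may save some work.

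\emph{Part (c), negative half.} This is the main obstacle: showing that $\mathcal W_1,\mathcal W_2,\mathcal W_3$ admit no relevant decomposition at all. I will argue by case analysis on the last block. In each $\mathcal W_j$ the last block is $D$ (or $C$ for $\mathcal W_2$; I will check each order separately).
\begin{itemize}
\item By Lemma~\ref{lem:last}, the top piece $\Delta_{k,1}$ is paired with the top piece of an earlier block whose endpoint is $\ee(\Delta_k)+1$.
\item If $\Delta_k=D$, that earlier block must be $A$, so $\nu\Delta_{k,1}$ is the top piece of $A$. If $D$ is cut, a second piece $[0,0]$ must pair with the top piece $[1,1]$ of a still earlier block. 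That block is $B$, or $D$ cut; the latter is impossible, since $D$ starts before the required position in these orders. Condition (R1) then fixes the relative positions.
\item If $D$ is uncut, then $A$ is uncut and $\tau$ pairs the blocks $A$ and $D$. I then delete both blocks, as in the proof of Theorem~\ref{thm:peel}, leaving a relevant decomposition of an order on $\{C,D,B\}$ in the inherited order.
\end{itemize}
The reduced orders will be small enough to kill by the same kind of argument. For example, in the inherited order for $\mathcal W_1$, the element $C=[0,0]$ precedes $D$, and this contradicts Lemma~\ref{lem:first} or Corollary~\ref{cor:sameend}(a) applied to the first and last blocks. I will also use the rigidity of the first block, Lemma~\ref{lem:first}: if the first block is $A$ or $B$, it forces a later segment with beginning $0$ whose bottom piece is $\nu^{-1}$ of the corresponding piece. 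Combining the first-block and last-block constraints should exhaust each $\mathcal W_j$ in a few lines.

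Finally, each $\mathcal W_j$ is a standard order admitting no relevant decomposition, so it is a strong witness. This gives non-distinction, and together with (b) shows that the $\mathcal W$-orders are precisely the strong witnesses.
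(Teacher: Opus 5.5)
Your overall route is the paper's: (a) via $S_{\mathfrak m_0}([-1,0])=-1$; (b) via the three precedences $C\prec B$, $C\prec A$, $D\prec A$ (your insertion count is a fine variant); (d) by reading off the sequences; and the negative half of (c) via Lemma~\ref{lem:last} on the last block. That lemma forces the partner $A$; you then split on whether the last $D$ is cut, and delete the paired uncut blocks. Your remark that every relevant decomposition here is automatically non-trivial, by (a) and Lemma~\ref{lem:trivialrel}, is correct and useful.

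However, (c) is still only a plan, and its content is exactly the finite verification you defer. For the positive half you must actually exhibit the six decompositions. Five need only one $D$ cut as $([1,1],[0,0])$, but $\mathcal G_3$ does not. There, $A$ and both copies of $D$ must be cut; the decomposition is in fact forced, with $\tau$-orbits $\{(1,1),(4,1)\}$, $\{(1,2),(2,2)\}$, $\{(2,1),(3,2)\}$, $\{(3,1),(5,1)\}$. Your appeal to Theorem~\ref{thm:duality} saves nothing, because reversed dual orders are orders on $\mathfrak m_0\dv$, which is not a translate of $\mathfrak m_0$.

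In the negative half, several specifics are wrong or unfinished.
\begin{itemize}
\item All three $\mathcal W_j$ end in $D=[0,1]$, not $C$.
\item Your reason for excluding a cut $D$ as the partner of $\Delta_{k,2}=[0,0]$ is garbled. The correct reason is that Lemma~\ref{lem:last} gives $i'_2<i'_1$, the position of $A$, while every $D$ follows $A$ because $D\prec A$. Hence the partner must be $B$, which requires $B$ before $A$; among the $\mathcal W_j$ this happens only in $\mathcal W_2$.
\item Your justification for the reduced $\mathcal W_1$ order $([1,1],[0,0],[0,1])$ fails. Corollary~\ref{cor:sameend}(a) does not apply, since these segments share neither an endpoint nor a beginning. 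Lemma~\ref{lem:first} for the first block $[1,1]$ is satisfied by the later segments with beginning $0$. What kills this order is Lemma~\ref{lem:last}: its last block $[0,1]$ needs an earlier block with endpoint $2$, and none remains. The same reduced order arises from the uncut branch of $\mathcal W_2$.
\item For the reduced $\mathcal W_3$ order $([0,1],[1,1],[0,0])$, the first-block constraint you mention settles it at once, more quickly than the paper's case split: Lemma~\ref{lem:first} would require a segment with beginning $-1$.
\item The cut branch of $\mathcal W_2$ is not closed by first- and last-block constraints alone. After $\tau(5,1)=(2,1)$ and $\tau(5,2)=(1,1)$, you must still match $(2,2)=[1,1]$, the block $[0,0]$, and the pieces of block $4$. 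Lemma~\ref{lem:blocks} and (R3) force block $4$ to be cut with $\tau(2,2)=(4,2)$. Then $(3,1)$ and $(4,1)$ must be paired, which contradicts (R3).
\end{itemize}
With these repairs and the explicit decompositions supplied, your argument becomes the paper's proof.
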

\begin{proof}
\emph{(a).}
Among segments of size two, the only nonzero
multiplicities are $\mathfrak{m}_{0}([0,1])=2$ and
$\mathfrak{m}_{0}([1,2])=1$. It follows that
$S_{\mathfrak{m}_{0}}([-1,0])=0-2+1=-1<0$.
By Theorem~\ref{thm:speh}, $\mathfrak{m}_{0}$ is
not of Speh type.

\emph{(b).}
Write $A=[1,2]$, $B=[1,1]$, $C=[0,0]$, and $D=[0,1]$,
where $D$ occurs twice. The only precedences among
these segments are $C\prec B$, $C\prec A$, and $D\prec A$. Thus, a listing is standard if and only if $A$ precedes $C$ and both occurrences of $D$, and $B$ precedes $C$.
The entry $A$ must occupy position $1$ or $2$.
If $A$ is first, there are $4!/2!=12$ listings of
$B,C,D,D$, six of which place $B$ before $C$.
If $A$ is second, $B$ must be first, and the
remaining entries $C,D,D$ have $3!/2!=3$ listings.
This gives $6+3=9$ standard orders, precisely those
in the statement.

\emph{(c): the orders $\mathcal{G}_{i}$.}
For each order in the following table, decompose the
indicated block and leave all other blocks undecomposed.
The third column gives the two-element orbits of the
involution $\tau$. In the last column, the two lines
of each entry form one (R1) inequality for
$\varphi=\pr_{1}\circ\tau$.

$$
\resizebox{\linewidth}{!}{$\displaystyle
\renewcommand{\arraystretch}{1.25}
\begin{array}{llll}
\text{Order} & \hspace{-.1cm} \text{Blocks decomposed} & \hspace{.5cm} \tau & \text{(R1)}\\ \hline
\hspace{.2cm} \mathcal{G}_{1} & \hspace{-.2cm} \Delta_{3}=[0,1]=([1,1],[0,0]) &
 \{(1,1),(3,2)\},\{(2,1),(5,1)\},\{(3,1),(4,1)\} & \varphi(3,1)=4>1=\varphi(3,2)\\
\hspace{.2cm} \mathcal{G}_{2} & \hspace{-.2cm} \Delta_{4}=[0,1]=([1,1],[0,0]) &
 \{(1,1),(4,2)\},\{(2,1),(3,1)\},\{(4,1),(5,1)\} & \varphi(4,1)=5>1=\varphi(4,2)\\
\hspace{.2cm} \mathcal{G}_{4} & \hspace{-.2cm} \Delta_{4}=[0,1]=([1,1],[0,0]) &
 \{(1,1),(2,1)\},\{(3,1),(4,2)\},\{(4,1),(5,1)\} & \varphi(4,1)=5>3=\varphi(4,2)\\
\hspace{.2cm} \mathcal{G}_{5} & \hspace{-.2cm} \Delta_{3}=[0,1]=([1,1],[0,0]) &
 \{(1,1),(5,1)\},\{(2,1),(3,2)\},\{(3,1),(4,1)\} & \varphi(3,1)=4>2=\varphi(3,2)\\
\hspace{.2cm} \mathcal{G}_{6} & \hspace{-.2cm} \Delta_{4}=[0,1]=([1,1],[0,0]) &
 \{(1,1),(3,1)\},\{(2,1),(4,2)\},\{(4,1),(5,1)\} & \varphi(4,1)=5>2=\varphi(4,2)
\end{array}
$}
$$

In every row, $\tau$ is fixed-point-free, giving (R2).
Condition (R3) holds orbit by orbit because the earlier
piece is the $\nu$-translate of its partner; the relevant
identities are $[1,2]=\nu[0,1]$ and
$[1,1]=\nu[0,0]$. The final column verifies the
only instance of (R1). Hence, these five decompositions
are relevant and non-trivial.

For
$\mathcal{G}_{3}=([1,2],[0,1],[0,1],[1,1],[0,0])$,
decompose the first three blocks as
$$
\Delta_{1}=[1,2]=([2,2],[1,1]),\qquad
\Delta_{2}=[0,1]=([1,1],[0,0]),\qquad
\Delta_{3}=[0,1]=([1,1],[0,0]).
$$
Leave blocks $4$ and $5$ undecomposed and take
$$
\begin{aligned}
\tau:\quad
&\{(1,1),(4,1)\},\qquad \{(1,2),(2,2)\},\\
&\{(2,1),(3,2)\},\qquad \{(3,1),(5,1)\}.
\end{aligned}
$$
Each orbit satisfies (R3), while (R1) follows from
$$
\begin{aligned}
\varphi(1,1)=4&>2=\varphi(1,2),\\
\varphi(2,1)=3&>1=\varphi(2,2),\\
\varphi(3,1)=5&>2=\varphi(3,2).
\end{aligned}
$$
This gives the sixth non-trivial relevant decomposition.

\emph{The order $\mathcal{W}_{1}$.}
Suppose that $\mathcal{W}_{1}$ admitted a relevant
decomposition. Its last block is $\Delta_{5}=[0,1]$.
By Lemma~\ref{lem:last}, the partner of its top
piece belongs to a block with end $2$. The only
such block is $\Delta_{1}=[1,2]$, so $i'_{1}=1$.
If $k_{5}\ge2$, the same lemma would give
$i'_{2}<1$, which is impossible. Thus, $k_{5}=1$,
and $\Delta_{1,1}=\nu[0,1]=[1,2]$ forces
$k_{1}=1$ and $\tau(1,1)=(5,1)$.
Delete these paired one-piece blocks. Restricting
$\tau$ and relabelling the remaining blocks in order
preserves (R1)--(R3), and would give a relevant
decomposition of $([1,1],[0,0],[0,1])$. Applied
to its last block $[0,1]$, Lemma~\ref{lem:last}
would require an earlier block with end $2$.
Neither remaining block has that end.

\emph{The order $\mathcal{W}_{3}$.}
The same argument applied to its last block
$\Delta_{5}=[0,1]$ forces
$k_{5}=k_{1}=1$ and $\tau(1,1)=(5,1)$.
Deletion would therefore give a relevant
decomposition of $([0,1],[1,1],[0,0])$.
Its last block $[0,0]$ is undecomposed, and
Lemma~\ref{lem:last} requires an earlier top
piece equal to $\nu[0,0]=[1,1]$.
If that piece is the block $[1,1]$, this block
is paired with $[0,0]$ and all pieces of the
remaining block $[0,1]$ would have to be
matched within their own block, contrary to
Lemma~\ref{lem:blocks}. If it is the top piece
of $[0,1]$, then
$[0,1]=([1,1],[0,0])$. Its bottom piece must
be paired with the block $[1,1]$, but (R3)
would require $[0,0]=\nu[1,1]=[2,2]$, a contradiction.

\emph{The order $\mathcal{W}_{2}$.}
Here, the last block is again $\Delta_{5}=[0,1]$,
but the only block with end $2$ is
$\Delta_{2}=[1,2]$. Lemma~\ref{lem:last}
therefore gives $i'_{1}=2$ and
$\Delta_{2,1}=\nu\Delta_{5,1}$.

If $k_{5}=1$, then
$\Delta_{2,1}=\nu[0,1]=[1,2]=\Delta_{2}$.
Hence, $k_{2}=1$, the two blocks are paired,
and their deletion would give a relevant
decomposition of $([1,1],[0,0],[0,1])$.
This was ruled out in the case of $\mathcal{W}_{1}$.
If $k_{5}=2$, then
$\Delta_{5}=([1,1],[0,0])$ and
$\Delta_{2}=[1,2]=([2,2],[1,1])$, with
$\tau(2,1)=(5,1)$. Lemma~\ref{lem:last}
further gives $i'_{2}<2$, so $i'_{2}=1$
and $\tau(1,1)=(5,2)$. The unmatched
indices are $(2,2)$, block $3=[0,0]$,
and the pieces of block $4=[0,1]$.
Lemma~\ref{lem:blocks} forces $(2,2)$
to be paired with block $3$ or block $4$.
If it is paired with block $3$, all pieces
of block $4$ would have to be matched
within block $4$, contradicting
Lemma~\ref{lem:blocks}. If it is paired
with block $4$, (R3) requires its partner
to be $[0,0]$. Thus,
$\Delta_{4}=([1,1],[0,0])$ and
$\tau(2,2)=(4,2)$. The remaining indices
$(3,1)$ and $(4,1)$ must be paired, but
(R3) would then give $[0,0]=\nu[1,1]=[2,2]$.
Thus, none of the three $\mathcal{W}$-orders
admits a relevant decomposition.

\emph{(d).}
An order with non-increasing endpoints must
begin with $[1,2]$ and end with $[0,0]$.
The three middle entries are an arbitrary
listing of $[0,1],[0,1],[1,1]$, giving
exactly $\mathcal{G}_{3},\mathcal{G}_{4},
\mathcal{G}_{6}$. Each admits a relevant
decomposition by (c), so none is a witness.
Finally, the beginnings in both
$\mathcal{W}_{1}$ and $\mathcal{W}_{2}$
are $1,1,0,0,0$.
\end{proof}
\begin{remark}\label{rem:countermeaning}
	Although $\mathfrak m_0$ is not tame, its dual is tame.
	Hence, its non-distinguishedness is already implied by Theorem~\ref{thm:main}
	together with Theorem~\ref{thm:duality}. The role of Theorem~\ref{thm:counter} is therefore not to produce a new obstruction to the hypotheses, but to show that the
	endpoint-order strategy of \cite[\S8.0.10]{MOS} cannot work in full
	generality.
\end{remark}

\begin{remark}\label{rem:minimality}
The five-segment multiset in Theorem~\ref{thm:counter}
has a simple relation to the smallest non-tame example
of Remark~\ref{rem:tamecheck}. Namely, if
$\mathfrak{n}=\{[0,1],[0,1],[1,1],[1,2]\}$, then
$\mathfrak{m}_{0}=\mathfrak{n}+\{[0,0]\}$.
Thus, adjoining a single segment to a four-segment
non-tame multiset yields the endpoint-order
obstruction of Theorem~\ref{thm:counter}.
\end{remark}

\begin{remark}
Let $\mathcal{C}(\mathfrak{m})$ be the canonical
standard order of \cite[\S8.0.12]{MOS}, and put
$\mathcal{C}^{*}(\mathfrak{m})
=(\mathcal{C}(\mathfrak{m}\dv))\dv$.
Theorem~\ref{thm:duality} shows that both are
standard orders of $\mathfrak{m}$ and that
dualizing preserves the existence and triviality
of relevant decompositions.

Theorem~\ref{thm:counter} illustrates the reason
to consider the second order: the strong witnesses
$\mathcal{W}_{1}$ and $\mathcal{W}_{2}$ have
non-increasing beginnings, and hence their dual
orders have non-increasing endpoints. This leads
to the following question.
\end{remark}

\begin{question}\label{q:twoorders}
Is it true that, for every $\mathfrak{m}\in\Oz$,
at least one of $\mathcal{C}(\mathfrak{m})$ and
$\mathcal{C}^{*}(\mathfrak{m})$ admits no
non-trivial relevant decomposition?
\end{question}

An affirmative answer would imply
Hypothesis~\ref{hyp:85}. Indeed, if
$\mathfrak{m}$ is distinguished, whichever
order is specified by the question admits a
relevant decomposition. As it admits no
non-trivial one, the decomposition is trivial;
Lemma~\ref{lem:trivialrel} then implies that
$\mathfrak{m}$ is of Speh type.
Hypothesis~\ref{hyp:86} would follow from
Corollary~\ref{cor:85iff86}.

\section{Representation-theoretic consequences and open problems}\label{sec:app}

\subsection{Relation with symplectic periods}

Let $\mathrm{F}$ be a non-archimedean local field, and let $\rho$ be
an irreducible supercuspidal representation of
$\mathrm{GL}_{d}(\mathrm{F})$. Denote by $\mathcal{O}_{\rho}$
the set of multisets of segments in the line
$\rho^{\Z}$. The $\rho$-labelling
$[a,b]\mapsto[a,b]_{(\rho)}$ identifies $\Oz$ with
$\mathcal{O}_{\rho}$.

The proof of \cite[Proposition 9.1]{MOS} shows that,
if the standard module $\lambda(\mathfrak{M})$
admits a symplectic period, then the unlabelling of
$\mathfrak{M}$ is distinguished in the sense of
Definition~\ref{def:distspeh}. Our combinatorial
results therefore give necessary conditions for
such a period.

\begin{corollary}\label{cor:rep}
Let $0\ne\mathfrak{M}\in\mathcal{O}_{\rho}$, and
suppose that the standard module
$\lambda(\mathfrak{M})$ admits a symplectic period.
Let $\mathfrak{m}\in\Oz$ be the unlabelling of
$\mathfrak{M}$, let $c$ be its largest endpoint,
and let $a$ be its smallest beginning. Then $\nu^{-1}\mathfrak{m}_{c}\le\mathfrak{m}_{c-1}$ and $\nu\mathfrak{m}^{a}\le\mathfrak{m}^{a+1}$.  Moreover, 
if $\mathfrak{m}$ or
$\mathfrak{m}\dv$ is tame, then
$\mathfrak{m}=\mathfrak{n}+\nu\mathfrak{n}$
for some $\mathfrak{n}\in\Oz$; that is,
$\mathfrak{m}$ is of Speh type. This conclusion
applies, in particular, when $\mathfrak{m}$ is
a set, when at most two of its segments share
any given endpoint, or when at most two share
any given beginning, with occurrences counted
with multiplicity.
\end{corollary}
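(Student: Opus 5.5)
This corollary assembles earlier results, so the plan is to chain them rather than prove anything new. First, by the proof of \cite[Proposition~9.1]{MOS} recalled just above, the existence of a symplectic period on $\lambda(\mathfrak{M})$ implies that the unlabelling $\mathfrak{m}$ is distinguished in the sense of Definition~\ref{def:distspeh}. I will use this as a black box. The one point to check is that the $\rho$-labelling $[a,b]\mapsto[a,b]_{(\rho)}$ is compatible with $\nu$, $\vee$ and endpoints. This holds because the labelling is a bijection $\Oz\to\mathcal{O}_\rho$ that only relabels the integer line, so all the combinatorial notions are transported unchanged.

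With $\mathfrak{m}\neq 0$ distinguished, Corollary~\ref{cor:firstlevel} gives the two extremal inequalities directly:
\[
\nu^{-1}\mathfrak{m}_{c}\le\mathfrak{m}_{c-1},
\qquad
\nu\mathfrak{m}^{a}\le\mathfrak{m}^{a+1}.
\]
The first comes from Theorem~\ref{thm:peel} applied to the order \eqref{eq:peelorder}. The second comes from applying the first to $\mathfrak{m}\dv$, which is distinguished by Theorem~\ref{thm:duality}.

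For the second assertion, I will invoke Theorem~\ref{thm:main}. If $\mathfrak{m}$ is tame and distinguished, it is of Speh type. If instead $\mathfrak{m}\dv$ is tame, then $\mathfrak{m}\dv$ is distinguished by Theorem~\ref{thm:duality}, hence of Speh type, and Lemma~\ref{lem:spehdual} transfers this back to $\mathfrak{m}$. Either way we obtain $\mathfrak{m}=\mathfrak{n}+\nu\mathfrak{n}$ with $\mathfrak{n}\in\Oz$; Theorem~\ref{thm:speh} identifies $\mathfrak{n}(\Delta)=S_{\mathfrak m}(\nu\Delta)$ if an explicit form is wanted.

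The special cases are exactly Corollary~\ref{cor:classes}. Sets and multisegments with $|\mathfrak{m}_x|\le 2$ for all $x$ are tame. The condition $|\mathfrak{m}^x|\le2$ for all $x$ makes $\mathfrak{m}\dv$ tame, since $|\mathfrak{m}^x|=|(\mathfrak{m}\dv)_{-x}|$.

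There is no real obstacle here. The only step needing care is the representation-theoretic input, namely that a symplectic period forces distinction of the unlabelled multisegment for \emph{every} standard order. This must be quoted from \cite{MOS} as stated in the preceding paragraph, and it is not reproved here.
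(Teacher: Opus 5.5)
Your proposal is correct and follows the paper's proof essentially step for step. You obtain distinction of $\mathfrak{m}$ from the Mitra--Offen--Sayag input, the two inequalities from Corollary~\ref{cor:firstlevel}, Speh type from Theorem~\ref{thm:main} (you unpack its dual case via Theorem~\ref{thm:duality} and Lemma~\ref{lem:spehdual}), and the special cases from Corollary~\ref{cor:classes}; your check of compatibility with the $\rho$-labelling is harmless but unnecessary, since every assertion is already stated for the unlabelled $\mathfrak{m}$.
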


\begin{proof}
As recalled above, the existence of a symplectic
period on $\lambda(\mathfrak{M})$ implies that
$\mathfrak{m}$ is distinguished. The two
inequalities follow from
Corollary~\ref{cor:firstlevel}. If
$\mathfrak{m}$ or $\mathfrak{m}\dv$ is tame,
Theorem~\ref{thm:main} implies that
$\mathfrak{m}$ is of Speh type. The three
particular cases follow from
Corollary~\ref{cor:classes}.
\end{proof}

The inequalities give unconditional numerical
constraints on the unlabelled multiset:
every occurrence in its top endpoint layer
requires a matching occurrence one step below,
and the analogous condition holds at the
smallest beginning. The Speh-type conclusion
extends the range of the corresponding result
of \cite[Corollary 9.2]{MOS}.

\subsection{Beyond the tame case}

Our results do not settle Hypothesis~\ref{hyp:85} for
arbitrary multisets, particularly when neither
$\mathfrak{m}$ nor $\mathfrak{m}\dv$ is tame.
Corollary~\ref{cor:85iff86} shows that passing to
Hypothesis~\ref{hyp:86} does not change the
combinatorial problem. Theorem~\ref{thm:counter}
also shows that a proof based on finding a witness
cannot restrict its search to standard orders with
non-increasing endpoints. Two possible approaches
are the following.

\begin{enumerate}[label=(\roman*),leftmargin=2.1em]
\item Answer Question~\ref{q:twoorders} affirmatively.
Theorem~\ref{thm:duality} makes the canonical order
of $\mathfrak{m}$ and the reversed dual of the canonical order
of $\mathfrak{m}\dv$ a natural pair. A difficulty
is to control relevant decompositions in both orders
within a single peeling argument.

\item Establish the following reduction step:
if $\mathfrak{m}$ is distinguished, then
$\mathfrak{m}' =\mathfrak{m}-\mathfrak{m}_{c} -\nu^{-1}\mathfrak{m}_{c}$
is distinguished, where $c$ is the largest
endpoint of $\mathfrak{m}$. By
Corollary~\ref{cor:firstlevel}, $\mathfrak{m}'$
is a multiset, and $|\mathfrak{m}'|<|\mathfrak{m}|$.
The reduction step would therefore prove
Hypothesis~\ref{hyp:85} by induction: if
$\mathfrak{m}'=\mathfrak{n}+\nu\mathfrak{n}$,
then $\mathfrak{m}
=(\mathfrak{n}+\nu^{-1}\mathfrak{m}_{c})
 +\nu(\mathfrak{n}+\nu^{-1}\mathfrak{m}_{c})$,
so $\mathfrak{m}$ is also of Speh type.
Every standard order of $\mathfrak{m}'$ can be
extended to a standard order of $\mathfrak{m}$
by placing the segments of $\mathfrak{m}_{c}$
and then those of $\nu^{-1}\mathfrak{m}_{c}$
before it. The obstruction is different:
a relevant decomposition of the extended order
need not pair the added segments with one another.
Without such pairings, its involution cannot
simply be restricted to the order of
$\mathfrak{m}'$.
\end{enumerate}

Corollary~\ref{cor:sameend} provides additional
necessary local conditions. For each segment
of maximal end, a distinguished multiset
contains a segment whose beginning is one
smaller; for each segment of minimal beginning,
it contains a segment whose end is one larger.
These conditions alone are not sufficient:
$\mathfrak{m}_{0}$ satisfies both, but is not
distinguished by Theorem~\ref{thm:counter}.
Understanding how such local constraints behave
under the proposed reduction may help clarify
the remaining obstruction.

\subsection{Converse questions}\label{ss:converses}

It is useful to distinguish three converse statements.

\emph{(1) The combinatorial converse.}
Every multiset of Speh type is distinguished by Lemma~\ref{lem:trivialrel}. Thus, the content of Hypothesis~\ref{hyp:85} is the reverse implication. By Corollary~\ref{cor:85iff86}, Hypothesis~\ref{hyp:86} has the same content. The resulting characterization,
``distinguished if and only if of Speh type,'' holds, in particular, when the multiset or its dual is tame, by Theorem~\ref{thm:main}.

\emph{(2) The order-level assertion.}
Speh type does not imply that the trivial decomposition is the \emph{only} relevant decomposition of every standard order; Remark~\ref{rem:converse} gives a counterexample. Accordingly, the witness strategy of \cite[\S8.0.10]{MOS} seeks, for a multiset that is not of Speh type, a single standard order admitting no relevant decomposition. Theorem~\ref{thm:counter} shows that such an order cannot always be chosen with non-increasing endpoints.

\emph{(3) The representation-theoretic converse.}
The implication established in \cite[\S9]{MOS} runs from a symplectic period of the standard module $\lambda(\mathfrak{M})$ to distinguishedness of the unlabelling of $\mathfrak{M}$. Distinguishedness is a necessary combinatorial condition; the arguments here do not establish that it is sufficient for $\lambda(\mathfrak{M})$ to admit a symplectic period. This converse should also be distinguished from the corresponding question for the irreducible quotient of $\lambda(\mathfrak{M})$. Corollary~\ref{cor:rep} uses only the established necessary implication.

\subsection*{Acknowledgements}
The author thanks Mohan Verma (VIPS-TC, New Delhi) for many helpful discussions and for his constant encouragement during the preparation of this article. The author acknowledges the Indian Institute of Technology Bombay for supporting this research through the Institute Postdoctoral Fellowship.

\end{document}